\newcommand{\R}{\mathbb{R}}
\newcommand{\Q}{\mathbb{Q}}
\newcommand{\N}{\mathbb{N}}
\newcommand{\Z}{\mathbb{Z}}
\newcommand{\eps}{\varepsilon}
\newcommand{\Eps}{\mathcal{E}}
\newcommand{\V}{\mathcal{V}}
\newcommand{\X}{\mathcal{X}}
\newcommand{\Y}{\mathcal{Y}}
\newcommand{\Zz}{\mathcal{Y}}
\renewcommand{\d}{\,\mathrm d}
\DeclareMathOperator{\lcm}{lcm}
\DeclareMathOperator{\meas}{meas}
\DeclareMathOperator{\sgn}{sgn}
\DeclareMathOperator{\odd}{odd}
\newtheorem{thm}{Theorem}[section]
\newtheorem{cor}[thm]{Corollary}
\newtheorem{prop}[thm]{Proposition}
\newtheorem{lem}[thm]{Lemma}
\newtheorem{defin}[thm]{Definition}
\theoremstyle{definition}
\newtheorem{remark}[thm]{Remark}
\newtheorem*{acknowledgements}{Acknowledgements}
\numberwithin{equation}{section}
\begin{document}
\title{Greedy approximations by signed harmonic sums and the Thue--Morse sequence}

\author[S.~Bettin]{Sandro Bettin}
\address[S.~Bettin]{Dipartimento di Matematica\\
  Universit\`{a} di Genova\\
  Via Dodecaneso 35\\
  16146 Genova\\
  Italy}
\email{bettin@dima.unige.it}

\author[G.~Molteni]{Giuseppe Molteni}
\address[G.~Molteni]{Dipartimento di Matematica\\
  Universit\`{a} di Milano\\
  Via Saldini 50\\
  20133 Milano\\
  Italy}
\email{giuseppe.molteni1@unimi.it}

\author[C.~Sanna]{Carlo Sanna}
\address[C.~Sanna]{Dipartimento di Matematica\\
  Universit\`{a} di Genova\\
  Via Dodecaneso 35\\
  16146 Genova\\
  Italy}
\email{carlo.sanna.dev@gmail.com}

\keywords{diophantine approximation; Egyptian fractions; greedy algorithms}
\subjclass[2010]{Primary 11J25, Secondary 11B99}
\date{\today}

\begin{abstract}
Given a real number $\tau$, we study the approximation of $\tau$ by signed harmonic sums $\sigma_N(\tau)
:= \sum_{n \leq N}{s_n(\tau)}/n$, where the sequence of signs $(s_N(\tau))_{N \in\N}$ is defined
``greedily'' by setting $s_{N+1}(\tau) := +1$ if $\sigma_N(\tau) \leq \tau$, and $s_{N+1}(\tau) := -1$
otherwise. More precisely, we compute the limit points and the decay rate of the sequence
$(\sigma_N(\tau)-\tau)_{N \in \N}$. Moreover, we give an accurate description of the behavior of the
sequence of signs $(s_N(\tau))_{N\in\N}$, highlighting a surprising connection with the Thue--Morse
sequence.
\end{abstract}

\maketitle

\section{Introduction}
Riemann's rearrangement theorem~\cite[\S1.1]{MR1108619} asserts that any conditionally convergent series can be rearranged to
converge to any given  $\tau\in\R\cup\{\pm\infty\}$. The classical proof of this result is constructive:
assuming $\tau\in\R$, one first sums the positive values until they exceed $\tau$, then one sums the first
few negatives values until going below $\tau$, and so forth. Much in the same way, one can show that
for all $\tau\in\R$ there exist sequences $(s_n)_{n\in\N}$ with $s_n\in\{\pm 1\}$ such that
\begin{equation*}
\sum_{n=1}^\infty\frac{s_n}{n} = \tau.
\end{equation*}
A natural question is then to determine whether one can find sequences $(s_n)_{n\in\N}$ such that the
partial sums $\sigma_{n}:=\sum_{m=1}^n\frac{s_m}{m}$ converge to $\tau$ particularly quickly. The rate of
convergence of $\sigma_n$ to $\tau$, however, has a clear limitation, since
$|\sigma_n-\sigma_{n-1}|=\frac{1}{n}$ for all $n\in\N$, and so $|\sigma_{n-1}-\tau|\geq \frac{1}{2n}$ for
infinitely many integers $n$. One can then modify the problem along two different routes. The first
approach is to allow the sequence to change and study the  asymptotic behavior of
\begin{equation*}
\mathfrak{m}_N(\tau):= \min\!\left\{\left|\sigma_N-\tau\right| \colon s_1,\dots,s_N\in\{\pm 1\}\right\}.
\end{equation*}
This approach was considered by the authors~\cite{BettinMolteniSanna1}, where it is shown that
\begin{equation}\label{eq:mN_ineq}
\mathfrak{m}_{N}(\tau) < K_{\tau,\eps} \exp\!\left(-\frac1{\log 4-\eps} (\log N)^2\right)
\end{equation}
for every $\eps>0$ and some constant $K_{\tau,\eps}>0$.
This inequality was obtained by interpreting the problem probabilistically, studying the rate of convergence
in distribution of the random variable $\sum_{n=1}^\infty\frac{Z_n}{n}$, where the $Z_n$ are independent
uniformly distributed random variables in $\{-1,+1\}$.

The sequence of signs realizing the minimums $\mathfrak{m}_N(\tau)$ and $\mathfrak{m}_{N+1}(\tau)$
are not related, in general. In particular, the first one is not a subsequence of the second one, and
a universal sequence $(s_n)_{n\in\N}$ giving the best approximation does not exist. Rather than taking the minimum over
all possible choices for the $(s_n)_{n\in\N}$, the second approach is then to fix a specific sequence
$(s_n)_{n\in\N}$ and to determine whether $|\sigma_{n}-\tau|$ can go to zero extremely quickly with $n$ running along
a subsequence $(n_k)_{k\in\N}$. A natural choice for a sequence $\big(s_n(\tau)\big)_{n\in\N}$ with
\begin{equation*}
\sigma_{n}(\tau):=\sum_{m=1}^n\frac{s_m(\tau)}{m}
\end{equation*}
(and $\sigma_0(\tau):=0$) converging to $\tau$ is the following:
\begin{equation}\label{eq:def_sn}
s_n(\tau):=\begin{cases}
1 , & \text{if }\tau  \geq \sigma_{n-1}(\tau) ;\\
-1 , & \text{if }\tau< \sigma_{n-1}(\tau) .
\end{cases}
\end{equation}
In other words, analogously to the proof of Riemann's rearrangement theorem, one chooses $s_n(\tau)=-1$
if the partial sum $\sigma_{n-1}(\tau)$ exceeds $\tau$, and $s_n(\tau)=1$ otherwise (see
Section~\ref{sec:2} below for other possible choices when $\sigma_{n-1}(\tau)=\tau$). We call this a
\emph{greedy approximation to $\tau$ by signed harmonic sums} since at every step, given
$\sigma_{n-1}(\tau)$, the value of $s_n(\tau)$ is chosen so that the distance of $\sigma_{n}(\tau)$ from
$\tau$ is minimized.

In this paper we consider the second approach, analyzing the decay of $\sigma_n(\tau)-\tau$.
Thus, we  stress that from now on, with $s_n(\tau)$ we mean the deterministic sequence defined
by~\eqref{eq:def_sn}. Surprisingly, the behavior of the sequence $s_n(\tau)$ is not chaotic, but is
extremely structured and allows one to prove precise results on the asymptotic behavior of
$\sigma_n(\tau)-\tau$.

Our first result determines the set $S_k'(\tau)$ of limit points of the sequence
\begin{equation*}
S_k(\tau):=\big((\sigma_n(\tau)-\tau)\cdot n^k\big)_{n\in\N}
\end{equation*}
for all integers $k\in\N$ (the case $k=0$ being trivial, since the sequence tends to $0$). Surprisingly,
only a few limit points can arise.

\begin{thm}\label{thm:main_theorem}
There exist sets $X_1,X_2,\dots$ which are non-empty, pairwise disjoint, countable and with
$X_h\cap\overline \Q=\emptyset$ for all $h$, such that for all integers $k\in\Z_{\geq 0}$ and all
$\tau\in\R$ one has
\begin{equation}\label{eq:main_theorem}
S_{k+1}'(\tau)
=
\begin{cases}
 \{0,\pm c_0\}              , & \textup{if $k=0$ and $\tau \notin X_{1}$};       \\
 \{\pm c_0/ 2\}             , & \textup{if $k=0$ and $\tau \in X_{1}$};          \\
 \{0,\pm c_k,\pm \infty\}   , & \textup{if $k\geq 1$ and $\tau \notin Y_{k+1}$}; \\
 \{\pm c_k / 2,\pm \infty\} , & \textup{if $k\geq 1$ and $\tau \in X_{k+1}=Y_{k+1}\setminus Y_k$};\\
 \{\pm \infty\}             , & \textup{if $k\geq 1$ and $\tau \in Y_k$};
\end{cases}
\end{equation}
where $Y_k:=\cup_{h\leq k}X_h$ and $c_k:= 2^{\binom{k}{2}}k!$.
\end{thm}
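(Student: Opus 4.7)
The plan is to analyze the one-dimensional dynamical system $\eps_{n+1}=\eps_n-\sgn(\eps_n)/(n+1)$ with $\eps_n:=\sigma_n(\tau)-\tau$, and to describe the limit points of $n^{k+1}\eps_n$ by induction on $k$. The structural dichotomy driving everything is that when $|\eps_n|<1/(n+1)$ the next sign $s_{n+1}$ flips relative to $s_n$, while otherwise no flip occurs and $|\eps|$ decreases by $1/(n+1)$. The analysis thus reduces to controlling the lengths of runs between consecutive flips at successive scales $n^k$.

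For the base case $k=0$, the first step is to verify that if $(s_n(\tau))$ is eventually purely alternating then a short telescoping computation gives $n\eps_n\to \pm 1/2$. Correspondingly I would define $X_1$ as the set of $\tau$ producing eventually alternating greedy signs, a condition equivalent to one of countably many identities
\begin{equation*}
\tau = \sigma_N(\tau) + \eta \sum_{k \geq 1} \frac{(-1)^{k+1}}{N+k}, \qquad N\in\N,\ \eta\in\{\pm 1\};
\end{equation*}
the tail equals $\pm\log 2+q_N$ for an explicit rational $q_N$, so $X_1$ is non-empty (e.g.\ $\tau=\log 2$), countable, and consists entirely of transcendental numbers by Lindemann's theorem. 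For $\tau\notin X_1$ the sign pattern contains infinitely many \emph{double signs}; at each such index $|\eps_n|$ briefly exceeds $1/(n+1)$, which forces $\delta_n:=n\eps_n$ to reach values close to $\pm 1$, and a local analysis shows that within a bounded number of further iterations $\delta$ also passes near $0$ and $\mp 1$. Together with the upper bound $|\eps_n|\leq 1/n+o(1/n)$ (a consequence of the greedy rule), this produces exactly the three limit points $\{0,\pm 1\}$.

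For general $k$ I would proceed inductively. Assuming the theorem for smaller values, the behavior of $\eps_n$ for $\tau\in X_k\setminus Y_{k-1}$ should be governed by a further-renormalized recursion whose ``alternating pattern'' at scale $n^k$ is an explicit sign sequence with nested block structure related to the Thue--Morse word. Expanding $(n+1)^{k+1}/n^{k+1}$ in $1/n$ yields a factor of $k+1$ when passing from scale $k$ to $k+1$, and the doubling of the effective period at each scale contributes another factor of $2^k$, giving the recursion $c_{k+1}=2^k(k+1)c_k$, equivalent to $c_k=2^{\binom{k}{2}}k!$. The set $X_{k+1}$ is then defined as those $\tau\notin Y_k$ for which the level-$k$ renormalized dynamics is eventually trapped in this pattern; countability follows from the combinatorial rigidity of the admissible tails, while non-emptiness follows by starting from any such tail and running the greedy recursion backwards to reconstruct $\tau$.

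The main obstacle is the claim $X_h\cap\overline\Q=\emptyset$. Membership in $X_h$ forces $\tau$ to equal an explicit rational linear combination of $1$, $\log 2$, and iterated series $\sum_{n>N}\xi_n^{(h)}/n$ whose sign patterns $\xi_n^{(h)}$ are determined by the level-$h$ hierarchy. Ruling out that any such combination is algebraic requires, beyond Lindemann's theorem, a transcendence statement for these structured series. A plausible route is to couple the rigidity of the sign patterns with the near-exponential convergence rate~\eqref{eq:mN_ineq}: any algebraic $\tau$ of bounded height cannot be approximated by rationals with denominator $\lcm(1,\dots,N)\approx e^N$ to within $e^{-c(\log N)^2}$ unless the sign sequence degenerates in a way incompatible with the assumed nested block structure. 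I expect this transcendence step to be the technical core of the paper; once established, mutual disjointness of the $X_h$ is automatic from the uniqueness of the level at which the dynamics first locks in.
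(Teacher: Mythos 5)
Your outline reproduces the correct heuristic picture (renormalization at scales $n^k$, nested Thue--Morse-type blocks, the recursion $c_{k+1}=2^k(k+1)c_k$, and the definition of $X_k$ via eventually periodic sign patterns), but two steps that you treat as plausible or automatic are in fact the substance of the proof, and one of them is proposed along a route that cannot work. First, the block rigidity: everything hinges on the statement that once the greedy sequence exhibits a block $\pm B_k$ at a sufficiently late position, the \emph{entire} remaining tail is a concatenation of blocks $\pm B_k$ (and that for $\tau\notin Y_\infty$ the block order tends to infinity). This is Theorem~\ref{thm:blocks} and Corollary~\ref{cor:A4n} in the paper, and it is not a soft consequence of ``controlling run lengths between flips''; it requires precise positivity, monotonicity and comparison inequalities for the kernels $g_k(x)=\sum_{0\leq\ell<2^k}\eps_\ell/(x+\ell)$ (Lemmas~\ref{lem:g_signs}--\ref{lem:A8n}, Corollary~\ref{cor:A3n}), which guarantee that under the hypothesis $|\tau-\sigma_{n-1}|<g_k(n)$ the next $2^{k+1}$ greedy signs are forced to be $\pm B_{k+1}$ up to sign. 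Without this rigidity you cannot rule out limit points of $n\,(\sigma_n-\tau)$ other than $\{0,\pm c_0\}$ even in your base case (the a priori bound only gives $|n(\sigma_n-\tau)|\leq 2+o(1)$), nor justify the inductive ``locks in'' definition of $X_{k+1}$, nor the exact value $\pm c_k/2$ for $\tau\in X_{k+1}$ when $k\geq1$ (which in the paper comes from an integral comparison of $\sum_h g_{k+1}(N+h2^{k+1})$). Relatedly, non-emptiness of $X_k$ is not obtained by ``running the greedy recursion backwards'': if you set $\tau:=\sum_{n\geq1}f_k(n+m)/n$ you must still \emph{verify} that the greedy algorithm applied to this $\tau$ reproduces the prescribed periodic tail, which is the content of Lemma~\ref{lem:UX} and Corollary~\ref{cor:u} and requires a genuine positivity argument.

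Second, the transcendence step as you propose it fails quantitatively. The greedy approximations (or the optimal ones in~\eqref{eq:mN_ineq}) have quality only $e^{-c(\log N)^2}$ with denominators dividing $\lcm(1,\dots,N)\approx e^{N}$, whereas a Liouville-type lower bound for an algebraic $\tau$ of degree $d$ at such a rational is of size roughly $e^{-dN}$, which is astronomically smaller; hence no contradiction with algebraicity can be extracted from the convergence rate, no matter how rigid the sign pattern is. For $h=1$ your Lindemann argument is fine, but already for $h=2,3$ the tails involve $\pi$ and logarithms such as $\log(2-\sqrt2)$, and the paper's actual route is unavoidable in spirit: one evaluates $U_{k,m}$ explicitly as a nonzero linear form with algebraic coefficients in logarithms of cyclotomic units (Proposition~\ref{prop:A2n}), proves the $\Q$-linear independence of those logarithms using the multiplicative independence of the cyclotomic units $w_a$ (Dirichlet's unit theorem plus the finite-index property), and then applies Baker's theorem (Proposition~\ref{prop:A3n}). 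So the transcendence claim $X_h\cap\overline\Q=\emptyset$, which you correctly flag as the core difficulty for $h\geq2$, is left unproved by your proposal, and the specific Diophantine route you suggest cannot close it.
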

Let $Y_{\infty}:=\cup_{h=1}^\infty X_h$. Notice that $Y_{\infty}$ is countable, and thus in particular has
Lebesgue measure zero. From Theorem~\ref{thm:main_theorem} one immediately deduces the following
corollary.
\begin{cor}\label{cor:main_thm_special_case}
For all integers $k\geq 0$ and all $\tau\in\R$ one has $S_{k+1}'(\tau)\subseteq\{0,\pm c_k,\pm c_k/2,\pm
\infty\}$. Moreover, if $\tau\notin Y_{\infty}$ one has $S_{k+1}'(\tau)=\{0,\pm c_k,\pm \infty\}$ for all
$k\geq 1$.
\end{cor}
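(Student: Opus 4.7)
The plan is to derive both assertions directly from the case analysis in Theorem~\ref{thm:main_theorem}, so no new machinery is required beyond bookkeeping on which cases can occur for a given $\tau$ and $k$.

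For the first assertion, I would fix $k\geq 0$ and $\tau\in\R$ and simply run through the five cases on the right-hand side of \eqref{eq:main_theorem}. Each of them produces a set contained in $\{0,\pm c_k,\pm c_k/2,\pm\infty\}$: the $k=0$ cases give $\{0,\pm c_0\}$ or $\{\pm c_0/2\}$, and for $k\geq 1$ the three possibilities $\{0,\pm c_k,\pm\infty\}$, $\{\pm c_k/2,\pm\infty\}$, $\{\pm\infty\}$ are each visibly included in the claimed superset. Since Theorem~\ref{thm:main_theorem} asserts that $S_{k+1}'(\tau)$ equals one of these five sets, the inclusion follows.

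For the second assertion, suppose $\tau\notin Y_\infty=\bigcup_{h\geq 1}X_h$. Then in particular $\tau\notin X_h$ for every $h$, and consequently $\tau\notin Y_k$ and $\tau\notin Y_{k+1}=Y_k\cup X_{k+1}$ for all $k\geq 1$. Thus, for any $k\geq 1$, we are in the third case of \eqref{eq:main_theorem}, which gives $S_{k+1}'(\tau)=\{0,\pm c_k,\pm\infty\}$, as claimed.

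There is essentially no obstacle here: the corollary is a direct consequence of the partition of $\R$ induced by the sets $\{Y_k\}$ in Theorem~\ref{thm:main_theorem}, and the observation that the complement $\R\setminus Y_\infty$ lies in the ``generic'' case where neither $Y_k$ nor $Y_{k+1}$ contains $\tau$. The only tiny point worth checking explicitly is that the $k=0$ case is covered by the first assertion (since $c_0=2^{\binom{0}{2}}\cdot 0!=1$, so $\{0,\pm c_0\}\subseteq\{0,\pm c_0,\pm c_0/2,\pm\infty\}$) but is deliberately excluded from the second assertion, where $S_1'(\tau)$ never contains $\pm\infty$.
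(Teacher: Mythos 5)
Your proposal is correct and coincides with the paper's approach: the paper states that the corollary is deduced immediately from Theorem~\ref{thm:main_theorem}, exactly by reading off the five cases of \eqref{eq:main_theorem} and noting that $\tau\notin Y_\infty$ forces the case $\tau\notin Y_{k+1}$ for every $k\geq 1$. Your added remark on the $k=0$ case (and $c_0=1$) is a harmless sanity check and does not change the argument.
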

One can also obtain sharp explicit bounds for $\sigma_{n}(\tau) - \tau$.
\begin{cor}\label{cor:inequalities}
Let $k\geq 0$ and let $\tau\notin Y_k$. Then there are infinitely many $n$ such
that
\begin{equation}\label{eq:ineq1}
0 < (\sigma_{n}(\tau) - \tau)n^{k+1} < c_k,\\
\end{equation}
and infinitely many $n$ such that
\begin{equation}\label{eq:ineq2}
0 < (\tau - \sigma_{n}(\tau))n^{k+1} < c_k .
\end{equation}
In particular, if $\tau\notin Y_{\infty}$, then there is a subsequence of
$\big(\sigma_n(\tau)-\tau\big)_{n\in\N}$ that decays faster than any power of $n$.
\end{cor}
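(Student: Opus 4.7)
The plan is to read~\eqref{eq:ineq1} and~\eqref{eq:ineq2} off from Theorem~\ref{thm:main_theorem}, and then to obtain the final assertion by a diagonal extraction. Fix $k \geq 0$ and $\tau \notin Y_k$, and set $v_n := (\sigma_n(\tau) - \tau) n^{k+1}$. From~\eqref{eq:main_theorem} we read off that $S_{k+1}'(\tau)$ contains a positive finite element $p$, equal to $c_k$ in the case $\tau \notin Y_{k+1}$ and to $c_k/2$ in the case $\tau \in X_{k+1}$; in either case $0 < p \leq c_k$, and symmetrically $-p \in S_{k+1}'(\tau)$.

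If $p = c_k/2$, any neighborhood of $p$ of radius less than $c_k/2$ lies inside $(0, c_k)$ and by definition of limit point catches infinitely many $v_n$, yielding~\eqref{eq:ineq1}. If $p = c_k$, a neighborhood of $p$ can straddle $c_k$ and the inequality $v_n < c_k$ is no longer automatic; here the plan is to revisit the subsequence $v_{n_j} \to c_k$ constructed while proving Theorem~\ref{thm:main_theorem} and to verify that $v_{n_j} - c_k$ is eventually negative, so that $v_{n_j} < c_k$ for all large $j$. Inequality~\eqref{eq:ineq2} follows by a symmetric argument from $-p \in S_{k+1}'(\tau)$.

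For the final assertion, $\tau \notin Y_\infty$ gives $\tau \notin Y_k$ for every $k \geq 1$, so~\eqref{eq:ineq1} provides, for each such $k$, infinitely many $n$ with $|\sigma_n(\tau) - \tau| < c_k / n^{k+1}$. A standard diagonal extraction then produces a strictly increasing sequence $(n_k)$ along which $|\sigma_{n_k}(\tau) - \tau| < c_k / n_k^{k+1}$; this decays faster than any fixed power of $n_k$. The main obstacle is clearly the strict inequality in the case $p = c_k$: knowing merely that $c_k$ is a limit point is not enough, and one must appeal to finer information from the proof of Theorem~\ref{thm:main_theorem}, namely the sign of the leading error term along the approaching subsequence.
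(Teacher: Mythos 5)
Your reduction to Theorem~\ref{thm:main_theorem} fails precisely in the main case, and you say so yourself: when $\tau\notin Y_{k+1}$ the only positive finite limit point of $S_{k+1}(\tau)$ is $c_k$ itself, and the statement of Theorem~\ref{thm:main_theorem} carries no information about the side from which $c_k$ is approached, so the strict bound $(\tau-\sigma_n(\tau))n^{k+1}<c_k$ --- which is exactly what the corollary adds (the paper's remark after the corollary states explicitly that it is sharper than what follows from Theorem~\ref{thm:main_theorem}) --- is left as a ``plan to revisit the proof''. That unexecuted plan is where the whole proof lives. To carry it out you need the quantitative ingredients that the paper uses directly: since $\tau\notin Y_k$, Theorem~\ref{thm:blocks} makes the tail of $(s_n)_n$ a concatenation of blocks $\pm B_{k+1}=\pm(B_k,-B_k)$, so there are infinitely many $m$ with $(s_r)_{m\leq r<m+2^{k+1}}=(B_k,-B_k)$; for such $m$ one has $s_m=+1$ and $s_{m+2^k}=-1$, hence by~\eqref{eq:signs_to_ineq_2} $0\leq\tau-\sigma_{m-1}\leq g_k(m)$, and the \emph{pointwise strict} inequality $g_k(x)<c_kx^{-k-1}$ of Lemma~\ref{lem:thm_to_thm} (not merely the asymptotic $g_k(x)\sim c_kx^{-k-1}$) gives $0\leq(\tau-\sigma_{m-1})(m-1)^{k+1}<c_k$, with strictness on the left for all large $m$ by Proposition~\ref{prop:equality}; the blocks $(-B_k,B_k)$ give the other inequality symmetrically. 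Equivalently, in your notation: along the approaching subsequence in the proof of Theorem~\ref{thm:main_theorem} one has $\delta_m(\sigma_{M-1}-\tau)\leq 0$ by the greedy rule and $M^{k+1}g_k(M)<c_k$, whence the value stays strictly below $c_k$ --- but none of this is in your write-up, and it cannot be extracted from the limit-set statement alone. The same gap affects your ``symmetric argument'' for the other inequality, since $-c_k$ is likewise approached from an unspecified side. (Your treatment of the case $\tau\in X_{k+1}$, where the limit point is $c_k/2$, is fine.)

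Two further, smaller points. The strict lower bound $0<\pm(\sigma_n(\tau)-\tau)$ also needs an argument (a limit point equal to $c_k$ or $c_k/2$ only prevents the value from being $\leq 0$ infinitely often once strictness is settled); in the paper this is Proposition~\ref{prop:equality}, which guarantees $\sigma_n=\tau$ can happen at most once. And in your final diagonal extraction you should additionally require $n_k$ to grow at least like $c_k$ (possible, since for each $k$ there are infinitely many admissible $n$): with $c_k=2^{\binom{k}{2}}k!$ growing superexponentially, the bare bound $|\sigma_{n_k}(\tau)-\tau|<c_k/n_k^{k+1}$ does not by itself decay faster than every fixed power of $n_k$; imposing, say, $n_k\geq c_k$ yields $|\sigma_{n_k}(\tau)-\tau|<n_k^{-k}$ and the conclusion follows.
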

\begin{remark}
Corollary~\ref{cor:inequalities} is slightly sharper than what follows from
Theorem~\ref{thm:main_theorem}. Its proof also shows that the $n$ satisfying one of~\eqref{eq:ineq1}
and~\eqref{eq:ineq2} form an arithmetic progression modulo $2^k$ when $n$ is sufficiently large.
\end{remark}

\begin{figure}[h]
\includegraphics[width=0.485\textwidth]{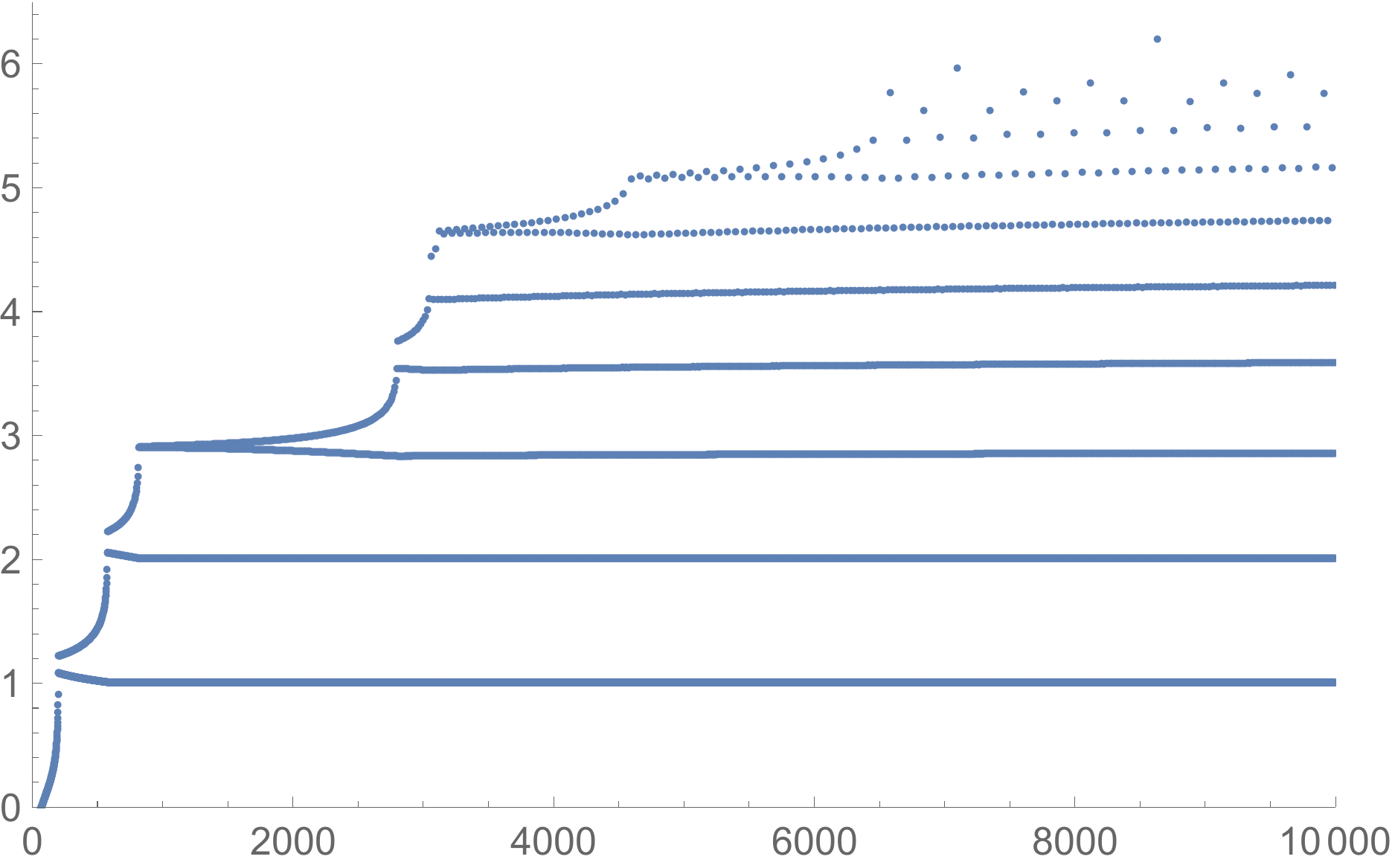}
\hspace{0.2cm}
\includegraphics[width=0.485\textwidth]{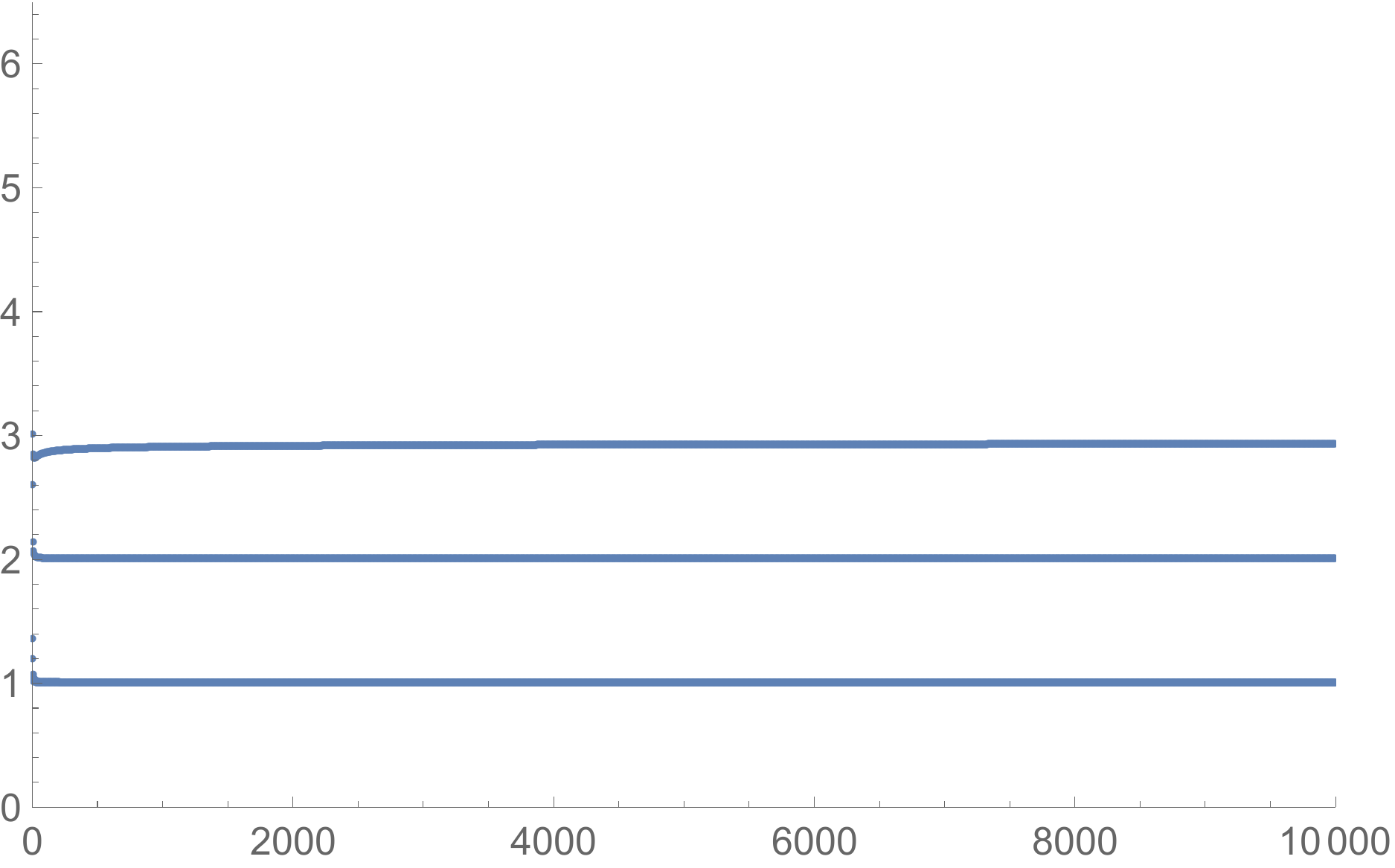}
\caption{The graphs of $-\frac{\log|\sigma_n(\tau)-\tau|}{\log n}$ for $2\leq n\leq 10^4$ for
         $\tau=\sqrt{2}+2\sqrt{5}\notin Y_{\infty}$ (left)
         and $\tau = U_{3,2} = \frac{1}{8}((2\sqrt{2}-1) \pi + \log 4)\in X_3$ (right).}
\label{fig:1}
\end{figure}

For almost all $\tau$ one can even determine exactly how small $|\tau - \sigma_{n}(\tau)|$ can be.
Indeed, one obtains that, for almost all $\tau$, the distance $|\tau - \sigma_{n}(\tau)|$ is infinitely
often as small as $e^{-(\log n)^2/\log 4(1+o(1))}$ and that this bound is optimal. It is quite remarkable that
the sequence $s_n(\tau)$ allows one to recover, albeit for a subsequence and only for almost all $\tau$,
exactly the estimate~\eqref{eq:mN_ineq} obtainable with probabilistic
methods~\cite{BettinMolteniSanna1}.
\begin{thm}\label{thm:asymp}
For almost all $\tau\in\R$ one has
\begin{equation}\label{eq:asymptau}
\liminf_{n\to\infty}\frac{\log|\tau - \sigma_{n}(\tau)|}{(\log n)^2}
= -\frac{1}{\log 4}.
\end{equation}
\end{thm}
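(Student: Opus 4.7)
This statement is an equality of $\liminf$'s, and I would prove the two inequalities $\liminf \leq -1/\log 4$ and $\liminf \geq -1/\log 4$ separately, both for almost every $\tau$. Throughout, write $R_n(\tau) := \tau - \sigma_n(\tau)$.

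For the upper bound on the $\liminf$ (existence of a fast subsequence), my plan is to exploit Corollary~\ref{cor:inequalities} with $k = k(n)$ growing. For every fixed $k$ and every $\tau \notin Y_k$, Corollary~\ref{cor:inequalities} gives infinitely many $n$ with $|R_n| < c_k/n^{k+1}$, and the remark after the corollary states that these $n$ lie in a fixed arithmetic progression modulo $2^k$ eventually. For $\tau \notin Y_\infty$ the residue classes for successive $k$'s should be consistent (forming a $2$-adically compatible sequence), so a subsequence $n_m \to \infty$ can be chosen that simultaneously satisfies the progression condition for a $k = k(m) \to \infty$. Optimising in $k$ via Stirling,
\[
\log \frac{c_k}{n^{k+1}} = \binom{k}{2}\log 2 + \log(k!) - (k+1)\log n,
\]
the minimum is attained near $k \approx \log_2 n$ and equals $-(\log n)^2/\log 4 \cdot (1+o(1))$, which is the claimed rate. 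Concretely, one can check the calculation on the subsequence $n_m = 2^m$, where $k = m$ is allowed and $c_m / (2^m)^{m+1}$ already produces the correct exponent.

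For the lower bound on the $\liminf$ (eventual lower bound), my plan is a first Borel--Cantelli argument with $\tau$ Lebesgue-distributed in a bounded interval $[a,b]$. Writing the level sets $I_{\vec{\eps}} := \{\tau : s_j(\tau) = \eps_j,\, j=1,\dots,n\}$ and values $v_{\vec{\eps}} := \sum_{j\leq n}\eps_j/j$, on each $I_{\vec{\eps}}$ one has $\sigma_n(\tau) = v_{\vec{\eps}}$, so
\[
\{\tau \in [a,b] : |R_n(\tau)| < \delta_n\} \subseteq \bigcup_{\vec{\eps}} I_{\vec{\eps}} \cap (v_{\vec{\eps}} - \delta_n,\, v_{\vec{\eps}} + \delta_n).
\]
For small $\delta_n$ only the \emph{active} sequences with $v_{\vec{\eps}} \in I_{\vec{\eps}}$ contribute, since $|R_n|$ is bounded below by a positive constant on any inactive level set. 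The crucial input is that the number of active sequences whose level set meets $[a,b]$ grows only polynomially in $n$: at step $n$ an active interval is split at its own value $v_{\vec{\eps}}$, and any half of length less than $1/n$ is automatically inactive at the next step, so the Thue--Morse cascade governing the greedy dynamics caps the live branches at $O(n^c)$ for some $c>0$. Since each active piece contributes at most $2\delta_n$ to the measure, the bad set at step $n$ has Lebesgue measure $\lesssim n^c \delta_n$, which is summable once $\delta_n = \exp(-(1/\log 4 + \eps)(\log n)^2)$.

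The hard part is proving the polynomial bound on the count of active level sets used for the lower bound: it is essentially a combinatorial/geometric analysis of the Thue--Morse cascade generated by the greedy dynamics, and forms the technical heart of the argument. The upper bound, by contrast, is largely computational once one accepts the quantitative content of Corollary~\ref{cor:inequalities} and the compatibility of its arithmetic progressions across the different $k$'s; the matching constant $-1/\log 4$ on both sides is of course produced by the same optimisation $k \sim \log_2 n$.
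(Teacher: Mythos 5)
There are genuine gaps in both halves of your plan. For the direction $\liminf\le -1/\log 4$, your argument is essentially deterministic (you only exclude $\tau\in Y_\infty$), but that cannot work: Corollary~\ref{cor:inequalities} for a fixed $k$ gives infinitely many $n$ with $|\tau-\sigma_n|n^{k+1}<c_k$, yet it says nothing about \emph{how soon} such $n$ occur relative to $2^k$. To extract the exponent $-(\log n)^2/\log 4$ you need, for infinitely many $k$, a good $n$ of size roughly $2^kk$ (up to factors of subexponential growth in $k$), i.e.\ you need the block $B_k$ to appear in $(s_n(\tau))_n$ not much later than time $2^kk$. The $2$-adic compatibility of the arithmetic progressions does not control this timing at all, and there do exist $\tau\notin Y_\infty$ for which the level transitions are delayed arbitrarily (one can postpone each promotion from $\pm B_k$-blocks to $B_{k+1}$ by a nested-interval choice, much as in Proposition~\ref{prop:limitation}); for such $\tau$ the available $k$ at scale $n$ is far smaller than $\log_2 n$ and the liminf is strictly larger than $-1/\log 4$. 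In particular your ``check on $n_m=2^m$ with $k=m$ allowed'' would force the greedy expansion to begin with the full block $B_m$ for every $m$, which happens essentially only for $\tau_0$ (Theorem~\ref{thm:A5}), certainly not almost everywhere. So this inequality is itself only an almost-everywhere statement and needs its own measure-theoretic argument; the paper supplies it in Proposition~\ref{prop:upper}, which uses the rigidity of the sign patterns (Lemma~\ref{lem:imp_thm2}) and a lattice-point count (Lemma~\ref{lem:hyperpyramid}) to show that, off a null set, for infinitely many $k$ one has $|\tau-\sigma_{n-1}|<g_k(n)$ for some $n\le f(k)2^kk$ with $f$ growing slowly; only then does the optimisation $k\sim\log_2 n$ (Lemma~\ref{lem:ub}) give the claimed rate.

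For the direction $\liminf\ge -1/\log 4$ your covering/Borel--Cantelli scheme is the same in spirit as the paper's Proposition~\ref{prop:lower}, but the key input you assert is false: the number of realizable sign vectors of length $n$ for $\tau$ in a bounded interval is not $O(n^c)$; the transition times satisfy $n_{h+1}\equiv n_h\pmod{2^h}$ and their count alone is of order $n^{k}/(2^{\binom k2}k!)$ with $k$ up to about $\log_2 n$, i.e.\ of size $\exp\bigl((1+o(1))(\log n)^2/\log 4\bigr)$ --- a quasi-polynomial quantity $n^{\Theta(\log n)}$, which is also what one should expect since the minimal gaps themselves are of size $e^{-(\log n)^2/\log 4}$. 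Your proposed mechanism (a child of length $<1/n$ is inactive at the next step) is correct only instantaneously; cells become active again later, so it caps nothing. The Borel--Cantelli sum can still be made to converge because of the $\eps$ slack in $\delta_n$ (the paper instead gains the factor $f(k)$ in $g_k(n)/f(k)$, with $\log f(k)=O(k\log k)=o((\log n)^2)$, so the constant is unaffected), but proving a count of exactly this critical size is the technical heart of the proof: it requires Lemma~\ref{lem:imp_thm2} (only boundedly many free signs per level transition plus the congruence constraints) together with Lemmas~\ref{lem:hyperpyramid} and~\ref{lem:ml2}. As written, you assert a bound that is both unproved and too strong to be true, so the lower bound is not established. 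A further, smaller, issue: you cannot discard the ``inactive'' level sets, since their value $v_{\vec\eps}$ can lie outside $I_{\vec\eps}$ but within $\delta_n$ of it (the distance is positive for each fixed cell, but the cells change with $n$, so there is no uniform positive lower bound); the covering must retain every cell whose value is $\delta_n$-close to it.
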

\begin{remark}
Equation~\eqref{eq:asymptau} does not hold for all $\tau\in\R$. For example, by
Theorem~\ref{thm:main_theorem} one has that~\eqref{eq:asymptau} does not hold for all $\tau\in
Y_{\infty}$. Indeed, if $\tau\in X_k$ one has that $|\tau - \sigma_{n}(\tau)|$ is always larger than a
constant times $n^{-k}$. In the opposite direction, in Proposition~\ref{prop:limitation} below we shall
show that for any function $f\colon\N\to\R_{>0}$ there exists a real number $\tau$ such that $|\tau -
\sigma_{n}(\tau)|<f(n)$ infinitely often.
\end{remark}

\begin{figure}[h]
\includegraphics[width=0.585\textwidth]{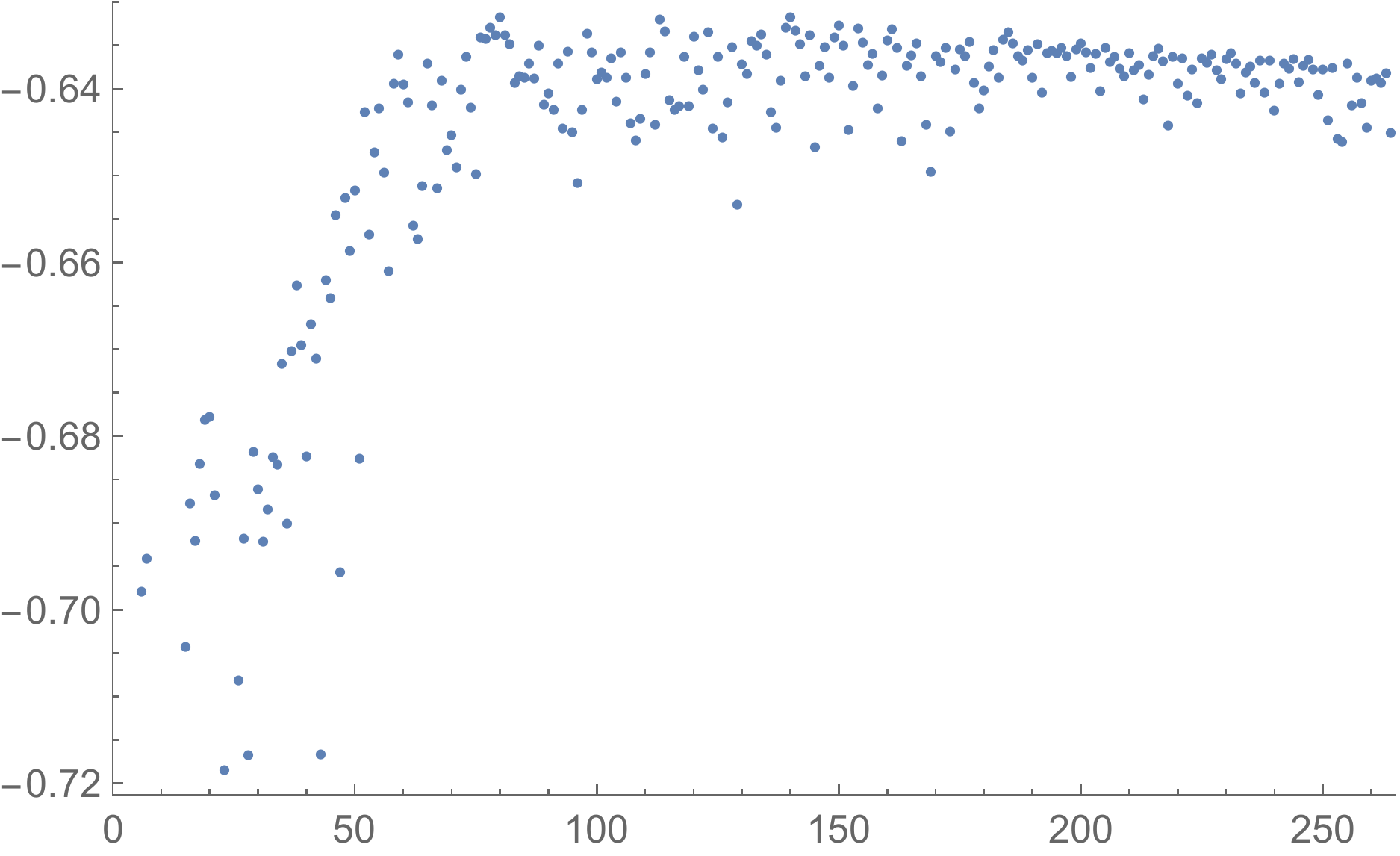}
\caption{The plot of $\frac{\log|\sigma_{m_j}(\tau)-\tau|}{(\log m_j)^2}$ for $\tau=0$ and $1\leq
         j\leq 265$ (i.e., $m_j<10^{10}$), where $m_j$ is the minimal integer such that
         $|\sigma_{m_j}(\tau)-\tau| < |\sigma_{m}(\tau)-\tau|$ for all $m<m_j$. Note that the sequence
         is not far from $-1/{\log 4}\approx -0.721\dots$ which Theorem~\ref{thm:asymp} predicts as the
         liminf for almost every $\tau$.}
\label{fig:2}
\end{figure}

The proofs of Theorem~\ref{thm:main_theorem} and Theorem~\ref{thm:asymp} are based on a surprising
connection between the sequence $(s_n({\tau}))_{n\in\N}$ and the Thue--Morse sequence $(t_n)_{n\geq 0}$.
This is a binary sequence whose first few values are
\begin{equation*}
0, 1, 1, 0, 1, 0, 0, 1, 1, 0, 0, 1, 0, 1, 1, 0, 1, 0, 0, 1, \ldots
\end{equation*}
and which can be defined in several equivalent ways. For example, it can be defined by setting $t_n := 0$
(respectively, $t_n := 1$) if the binary expansion of $n$ has an even (respectively, odd) number of $1$s.
Alternatively, it can be defined by the recurrence relation
\begin{equation*}
t_0 :=0,
\quad
t_{2n} = t_n,
\quad
t_{2n + 1} = 1 - t_n,
\end{equation*}
or by the $L$-system which starts with $0$ and at each step substitutes the digits $0$ and $1$ with $0,1$
and $1,0$, respectively (see~\cite{DekkingMendesFrancePoortenI,MR0685557,MR0685559,Erratum_MR0685557} and~\cite[Section~2.2]{Lothaire1}).
Furthermore, the Thue--Morse sequence is $2$-automatic, meaning that $t_n$ is obtained by feeding a deterministic finite automaton with an output function with the base-$2$ representation of $n$~\cite[Example~5.1.2]{MR1997038}.
The Thue--Morse sequence has repeatedly appeared in several fields of mathematics, including dynamic
systems, combinatorics, number theory and approximation theory, and has been studied extensively in its
many aspects. We refer for example to~\cite{Allouche, Dekking1, DubucElqortobi, Euwe,  KomornikLoreti,
MauduitRivat2, MauduitRivat, Morse, Prouhet} and to the nice survey of
Allouche and Shallit~\cite{AlloucheShallit} for a more extensive discussion of the ubiquity of the
Thue--Morse sequence.

For our purposes, it is more convenient to modify the Thue--Morse sequence so that it takes its values in
$\{\pm 1\}$, and thus we set $\eps_n := (-1)^{t_n}$ for all $n\geq 0$.

The first connection between the Thue--Morse and our greedy sequence is apparent from the following
identity~\cite{robbins,woods}  (see~\cite{MR3942297,AlloucheShallitjl,Shallitjnt} for some generalizations)
\begin{equation*}
\prod_{n=0}^\infty \bigg(1-\frac{1}{2n+2}\bigg)^{\eps_{n}}
= \frac{1}{\sqrt{2}}.
\end{equation*}
Shallit observed, and Allouche and Cohen~\cite{AlloucheCohen} later proved, that for all $n$ one has
$\prod_{n=0}^{j-1}(1-\frac{1}{2n+2})^{\eps_n} > 1/\sqrt{2}$ if and only if $\eps_j=1$. In other words,
passing to the logarithms, $\sum_{n=0}^\infty\eps_nf(n)$ is the greedy series for $-\frac{1}{2}\log2$
with respect to the weight function $f(x):=\log(1-\frac{1}{2x+2})$.

It turns out that the connection between the sequence of signs $s_{n}(\tau)$ and the Thue--Morse sequence is much broader than the result of Allouche and Cohen might suggest.
Indeed, $s_n(\tau)$ can be written in terms of the Thue--Morse sequence for all $\tau$. We describe this connection in
the following result, which is the key ingredient in the proof of Theorem~\ref{thm:main_theorem}.\\
We let $B_{r}$ denote the block $(\eps_{n})_{0\leq n< 2^r}$ for all $r\in\Z_{\geq 0}$.
Also, given two or more vectors $v_{i}=(v_{i,j})_{j}$, with $(v_1,v_2,\dots)$ we mean the vector (or
infinite sequence) obtained by concatenating the vectors $v_1,v_2,\dots$
Moreover, given a vector $v$ and a scalar $\kappa$, with $\kappa\cdot v$ we mean the vector $v$ multiplied by the scalar $\kappa$; for
example, $(-1)\cdot B_r=(-\eps_{n})_{0\leq n< 2^r}$.
\begin{thm}\label{thm:blocks}
Let $\tau\in\R$. Then there exist a non-decreasing sequence $(k_i)_{i\in\N}$, with $k_i\in\Z_{\geq 0}$ for
all $i$, and a sequence $(\kappa_i)_{i\in\N}$, with $\kappa_i\in\{\pm 1\}$, such that
\begin{equation}\label{eq:blocks_expansions}
(s_n(\tau))_{n\in\N}=(\kappa_1\cdot B_{k_1}, \kappa_2\cdot  B_{k_2},\kappa_3\cdot B_{k_3},\dots).
\end{equation}
Moreover,  $\lim_{i\to\infty}k_i=+\infty$ if $\tau\notin Y_{\infty}$, whereas if $\tau\in X_k$ for some
$k$, then $k_i=k$ and $\kappa_i=1$ for $i$ large enough.
\end{thm}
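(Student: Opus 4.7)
My plan is to prove the theorem by iteratively extracting maximal Thue--Morse blocks from the greedy sequence. The core tool is a \emph{block emergence lemma}: for $N,r\in\Z_{\geq 0}$ and $\kappa\in\{\pm 1\}$, if $\delta_N:=\tau-\sigma_N(\tau)$ satisfies $\kappa\delta_N>0$ together with an explicit upper bound $\kappa\delta_N<\Theta_r(N)$ of order $N^{-r}$, then
\begin{equation*}
(s_{N+1}(\tau),\dots,s_{N+2^r}(\tau))=\kappa\cdot B_r,\qquad \delta_{N+2^r}=\delta_N-\kappa T_r(N),
\end{equation*}
where $T_r(N):=\sum_{n=0}^{2^r-1}\eps_n/(N+1+n)$, and the new error has sign $-\kappa$ and again satisfies the level-$r$ bound at the new base point. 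The lemma is proved by induction on $r$ via the self-similarity $B_{r+1}=(B_r,(-1)\cdot B_r)$: applying the level-$r$ statement to the first $2^r$ steps completes a $B_r$ block and yields an error of opposite sign; one then checks it still satisfies the level-$r$ hypothesis with $-\kappa$ in place of $\kappa$, so the next $2^r$ signs form $(-\kappa)\cdot B_r$, and concatenation gives $B_{r+1}$.

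A parallel elementary calculation, iterating the identity $T_r(N)=-\sum_{n=0}^{2^{r-1}-1}\eps_n\bigl(\tfrac{1}{N+2n+2}-\tfrac{1}{N+2n+1}\bigr)$ coming from $\eps_{2n+1}=-\eps_{2n}$, gives $T_r(N)=\pm c_r/N^{r+1}+O(N^{-r-2})$ with $c_r=2^{\binom{r}{2}}r!$. This both supplies the constants appearing in Theorem~\ref{thm:main_theorem} and shows that the error drops by a factor $1/N$ across every block, which is exactly what is required to re-enter the lemma's hypotheses at the same (or higher) level at the next base point.

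Given the lemma, the decomposition is constructed greedily: starting from $N_0=0$ and $\sigma_0=0$, let $k_1$ be the \emph{largest} $r$ for which the lemma's hypothesis holds at $(N_0,\sigma_{0})$ with $\kappa=\sgn(\tau)$, set $\kappa_1=\sgn(\tau)$, and consume $2^{k_1}$ signs; iterating from $(2^{k_1},\sigma_{2^{k_1}}(\tau))$ produces $(k_i,\kappa_i)_i$. Monotonicity $k_{i+1}\geq k_i$ follows because the sharpened post-block bound places the new error well within $\Theta_{k_i}$ at the larger base point. The asymptotic dichotomy is then read off Theorem~\ref{thm:main_theorem}: if $\tau\notin Y_\infty$, the residual errors beat every fixed power of $N^{-1}$, so for any $r$ the threshold $\Theta_r$ is eventually met and $k_i\to\infty$; if $\tau\in X_k$, the error stabilizes at the exact order $N^{-k}$, pinning $k_i=k$ for large $i$, while $\kappa_i=1$ eventually holds because otherwise two consecutive same-size blocks of opposite signs would fuse into a $B_{k+1}$ block by $B_{k+1}=(B_k,-B_k)$, contradicting the maximality of $k_i=k$.

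The main obstacle is proving the block emergence lemma itself: one must verify the greedy choice at every intermediate step inside a $B_r$ block, i.e., pin down the sign of $\delta_{N+m}=\delta_N-\kappa\sum_{n=0}^{m-1}\eps_n/(N+1+n)$ for all $1\leq m<2^r$. The recursion $B_{r+1}=(B_r,-B_r)$ reduces this to checking that, at the midpoint of a $B_{r+1}$ block, the new error lies in the correct sub-interval for the next $B_r$. Calibrating $\Theta_r(N)$ precisely enough so that the hypothesis at level $r$ is preserved through the first $B_r$ sub-block and then implies the same hypothesis (with reversed $\kappa$) for the second is the delicate part of the argument, and it is exactly where the self-similarity of the Thue--Morse sequence is transferred to the doubling behaviour of the greedy error.
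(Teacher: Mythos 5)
Your overall architecture mirrors the paper's (a propagation lemma proved via the self-similarity $B_{r+1}=(B_r,-B_r)$ and the sign analysis of partial Thue--Morse sums, followed by an iterative extraction of blocks), but the central ``block emergence lemma'' is stated in a form that is actually false, and it is exactly the part you defer. You claim that from $0<\kappa\delta_N<\Theta_r(N)$ one gets the block $\kappa\cdot B_r$ \emph{and} that the new error $\delta_{N+2^r}=\delta_N-\kappa T_r(N)$ has sign $-\kappa$ \emph{and} satisfies the level-$r$ bound at the new base point. No calibration of $\Theta_r$ can deliver both conclusions unconditionally: the sign flip forces $\Theta_r(N)\leq T_r(N)$, while as $\delta_N\to 0^+$ the post-block error has magnitude arbitrarily close to $T_r(N)$, which strictly exceeds $\Theta_r(N+2^r)\leq T_r(N+2^r)<T_r(N)$ since $T_r$ is decreasing; so the level-$r$ bound at $N+2^r$ fails precisely when $\delta_N$ is small. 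The correct statement is unavoidably a dichotomy, as in Lemma~\ref{lem:A8n} of the paper: \emph{either} the original error already satisfies the level-$(r{+}1)$ bound at the same base point (in which case the block simply continues), \emph{or} the shifted error satisfies the level-$r$ bound at $N+2^r$ with opposite sign. Your induction step (``one then checks it still satisfies the level-$r$ hypothesis with $-\kappa$'') and your monotonicity argument both lean on the false single-alternative version. Moreover, the genuinely hard content --- pinning down the greedy choice at every intermediate step, which requires lower bounds of the type $\bigl|\sum_{0\leq\ell<r}\eps_\ell/(x+\ell)\bigr|>g_k(x-2^k)$ for $0<r<2^k$ and $x\geq 2^{k+1}(k+1)$ (Lemma~\ref{lem:A7n}, Corollary~\ref{cor:A3n}), together with the explicit threshold on the base point --- is acknowledged as ``the delicate part'' but not carried out, so the proof is incomplete at its core.

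Two further problems. First, you deduce the ``moreover'' clause by reading it off Theorem~\ref{thm:main_theorem}; in the paper that theorem is itself a consequence of the present one, so this is circular unless you supply an independent proof of the decay statements (the paper instead argues directly: if the block level never increased past $k$, the signs $\kappa_i$ at level $k$ would eventually stop changing, which by the definition~\eqref{eq:def_x_k} of $X_k$ forces $\tau\in Y_\infty$, using also $(-B_k,-B_k,\dots)=(-B_0,B_k,B_k,\dots)$). Second, the case $\tau\in X_k$ is immediate from the definition of $X_k$ as eventual periodicity of the sign sequence with repeating block $B_k$; your argument via the exact order of the error and a block-fusion contradiction is unnecessary and, as written, again rests on Theorem~\ref{thm:main_theorem}. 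The starting point $N_0=0$ is also outside the range where any such lemma can hold (the threshold $n\geq 2^{k+1}(k+1)$ is needed); this is harmless --- cover the initial segment by $\pm B_0$ blocks --- but should be said.
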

Since for all $r_1,r_2\in\Z_{\geq 0}$ with $r_1\leq r_2$ the block $B_{r_2}$ contains the block $B_{r_1}$,
we immediately deduce the following corollary.
\begin{cor}\label{cor:corblo}
If $\tau\notin Y_{\infty}$, then for all $r\geq 0$ the sequence $(s_n(\tau))_{n\in\N}$ contains the block
$B_{r}$ infinitely many times.
\end{cor}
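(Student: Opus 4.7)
The plan is to derive this corollary directly from Theorem~\ref{thm:blocks} using only the self-similar structure of the Thue--Morse blocks. Concretely, I would first invoke Theorem~\ref{thm:blocks} to decompose $(s_n(\tau))_{n\in\N}$ as the concatenation $(\kappa_1 \cdot B_{k_1}, \kappa_2 \cdot B_{k_2}, \ldots)$; since $\tau \notin Y_\infty$, the theorem guarantees $k_i \to +\infty$, so for any fixed $r \geq 0$ the inequality $k_i \geq r+1$ holds for all sufficiently large $i$.

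The next step is to record the self-similar identity $B_{s+1} = (B_s, -B_s)$ for every $s \geq 0$, which follows at once from the defining relation $\eps_{2n+1} = -\eps_n$ applied to $n=0,1,\ldots,2^{s}-1$ (together with $\eps_{2n} = \eps_n$ for the first half). Iterating, every $B_k$ with $k \geq r$ begins with $B_r$. Now fix $i$ with $k_i \geq r+1$ and write $\kappa_i \cdot B_{k_i} = (\kappa_i \cdot B_{k_i - 1}, -\kappa_i \cdot B_{k_i - 1})$; one of the two halves is the positive block $+B_{k_i - 1}$, and this block has $+B_r$ as a prefix since $k_i - 1 \geq r$. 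Thus, irrespective of the sign $\kappa_i \in \{\pm 1\}$, the block $\kappa_i \cdot B_{k_i}$ contains the specific block $B_r$ as a sub-block.

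Finally, since the terms in the concatenation~\eqref{eq:blocks_expansions} occupy pairwise disjoint ranges of indices, each of the infinitely many $i$ with $k_i \geq r+1$ contributes a distinct occurrence of $B_r$ in $(s_n(\tau))_{n\in\N}$, proving the corollary. I do not expect any genuine obstacle: the only mildly subtle point is that one cannot simply take $B_r$ to be a prefix of $B_{k_i}$ when $\kappa_i = -1$, and the fix is precisely the observation above that the \emph{other} half of $\kappa_i \cdot B_{k_i}$ is positive and hence begins with $+B_r$.
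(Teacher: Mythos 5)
Your proposal is correct and follows essentially the same route as the paper: Theorem~\ref{thm:blocks} gives $k_i\to\infty$, and the self-similarity $B_{s+1}=(B_s,-B_s)$ (the folding property~\eqref{eq:folding2}) shows every block $\pm B_{k_i}$ with $k_i\geq r+1$ contains $B_r$, yielding infinitely many disjoint occurrences. The paper states this as an immediate consequence of block containment; your explicit handling of the case $\kappa_i=-1$ via the positive half of $\kappa_i\cdot B_{k_i}$ just spells out the detail the paper leaves implicit.
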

We remark that Theorem~\ref{thm:blocks} characterizes the elements of the exceptional sets $X_k$ as the
real numbers such that the expansion~\eqref{eq:blocks_expansions} is eventually periodic with repeating
block $B_k$ (in fact, this will actually be our definition of $X_k$).  In particular, the elements in
$X_k$ can be written as a rational number plus $U_{k,m}$ for some $m \in [0,2^k)$, where
\begin{equation}\label{eq:def_U}
U_{k,m}:=\sum_{n=1}^{+\infty}\frac{f_{k}(n+m)}{n},
\end{equation}
with $f_{k}$ being the $2^k$-periodic function such that $f_{k}(n)=\eps_{n}$ for all $n \in [0, 2^k)$.
(Notice that $U_{k,m}=-U_{k,m+2^{k-1}}$ for all $m \in [0,2^{k-1})$.) For small $k$ one can easily write
the values $U_{k,m}$ explicitly. For example,
\begin{equation}\label{eq:u01}
U_{1,0}=-\log 2, \qquad
U_{2,0}=-\tfrac14 (\pi + \log 4),\qquad U_{2,1}=-\tfrac14 (\pi - \log 4).
\end{equation}
and so, in particular,
\begin{equation*}
X_1\subseteq \Q\pm \log 2,
\qquad  X_2\subseteq \Q\pm  \tfrac14 (\pm \pi \pm \log 4).
\end{equation*}
In general the constants $U_{k,m}$ can be written in terms of logarithms of cyclotomic units in
$\Q(\xi)$, where $\xi$ is a primitive $2^k$-root of unity. Baker's theorem can then be used to show that
$U_{k,m}$ is transcendental for all choices of $k$ and $m$.  We refer to Section~\ref{sec:exceptional}
for more details on the sets $X_k$ and the constants $U_{k,m}$.

Since the Thue--Morse sequence plays a special role in our work, we examine the
case of the constant
\begin{equation*}
\tau_0 := \sum_{n=1}^{+\infty} \frac{\eps_{n-1}}{n},
\end{equation*}
in more detail. This constant appears as the value at $s=1$ of the Thue-Morse
Dirichlet series $\sum_{n=1}^{\infty}\frac{\eps_{n-1}}{n^s}$, which was studied
in~\cite{AlloucheCohen,ACM-FS}. In Section~\ref{sec:A8} we prove that this series converges, and that the
sequence $(\eps_{n})_{n\geq 1}$ indeed coincides with the sequence of signs produced by the greedy
algorithm (much as in the aforementioned work of Allouche and Cohen~\cite{AlloucheCohen}). In other
words, $\eps_{n-1}=s_n(\tau_0)$ for all $n\in\N$. We remark that this is not immediate. Since the
Thue--Morse sequence is not eventually periodic, then $\tau_0\notin Y_\infty$ and the results in
Corollaries~\ref{cor:main_thm_special_case}--\ref{cor:inequalities} apply to $\tau_0$. However, since we
know the sequence $(s_n(\tau_0))_{n\in\N}$ exactly, we are able to prove these results in an explicit form.
\begin{thm}\label{thm:A5}
For all $n\geq 1$ we have $s_{n}(\tau_0)=\eps_{n-1}$. Moreover, let $k\geq 1$, and let $n=2^k n'$ with
$n'$ odd. Then $|\sigma_n-\tau_0|n^k\leq c_{k-1}$, and $(\tau_0-\sigma_n(\tau_0))n^{k+1}\sim_k
\eps_nc_k$ as $n'$ goes to infinity. Finally,~\eqref{eq:asymptau} holds for $\tau=\tau_0$ in the more
precise form
\begin{equation}\label{eq:asymptau0}
\liminf_{n\to\infty}\frac{\log|\tau_0 - \sigma_{n}(\tau_0)|+\frac{1}{\log 4}(\log n)^2}{\log n\log\log n}
=\frac{1}{\log 2}.
\end{equation}
\end{thm}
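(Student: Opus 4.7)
The plan is to prove the four assertions in sequence.

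\medskip

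\textbf{Part 1.} I would prove $s_n(\tau_0)=\eps_{n-1}$ by strong induction on $n$. Granted the claim up to $n=N$, we have $\tau_0-\sigma_N(\tau_0)=T_N$ where $T_N:=\sum_{m\geq N}\eps_m/(m+1)$, so the inductive step reduces to showing $\sgn(T_N)=\eps_N$. The central tool is the Thue--Morse cancellation $\eps_{2m}+\eps_{2m+1}=0$, which for even $N=2M$ yields
\begin{equation*}
T_{2M}=\sum_{\ell\geq M}\frac{\eps_\ell}{(2\ell+1)(2\ell+2)}.
\end{equation*}
Writing $N=2^k N_k$ with $N_k$ odd (so $k$ is the $2$-adic valuation of $N$) and iterating the pairing $k$ times, we obtain a representation $T_N=\sum_{j\geq N_k}\eps_j W_k(j)$ for an explicit positive, strictly decreasing, convex weight $W_k$ of order $j^{-(k+1)}$. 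Since $\eps_N=\eps_{N_k}$, it suffices to verify the sign of this sum (starting at the odd index $N_k$) is $\eps_{N_k}$: one isolates the leading term $\eps_{N_k}W_k(N_k)$ and bounds the remaining even-indexed tail in absolute value by $W_k(N_k)$, applying the pairing identity once more and using Abel summation together with the boundedness of the partial sums of $(\eps_j)$.

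\medskip

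\textbf{Parts 2 and 3.} For $n=2^k n'$ with $n'$ odd and $k\geq 1$, the same iterated-pairing representation together with Abel summation gives $|T_n|\leq(1+o(1))W_k(n')$. A bookkeeping computation of the weight yields the asymptotic
\begin{equation*}
W_k(j)=\frac{c_k}{(2^k j)^{k+1}}\left(1+O(1/j)\right)
\end{equation*}
as $j\to\infty$. Combined with a pointwise bound valid uniformly in $n'$ (which contributes the constant $c_{k-1}$ in place of $c_k$), this yields the bound of Part 2; noting that for fixed $k$ and $n'\to\infty$ the leading term $\eps_{n'}W_k(n')$ dominates, it also yields Part 3, $T_n\cdot n^{k+1}\sim\eps_{n'}c_k=\eps_n c_k$.

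\medskip

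\textbf{Part 4.} Apply Part 2 along the subsequence $n=2^k$, for which $v_2(n)=k=\log n/\log 2$. Stirling's formula $\log c_{k-1}=\tfrac12 k^2\log 2+k\log k+O(k)$ then gives
\begin{equation*}
\log|T_{2^k}|\leq\log c_{k-1}-k^2\log 2=-\frac{(\log n)^2}{\log 4}+\frac{\log n\log\log n}{\log 2}+O(\log n),
\end{equation*}
establishing $\liminf\leq 1/\log 2$. For the matching lower bound, I would split on the ratio $\alpha:=v_2(n)/\log_2 n$: when $\alpha$ is bounded away from $1$, Part 3 gives $\log|T_n|+(\log n)^2/\log 4\sim(\alpha-1)^2(\log n)^2/(2\log 2)$, so the ratio in \eqref{eq:asymptau0} tends to $+\infty$. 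Thus only $n$ with $v_2(n)$ close to $\log_2 n$ are relevant, and for such $n$ (where $n'$ is bounded or grows slowly) a refinement of Part 3 valid for fixed $n'$ and $k\to\infty$ yields a matching lower bound $|T_n|\geq(1-o(1))c_k/n^{k+1}$. In particular for $n=2^k$ itself, one shows $W_k(1)\sim c_{k-1}/(2^k)^k$ as $k\to\infty$ and that the remaining tail in $T_{2^k}=-W_k(1)+\sum_{j\geq 2}\eps_j W_k(j)$ is subdominant.

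\medskip

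\textbf{The main obstacle} is the iterated-pairing analysis of Part 1 — verifying that the leading term of $T_N$ truly dominates at every iteration, and especially at the odd boundary index $N_k$ — together with the careful treatment of the $n'=1$ case in Part 4, where the clean $n'\to\infty$ asymptotic of Part 3 does not apply and one needs a two-sided estimate of $W_k(1)$ as $k\to\infty$.
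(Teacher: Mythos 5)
Your route for Parts 1--3 is genuinely different in mechanism from the paper's: you iterate the pairing $\eps_{2m}+\eps_{2m+1}=0$, which produces the weight $W_k(j)=g_k(2^kj+1)$ in the paper's notation (not to be confused with the vectors $W_k$ of Proposition~\ref{prop:A4n}), whereas the paper works with the iterated partial sums $\Eps_k$ and the kernels $G_k(x)=k!/\prod_{\ell=0}^k(x+\ell)$ through repeated Abel summation, i.e.~\eqref{eq:A35n}. Your scheme can be made to work, but note one point you assert rather than prove in Part 1: pairing once more plus Abel summation bounds the tail only by $2W_{k+1}\big(\tfrac{N_k+1}{2}\big)=2\big(W_k(N_k+1)-W_k(N_k+2)\big)$, and when $N_k$ is small compared to $k$ (e.g.\ $N=2^k$) the asymptotic $W_k(j)\asymp c_k(2^kj)^{-k-1}$ is useless since it is not uniform in $k$; the comparison with $W_k(N_k)$ really needs the convexity you listed, via $W_k(N_k)=\sum_{j\geq N_k}\big(W_k(j)-W_k(j+1)\big)> 2\big(W_k(N_k+1)-W_k(N_k+2)\big)$, so this is fillable.

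The genuine gap is the lower bound in Part 4. First, the claimed matching bound $|T_n|\geq(1-o(1))\,c_k/n^{k+1}$ for bounded $n'$ is false: for $n=2^k$ your own Part 2 gives $|T_{2^k}|\leq c_{k-1}/n^{k}=\tfrac{2}{k}\,c_k/n^{k+1}$, so the asserted lower bound exceeds the proven upper bound by a factor $\sim k/2$ (it is also inconsistent with your claim $W_k(1)\sim c_{k-1}/2^{k^2}$ in the same paragraph, which itself would need proof). Second, in the regime where $\alpha=k/\log_2 n$ is bounded away from $1$ you invoke Part 3, but Part 3 is a fixed-$k$, $n'\to\infty$ asymptotic with $k$-dependent constants, while along such $n$ the valuation $k$ typically tends to infinity too, so it yields no lower bound there. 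What is needed --- and what the paper supplies through the explicit inequality~\eqref{eq:sign_tau} followed by a minimization over $k$ --- is a single lower bound for $|\tau_0-\sigma_n|$ valid for every $n$, uniform in both $k$ and $n'$ and accurate up to a factor $e^{O(k)}$ (which suffices, since the target~\eqref{eq:asymptau0} tolerates errors $O(\log n)$ against $\log n\log\log n$). Such a bound is available inside your framework from the same convexity step as above: $|T_n|\geq W_k(n')-2W_{k+1}\big(\tfrac{n'+1}{2}\big)\geq W_k(n'+2)=g_k(n+2^{k+1}+1)\geq c_k(5n)^{-k-1}$, and minimizing $\tfrac{\log 2}{2}k^2+\log k!-(k+1)\log n+O(k)$ over $k$ then gives $\log|T_n|\geq -\tfrac{(\log n-\log\log n)^2}{\log 4}+O(\log n)$ for all $n$, hence $\liminf\geq 1/\log 2$; but as written, your case split does not establish this.
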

The first part of this theorem allows one to compute $\tau_0$ very quickly.
The decimal representation of $\tau_0$ with $50$ correct digits is:
\begin{equation*}
\tau_0 = 0.39876108810841881240743054440027306033680891546719\ldots.
\end{equation*}

In this paper we focused on the case of signed harmonic sums, but our work could also be extended with little
effort to signed sums of the form $\sum_{n}s_n^{\alpha}(\tau)\, n^{-\alpha}$ for any $\alpha \in (0,1]$
(and, to some extent, also to more generic weight functions). In particular,
Theorems~\ref{thm:main_theorem},~\ref{thm:asymp} and~\ref{thm:blocks} still hold (removing the statement
$X_h\cap \overline\Q=\emptyset$), once one replaces $S_k(\tau)$ with
$S^{\alpha}_k(\tau):=\big((\sigma_n(\tau)-\tau)\cdot n^{k+\alpha-1}\big)_{n\in\N}$ and $c_k$ with
$c_{k}^{\alpha}:= 2^{\binom{k}{2}}\alpha(\alpha+1)\cdots (\alpha + k-1)$.

One can also generalize the problem to other directions.
For example, one can consider the analogous problems
in higher dimensions or can require that $s_n$ takes its values in the $k$-th roots of unity rather than in
$\{\pm1\}$. For $k\neq 2$ the greedy algorithm again produces a representation for every complex $\tau$.
The rate of convergence has a similar behavior, but some new phenomena appear, and it is possible that
in this case the role of the Thue--Morse sequence is played by its analogue for the base-$k$
representations of numbers. We still need to study these generalizations in depth.

\begin{acknowledgements}
S. Bettin is member of the INdAM group GNAMPA.
G.~Molteni and C.~Sanna are members of the INdAM group GNSAGA. 
The work of the first and second author is partially supported by  PRIN 2015 ``Number Theory and Arithmetic Geometry''.
C.~Sanna is supported by a postdoctoral fellowship of INdAM.
Part of this work was done during a visit, partially supported by INdAM, of the first author to the
Centre de Recherches Math\'{e}matiques in Montr\'eal. He would like to thank this institution for its
hospitality. He would also like to thank M. Radziwi\l\l{} for several useful discussions as well as for
the help with the C code used for Figure~\ref{fig:2}.
The authors would like to thank the anonymous referees for carefully reading the paper and for giving many suggestions that improved its quality.
\end{acknowledgements}

\subsection*{Notation}
For ease of notation, in the following we shall usually omit the dependence of $\tau$ in $s_n(\tau)$ and
$\sigma_n(\tau)$. We stress, however, that the sequences $(\sigma_n)_n$ and $(s_n)_n$ are the sequences
obtained from the greedy algorithm~\eqref{eq:def_sn} applied to $\tau$.
Finally, given a set $I\subseteq\R$, we indicate with $\chi_I(x)$ the characteristic function of $I$.

\section{Preliminary considerations and the proof of Theorem~\ref{thm:main_theorem}}\label{sec:2}
In the definition~\eqref{eq:def_sn} of the sequence $(s_n)_{n}$ we defined $s_n:=+1$ whenever
$\sigma_{n-1}=\tau$. Of course, other choices are also possible, and one could even decide to stop the
algorithm whenever such equality is achieved. Another natural choice would be that of defining
$(s_{n})_{n}$ as in~\eqref{eq:def_sn} when $\tau\geq 0$ and putting instead
\begin{equation*}
s_n(\tau):=\begin{cases}
1  , & \text{if }\tau> \sigma_{n-1}(\tau);\\
-1 , & \text{if }\tau\leq \sigma_{n-1}(\tau);
\end{cases}
\end{equation*}
when $\tau<0$. Notice that this alternative definition would ensure that $(s_n(\tau))_{n}$ is odd in
$\tau$ for all $\tau\neq0$. The conclusions of this paper are essentially independent of the choice made
in these special cases, so we chose the definition~\eqref{eq:def_sn} since it slightly simplifies some
statements. In any case, independently of the choice made, the equality $\sigma_n=\tau$ (which clearly
requires $\tau\in\Q$) could only be achieved at most one time.
\begin{prop}\label{prop:equality}
Let $h\in\Z$ and $k\in\N$ with $(h,k)=1$. Let $(r_n)_n$ be a sequence taking values in $\{\pm 1\}$. Then
there exists at most one $N\in\N$ such that $\sum_{n=1}^Nr_n/n=h/k$. Moreover, if such an $N\geq2$
exists, then it satisfies $N\leq 3\log k$.
\end{prop}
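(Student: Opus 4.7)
Both assertions follow from a single $p$-adic valuation principle. Let $v_p$ denote the $p$-adic valuation. First, observe that in any interval of positive integers $[M{+}1,N]$ there is a unique element of maximal $2$-adic valuation: if $n_1<n_2$ both realized this maximum $v_2=a$, writing $n_i=2^a m_i$ with $m_i$ odd would force $m_1+1$ even, so that $n_1+2^a=2^a(m_1+1)\in[n_1,n_2]$ would have $v_2\geq a+1$, contradicting maximality. More generally, whenever a prime power $q=p^a$ lies in $(N/2,N]$, then $q$ is the unique multiple of $q$ in $[1,N]$, since $2q>N$.

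For the uniqueness assertion, suppose $\sum_{n=1}^{N_1}r_n/n=\sum_{n=1}^{N_2}r_n/n$ for some $N_1<N_2$, so that $S:=\sum_{N_1<n\leq N_2}r_n/n=0$. Setting $a:=\max_{N_1<n\leq N_2}v_2(n)$, the observation above singles out exactly one term of $S$ with $v_2(r_n/n)=-a$ while all others have $v_2\geq -a+1$; hence $v_2(S)=-a$, contradicting $S=0$.

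For the quantitative bound, fix $N\geq 2$ with $\sum_{n=1}^N r_n/n=h/k$ and $(h,k)=1$. Applying the principle with a general prime, every prime power $p^a\in(N/2,N]$ forces $v_p(h/k)=-a$ and hence $p^a\mid k$. For each prime $p$ at most one such $a$ can exist (else $p^{a+1}\geq 2p^a>N$), so multiplying the resulting coprime divisibilities gives
\[
\log k \;\geq\; \sum_{p^a\in(N/2,N]} a\log p \;\geq\; \psi(N)-\psi(N/2),
\]
where $\psi$ denotes Chebyshev's second function.

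It therefore suffices to verify the Chebyshev-type estimate $\psi(N)-\psi(N/2)\geq N/3$ for every integer $N\geq2$. The prime number theorem yields $\psi(N)-\psi(N/2)\sim N/2$, so the inequality holds with comfortable room for $N$ beyond some threshold via effective estimates (e.g.\ Rosser--Schoenfeld), and one finishes the proof by a direct enumeration of prime powers for the finitely many remaining small $N$. I expect this last elementary estimate to be the only technical step: the classical Chebyshev bounds $(\log 2)x\leq\psi(x)\leq(\log 4)x$ give a vacuous difference $\geq 0$, so one must either invoke a sharper effective version or supplement a slightly weaker analytic bound with a short numerical check.
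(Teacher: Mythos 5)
Your proof is correct and follows essentially the same route as the paper: the uniqueness via the unique element of maximal $2$-adic valuation in the interval (the paper phrases this by multiplying through by the lcm to obtain an odd integer), and the bound via the observation that every prime power in $(N/2,N]$ must divide $k$, so that $\log k\geq\psi(N)-\psi(N/2)$. The final estimate $\psi(N)-\psi(N/2)>N/3$ for all integers $N\geq 2$ is handled in the paper exactly as you anticipate, by appealing to the effective bounds of Rosser--Schoenfeld rather than the bare Chebyshev inequalities.
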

\begin{proof}
Suppose, to get a contradiction, that there exist $N,M\in\N$, with $N<M$, such that $\sum_{n=1}^Nr_n/n =
\sum_{n=1}^M r_n/n = \tau$.  In particular,
\begin{equation}\label{eq:double_eq}
\sum_{n=N+1}^{M} \frac{r_n}{n} = 0.
\end{equation}
Let $2^\nu$ be the greatest power of $2$ dividing some integer in the range $N+1,\ldots,M$. Notice that
we can assume that $M\geq N+2$ (otherwise the sum contains a unique term, which cannot be $0$), and in
particular $\nu\geq 1$. Now there exists a unique $m$ in $N+1,\ldots,M$ that is divisible by $2^\nu$.
Indeed, if there were two such integers $m,m'$ with $m<m'$, then, letting $m=2^\nu g$ with $g$ odd, we
would have that $N+1\leq m<m+2^\nu \leq m'\leq M$.
But $m + 2^\nu = 2^{\nu + 1} \tfrac{g+1}{2}$, which contradicts the maximality of $\nu$.
Writing $\ell := \lcm(n+1,\ldots,n')$, we have that $ \sum_{n=N+1}^{M} r_n\frac{\ell}{n} $ is odd
and hence non-zero, contradicting~\eqref{eq:double_eq}.

To prove that $N\leq 3\log k$ one can proceed in a similar way, observing that if $\sum_{n=1}^{N}
\frac{r_n}{n} = \frac{h}{k}$, then $k$ is divisible by all prime powers in $(N/2,N]$. 
In fact, let $p^\nu$ be any prime power in $(N/2,N]$.
Then $N<2p^\nu$, so that $p^\nu$ is the unique number in $[1,N]$ that is divisible by
$p^\nu$. Hence, when we multiply the sum $\sum_{n=1}^N \frac{r_m}{m}$ by the least common multiply of the
denominators, we get an integer which is congruent to $r_{p^\nu}$ modulo $p^\nu$; in particular, it is not divisible by $p$.
Thus $k\geq
e^{\psi(N)-\psi(N/2)}$, where $\psi(x)$ is the Chebyshev's function. By~\cite[Theorem
8]{RosserSchoenfeld} one easily deduces that $\psi(N)-\psi(N/2)>N/3$ for all integers $N\geq2$ and the
result follows.
\end{proof}

The set $W:=\{\tau\in\R\colon \sigma_N = \tau \text{ for some }N\in\N\}$ does not seem easy to
characterize, similarly to what happens in analogous problems on Egyptian fractions
(cf.~\cite[\S{D11}]{guy}). It is clear, however, that $W\subseteq\Q$, that $W$ is nonempty (for example, it
contains $1$, $1/2=1-1/2$, $1/6=1-1/2-1/3$) and that it is a decidable set of $\Q$, as there is an
algorithm that, for every $\tau\in\Q$, is able to determine if $\tau\in W$ in a finite number of steps.
We shall not consider the problem of studying $W$ in this paper. However, we remark that by
Proposition~\ref{prop:equality} the equality $\sigma_N=\tau$ can be achieved only once for each $\tau$, and
so $W$ does not depend on which version of the greedy algorithm we use.

\subsection{Sequence of signs and the corresponding inequalities}
A sequence of inequalities corresponds to every sequence of signs $(s_n)_{n}$; 
indeed, one has
\begin{equation}\label{eq:signs_to_ineq}
s_{n+1}= +1 \iff \sigma_n \leq \tau
\qquad\text{and}\qquad
s_{n+1}  = -1 \iff \tau < \sigma_n.
\end{equation}
In particular, one has that $s_{n+1}=+1$ and $s_{n'+1}=-1$ with $n'>n$ if and only if $\sigma_n \leq \tau
< \sigma_{n'}$, whereas the reversed inequality holds if $s_{n+1}=-s_{n'+1}=-1$. In both cases
\begin{equation}\label{eq:signs_to_ineq_2}
s_{n+1}\cdot(\tau-\sigma_n)=|\sigma_n-\tau|\leq |\sigma_{n'} - \sigma_n| =\bigg| \sum_{m=n+1}^{n'} s_m/m\bigg|.
\end{equation}
Thus $|\sigma_n-\tau|\leq (n'-n)/(n+1)$ and stronger conclusions may be drawn if one is able to bound the
sum $\sum_{m=n+1}^{n'} s_m/m$ more effectively, which in turn amounts to controlling the sequence of
signs $(s_m)_m$. The following proposition gives a first example of this phenomenon.
\begin{prop}\label{prop:A1n}
Let $\tau\in\R$. Then after the first change of sign the sequence $(s_n)_n$ has no three consecutive
equal terms. As a consequence, $|\sigma_n-\tau|\leq 2/(n+1)$ for all $n$ following the first sign change.
In particular, the series $\sum_{n=1}^\infty s_n/n$ converges to $\tau$.
\end{prop}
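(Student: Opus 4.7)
The plan is to argue by contradiction. Suppose $s_n=s_{n+1}=s_{n+2}$ for some $n$ strictly after the first sign change, and assume (by symmetry) the common value is $+1$. By~\eqref{eq:def_sn} this gives $\sigma_{n-1},\sigma_n,\sigma_{n+1}\leq\tau$. Because $n$ lies strictly after a sign change, there exists a largest index $m\in\{1,\dots,n-1\}$ with $s_m=-1$, and by maximality $s_{m+1}=\cdots=s_{n+2}=+1$. From $s_m=-1$ we obtain $\sigma_{m-1}>\tau$, hence $\sigma_m=\sigma_{m-1}-1/m>\tau-1/m$. Combining with $\sigma_{n+1}=\sigma_m+\sum_{j=m+1}^{n+1}1/j\leq\tau$ yields the key estimate
\begin{equation*}
\sum_{j=m+1}^{n+1}\frac{1}{j}<\frac{1}{m}.
\end{equation*}

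The next step is to rule this out. Since $m\leq n-1$, the sum has at least two terms, so it is bounded below by $\tfrac{1}{m+1}+\tfrac{1}{m+2}=\tfrac{2m+3}{(m+1)(m+2)}$, and a short check shows this exceeds $1/m$ iff $m^2>2$, i.e., for all $m\geq 2$. For $m=1$, the sum $\sum_{j=2}^{n+1}1/j$ exceeds $1$ as soon as $n\geq 3$, since $\tfrac12+\tfrac13+\tfrac14=\tfrac{13}{12}$. The only remaining configuration is $(m,n)=(1,2)$, which describes $s_1=-1$ and $s_2=s_3=s_4=+1$ (occurring precisely for $\tau\in[-1/6,0)$); but then the first sign change is itself at position $2$, so $n=2$ is not strictly after it. This establishes the no-three-consecutive statement.

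For the quantitative bound, fix $n$ following the first sign change. By what was just proved, the triple $(s_{n+1},s_{n+2},s_{n+3})$ is not constant. If $s_{n+1}\neq s_{n+2}$, then~\eqref{eq:signs_to_ineq_2} with $n'=n+1$ gives $|\sigma_n-\tau|\leq 1/(n+1)$; otherwise $s_{n+1}=s_{n+2}\neq s_{n+3}$, and~\eqref{eq:signs_to_ineq_2} with $n'=n+2$ gives $|\sigma_n-\tau|\leq 1/(n+1)+1/(n+2)\leq 2/(n+1)$. Since the bound tends to zero, $\sigma_n\to\tau$ and the series converges to $\tau$.

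The delicate point is the case $m=1$: the key inequality $\sum_{j=m+1}^{n+1}1/j<1/m$ is genuinely satisfiable at $(m,n)=(1,2)$, so a block of three equal signs immediately following a single opening $-1$ really does occur for some $\tau$. This is exactly why the hypothesis \emph{after} the first sign change, interpreted strictly, is indispensable; a weaker version would be false.
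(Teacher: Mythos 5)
Your argument is correct and follows essentially the same route as the paper's: both translate the sign pattern via~\eqref{eq:def_sn}--\eqref{eq:signs_to_ineq_2} into a chain of inequalities between partial sums and $\tau$, and contradict it with the harmonic inequality $1/m>1/(m+1)+1/(m+2)$ (valid for $m\geq 2$), then deduce the bound $2/(n+1)$ from~\eqref{eq:signs_to_ineq_2}. Your explicit treatment of the boundary configuration $s_1=-1$, $s_2=s_3=s_4=+1$ is exactly the case the paper dismisses implicitly by restricting to $n\geq 2$, so the two proofs coincide in substance.
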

\begin{proof}
The definition of the greedy process and the fact that the harmonic series diverges show that the sequence
$(s_n)_{n}$ contains infinitely many changes of sign. In particular, to prove the first assertion of the
proposition it suffices to show that one cannot have
\begin{equation*}
-s_n
=s_{n+1}
=s_{n+2}
=s_{n+3}
=1
\quad \text{or}\quad
-s_n
=s_{n+1}
=s_{n+2}
=s_{n+3}
=-1
\end{equation*}
for $n$ large enough. Indeed, if the former equality is satisfied, then by~\eqref{eq:signs_to_ineq} we
have
\begin{equation*}
\sigma_{n}< \sigma_{n+1}< \sigma_{n+2}\leq \tau < \sigma_{n-1},
\end{equation*}
whence
\begin{equation*}
1/n = \sigma_{n-1}-\sigma_n > \sigma_{n+2}-\sigma_n = 1/(n+2) + 1/(n+1),
\end{equation*}
and this is impossible for $n\geq 2$. Analogously, one excludes the second case.
The bound for $\sigma_n-\tau$, and thus the convergence of the series, follows immediately
from~\eqref{eq:signs_to_ineq_2}.
\end{proof}
Since $|\sigma_{n+1}-\sigma_{n}|=\frac{1}{n+1}$ for all $n$, one cannot significantly improve the
inequality of Proposition~\ref{prop:A1n} for all large enough $n$. However, the signs patterns exhibited
by Theorem~\ref{thm:blocks} allows us to draw stronger conclusions for arbitrarily large $n$. The
following lemma allows one to control the sum $\sum_{m=n+1}^{n'} s_m/m$ when $s_m$ is given by the
$\pm 1$-valued Thue--Morse sequence $\eps_n := (-1)^{t_n}$.  Before stating it, we define the function
$g_k(x)$ for $k\in\Z_{\geq 0}$ and $x>0$ as
\begin{equation}\label{eq:defin_g_k}
g_{k}(x):=\sum_{0\leq \ell< 2^k}\frac{\eps_\ell}{x+\ell}.
\end{equation}
This function, which could also be defined as a $2^{k-1}$-th iteration of the operator $(\Delta_\alpha
f)(x):= f(x) - f(x+\alpha)$ on the function $g_0(x):=\frac{1}{x}$ (cf.~Lemma~\ref{lem:iteration_g_k} below),
will play an important role in the proofs of our results.
\begin{lem}\label{lem:thm_to_thm}
Let $r=\sum_{0\leq j\leq q}2^{h_j}>0$ with $h_0>h_1>\dots> h_q$. Then, as $x\to\infty$, we have
\begin{equation*}
\sum_{0\leq \ell< r}\frac{\eps_\ell}{x+\ell} \sim (-1)^q \frac{c_{h_q}}{x^{h_q+1}}.
\end{equation*}
In particular, for all $k\in\Z_{\geq 0}$ we have
\begin{equation}\label{eq:thm_to_thm_b}
g_{k}(x) \sim  \frac{c_{k}}{x^{k+1}},
\end{equation}
as $x\to+\infty$. Furthermore, for $x\in\R_{>0}$, the function $g_k(x)$ is positive, decreasing and
satisfies $g_k(x)<c_kx^{-k-1}$.
\end{lem}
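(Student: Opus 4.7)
The plan is to reduce all four assertions to a single integral representation of $g_k$, derived from the self-similar structure of the Thue--Morse sequence. First I would establish the recursion $g_{k+1}(x)=g_k(x)-g_k(x+2^k)$, which is a direct consequence of the block identity $B_{k+1}=(B_k,-B_k)$, equivalently $\eps_{2^k+m}=-\eps_m$ for $0\leq m<2^k$. Introducing the polynomial $P_k(t):=\sum_{0\leq \ell<2^k}\eps_\ell\, t^\ell$, the same block identity gives $P_{k+1}(t)=(1-t^{2^k})P_k(t)$, so by induction $P_k(t)=\prod_{j=0}^{k-1}(1-t^{2^j})$. Substituting the Euler integral $\frac{1}{x+\ell}=\int_0^1 t^{x+\ell-1}\d t$ then yields the compact expression
\[
g_k(x)=\int_0^1 t^{x-1}\prod_{j=0}^{k-1}(1-t^{2^j})\d t.
\]

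From this representation three of the four claims become routine. Positivity and strict monotonicity of $g_k$ on $(0,\infty)$ follow immediately, since each factor of $P_k$ is positive on $(0,1)$ and $g_k'(x)=\int_0^1 t^{x-1}(\log t)P_k(t)\d t<0$. The sharp upper bound comes from the elementary estimate $1-t^{2^j}=(1-t)(1+t+\cdots+t^{2^j-1})\leq 2^j(1-t)$ on $[0,1]$, which gives $P_k(t)\leq 2^{\binom{k}{2}}(1-t)^k$ and hence, by the Beta integral,
\[
g_k(x)\leq 2^{\binom{k}{2}}\int_0^1 t^{x-1}(1-t)^k\d t = \frac{c_k}{x(x+1)\cdots(x+k)} < \frac{c_k}{x^{k+1}}.
\]
Writing $P_k(t)=(1-t)^k Q_k(t)$ with $Q_k(t):=\prod_{j=0}^{k-1}(1+t+\cdots+t^{2^j-1})$ continuous on $[0,1]$ and satisfying $Q_k(1)=2^{\binom{k}{2}}$, the asymptotic $g_k(x)\sim c_k x^{-k-1}$ follows by a Watson-type argument: the main contribution to the integral concentrates near $t=1$, and the bound $Q_k(t)-Q_k(1)=O(1-t)$ produces a remainder of size $O(B(x,k+2))=O(x^{-k-2})$.

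For the general statement about partial sums of length $r=\sum_{j=0}^q 2^{h_j}$, I would decompose along the binary expansion of $r$. Set $r_j:=\sum_{i\leq j}2^{h_i}$ and $r_{-1}:=0$. Every $\ell\in[r_{j-1},r_j)$ factors as $\ell=r_{j-1}+m$ with $m\in[0,2^{h_j})$; since $2^{h_j}\leq 2^{h_{j-1}}$, the binary supports of $r_{j-1}$ and $m$ are disjoint, so the digit sum splits as $s_2(\ell)=j+s_2(m)$, giving $\eps_\ell=(-1)^j\eps_m$. Partitioning the sum along these blocks,
\[
\sum_{0\leq \ell<r}\frac{\eps_\ell}{x+\ell}=\sum_{j=0}^{q}(-1)^j g_{h_j}(x+r_{j-1}).
\]
By the asymptotic for $g_k$ established above, the $j$-th term decays like $x^{-h_j-1}$, so the slowest-decaying, and hence dominant, term is $j=q$, which contributes the claimed $(-1)^q c_{h_q}x^{-h_q-1}$ and overwhelms the remaining terms (which are of order $x^{-h_{q-1}-1}=o(x^{-h_q-1})$).

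The main obstacle is extracting the correct numerical constant $c_k=2^{\binom{k}{2}}k!$ with the right amount of cancellation, and the integral representation does this automatically: the Thue--Morse sign pattern collapses into the clean factorization $\prod_{j=0}^{k-1}(1-t^{2^j})=(1-t)^k Q_k(t)$, which cleanly separates the $(1-t)^k$ factor responsible for the power $x^{-k-1}$ from the bounded factor $Q_k$ whose value at $t=1$ is exactly $2^{\binom{k}{2}}$, while the upper bound $Q_k(t)\leq 2^{\binom{k}{2}}$ on all of $[0,1]$ simultaneously yields the sharp non-asymptotic inequality. Once this machinery is in place, the combinatorial identity $\eps_{r_{j-1}+m}=(-1)^j\eps_m$ used in the final step is pure bookkeeping on binary digits.
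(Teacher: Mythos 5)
Your proof is correct, and it takes a genuinely different route from the paper in its analytic core. The combinatorial half is the same: your block decomposition $\sum_{0\leq \ell<r}\eps_\ell/(x+\ell)=\sum_{j=0}^q(-1)^jg_{h_j}(x+r_{j-1})$, justified by disjointness of binary supports, is exactly the paper's Lemma~\ref{lem:A1n} (proved there by induction via the folding property~\eqref{eq:folding}), and the passage from it to the asymptotic for general $r$ is the same domination argument as in Corollary~\ref{cor:asymp}. Where you diverge is in the treatment of $g_k$ itself: the paper writes $g_k=(\Delta_{2^{k-1}}\circ\cdots\circ\Delta_1)g_0$ and represents it as an iterated integral of $(-1)^kg_0^{(k)}$ over a box of measure $2^{\binom{k}{2}}$ (Lemmas~\ref{lem:iteration_g_k} and~\ref{lem:g_signs}), getting two-sided bounds for all derivatives, and proves $g_k(x)=c_kx^{-k-1}+O(x^{-k-2})$ by an induction carrying an analytic remainder $\phi_k(1/x)$; you instead use the Beta-type representation $g_k(x)=\int_0^1 t^{x-1}\prod_{j=0}^{k-1}(1-t^{2^j})\d t$, obtained from the Euler integral and the factorization of $P_k$ (which the paper only exploits later, in Proposition~\ref{prop:A2n}). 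Your single formula delivers positivity, strict monotonicity, the sharp bound via $P_k(t)\leq 2^{\binom{k}{2}}(1-t)^k$ and $B(x,k+1)$, and the asymptotic with error $O(x^{-k-2})$ by a Watson-type expansion at $t=1$ — arguably cleaner and more self-contained for this lemma. What the paper's heavier machinery buys is reuse: the derivative bounds and the lower bound $(-1)^mg_k^{(m)}(x)>2^{\binom{k}{2}}(m+k)!(x+2^k)^{-m-k-1}$ of Lemma~\ref{lem:g_signs} are needed again in Lemmas~\ref{lem:A5n}--\ref{lem:A7n}, in Lemma~\ref{lem:UX}, and in the measure estimates of Section~5, whereas your representation as written only serves the present statement (though inserting powers of $\log(1/t)$, or minorizing $1+t+\cdots+t^{2^j-1}\geq 2^jt^{2^j-1}$, would recover such bounds too). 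One cosmetic point, common to your write-up and to the paper's own statement: for $k=0$ one has $g_0(x)=1/x=c_0x^{-1}$ exactly, so the strict inequality $g_k(x)<c_kx^{-k-1}$ should be read as holding for $k\geq1$ (your Beta-function step gives strictness precisely in that range).
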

We postpone the proof of this lemma to Section~\ref{sec:proof_blocks} (cf.~Corollary~\ref{cor:asymp} and
Lemma~\ref{lem:g_signs}). We now show how this lemma, in combination with Theorem~\ref{thm:blocks}, 
implies Theorem~\ref{thm:main_theorem}. Before moving to the proof, we record the ``folding'' property of
the Thue--Morse sequence in terms of $\eps_n$:
\begin{equation}\label{eq:folding}
\eps_{n+m} = \eps_n\eps_m
\qquad
\forall n,m\geq 0
\quad
\text{with}
\quad
n < 2^h
\end{equation}
where $h$ is the greatest nonnegative integer $\nu$ such that $2^\nu$ divides $m$. In particular,
\begin{equation}\label{eq:folding2}
\eps_{n+2^h} = -\eps_n
\qquad
\forall n,m\geq 0
\quad
\text{with}
\quad
n < 2^h
\end{equation}
(see~\cite{DekkingMendesFrancePoortenI} and~\cite[Section~2.2]{Lothaire1}).
\begin{proof}[Proof of Theorem~\ref{thm:main_theorem}]
We only deal with the case $k\geq 1$, the case $k=0$ being very similar. Also, we postpone the proof that
the sets $X_1,X_2,\dots$ are pairwise disjoint, countable and with $X_k\cap\overline \Q=\emptyset$ for
all $k$ to Section~\ref{sec:exceptional}.

First, assume that $\tau\notin Y_{k+1}$. By Theorem~\ref{thm:blocks}, there exist $N,j\in\N$
such that
\begin{equation}\label{eq:tail}
(s_n)_{n\geq N}=(\kappa_j\cdot B_{k_j},\kappa_{j+1}\cdot B_{k_{j+1}},\kappa_{j+2}\cdot B_{k_{j+2}},\dots)
\end{equation}
with $k_{i}\geq k+2$ for all $i\geq j$. By~\eqref{eq:folding}, for $h,k\geq 0$ we have
\begin{equation*}
B_{k+h}=(\eps_0\cdot B_{k+2},\eps_1\cdot B_{k+2},\dots,\eps_h \cdot B_{k+2}).
\end{equation*}
Hence,~\eqref{eq:tail} can be rewritten as
\begin{equation*}
(s_n)_{n\geq N}=(\delta_0\cdot B_{k+2},\delta_1 \cdot B_{k+2},\delta_2\cdot B_{k+2},\dots)
\end{equation*}
where $(\delta_m)_m$ is a sequence with values in $\{\pm 1\}$.

Now let $M=N+2^{k+2}m$ with $m\geq 0$, so that $s_{n+M}=\delta_m\eps_{n}$ for $0\leq n< 2^{k+2}$. In
particular, $s_{M+2^{k+1}}=-s_M$ and so by~\eqref{eq:signs_to_ineq_2}  and~\eqref{eq:thm_to_thm_b} we
have
\begin{equation*}
|\sigma_{M-1}-\tau|
\leq |\sigma_{M-1+2^{k+1}}-\sigma_{M-1}|
=    \Bigg|\sum_{0\leq \ell<2^{k+1} }\frac{\delta_m\eps_\ell}{M+1+\ell} \Bigg|
\sim \frac{c_{k+1}}{M^{k+2}},
\end{equation*}
as $m\to\infty$ (i.e., $M\to\infty$).
It follows that, writing $0\leq r<2^{k+2}$ as in Lemma~\ref{lem:thm_to_thm}, we have
\begin{equation*}
M^{k+1}(\sigma_{M-1+r}-\tau)
= M^{k+1}(\sigma_{M-1}-\tau)+M^{k+1}\sum_{0\leq \ell<r}\frac{\delta_m\eps_\ell}{M+\ell}
= o(1)+ \frac{(-1)^q\delta_mc_{h_q}}{M^{h_q-k}}(1+o(1))
\end{equation*}
as $m\to\infty$, whence
\begin{equation*}
\lim_{m\to\infty}\delta_m (N+2^{k+2}m)^{k+1}(\sigma_{N+2^{k+2}m-1+r}-\tau)
=\begin{cases}
  0   , & \text{if } r\in\{0,2^{k+1}\};\\
  c_k , & \text{if } r=2^k            ;\\
- c_k , & \text{if } r=2^k+2^{k+1}   ;\\
(-1)^q\infty , &\text{if }r\notin\{0,2^k,2^{k+1},2^k+2^{k+1}\}.
\end{cases}
\end{equation*}
The fact that $S_{k+1}'(\tau)=\{0,\pm c_k,\pm \infty\}$ then follows.
\smallskip

Now assume that $\tau\in X_{k+1}$. By Theorem~\ref{thm:blocks}, there exists $N\in\N$ such
that
\begin{equation*}
(s_{n})_{n\geq N}=(B_{k+1},B_{k+1},B_{k+1},\dots).
\end{equation*}
Since the sets $X_h$ are pairwise disjoint, we have that $\tau\notin Y_k$. In particular, the above
argument gives
\begin{align*}
\lim_{m\to\infty}(N+2^{k+1}m)^{k+1}(\sigma_{N+2^{k+1}m-1+r}-\tau)
= (-1)^q\infty
\end{align*}
for $0<r<2^{k+1}$, $r\neq 2^k$. We shall now show that the above limit is $c_k/2$ when $r=0$.
Similarly one shows that the limit is $-c_k/2$ when $r=2^k$.\\
By grouping the sums in the $B_{k+1}$ blocks, one sees that
\begin{equation*}
\tau
=\sum_{n=1}^{\infty}\frac{s_n}{n}
=\sigma_{N-1}+\sum_{h=0}^{\infty}\sum_{\ell=0}^{2^{k+1}-1}\frac{\eps_\ell}{N+h2^{k+1}+\ell}
=\sigma_{N-1}+\sum_{h=0}^{\infty}g_{k+1}(N+h2^{k+1}).
\end{equation*}
It follows  that
\begin{equation}\label{eq:case_0}
\tau-\sigma_{N+2^{k+1}m-1}=\sum_{h=m}^{\infty}g_{k+1}(N+h2^{k+1}).
\end{equation}
By Lemma~\ref{lem:thm_to_thm} $g_{k+1}$ is positive and decreasing. Thus by~\eqref{eq:thm_to_thm_b} as
$x\to+\infty$ one has
\begin{align*}
\sum_{h\geq 0}g_{k+1}(x+h2^{k+1})&=\int_0^{+\infty}g_{k+1}(x+u2^{k+1})\d u + O(x^{-k-2})\\[-0.5em]
&=\frac{1}{2^{k+1}}\int_{x}^{+\infty}g_{k+1}(u)\d u + O(x^{-k-2})\\
&=\frac{1}{2^{k+1}}\int_{x}^{+\infty}\Big(\frac{c_{k+1}}{u^{k+2}}+ o(u^{-k-2})\Big)\d u + O(x^{-k-2})\\
&=\frac{c_{k+1}}{2^{k+1}(k+1)x^{k+1}}+ o(x^{-k-1})
 =\frac{c_k}{2x^{k+1}}+ o(x^{-k-1}),
\end{align*}
since $(k+1)2^kc_k=c_{k+1}$. Thus, by~\eqref{eq:case_0} we obtain
\begin{align*}
\lim_{m\to\infty}(N+2^{k+1}m)^{k+1}(\sigma_{N+2^{k+1}m-1}-\tau) = c_k/2,
\end{align*}
as claimed. Collecting the above results one obtains $S_{k+1}'(\tau)=\{\pm c_k/2,\pm \infty\}$ for
$\tau\in X_{k+1}$.
\smallskip

Finally, the case $\tau\in Y_k$ of~\eqref{eq:main_theorem} can be proven easily along the same lines.
\end{proof}
\begin{proof}[Proof of the Corollary~\ref{cor:inequalities}]
Since $\tau\notin Y_k$, then by Theorem~\ref{thm:blocks} one has that there exists $N\in\N$ such that the
sequence $(s_{n})_{n\geq N}$ can be written as a concatenation of blocks $\pm B_{k+1}=\pm(B_k,-B_{k})$.
It follows that the sequence $(s_{n})_{n}$ contains infinitely many blocks $(B_k,-B_k)$ and infinitely
many blocks $(-B_k,B_k)$. Now if $(s_r)_{m\leq r< m+2^{k+1}}=(B_k,-B_k)$ then
applying~\eqref{eq:signs_to_ineq_2} with $n=m-1$ and $n'=m+2^k-1$, we obtain
\begin{equation*}
0\leq (\tau-\sigma_{m-1})\leq \bigg| \sum_{r=m}^{m+2^k-1} \eps_r/r\bigg|=g_{k}(m)<c_k m^{-k-1}<c_k(m-1)^{-k-1},
\end{equation*}
where in the third inequality we used Lemma~\ref{lem:thm_to_thm}. Also, by
Proposition~\ref{prop:equality}, the first inequality is strict if $m$ is large enough. It follows
that~\eqref{eq:ineq2} is satisfied for infinitely many $m$ and one proves in the same way that the same
holds for~\eqref{eq:ineq1}.
\end{proof}

\section{Thue--Morse sums and the proof of Theorem~\ref{thm:blocks}}
\label{sec:proof_blocks}
\subsection{The function $g_k(x)$ and other Thue--Morse sums}
We recall that the function $g_k(x)$ was defined in~\eqref{eq:defin_g_k} as a certain logarithmic
average of the Thue--Morse sequence, and that we defined the operator $\Delta_\alpha f$ as
$(\Delta_\alpha f)(x):= f(x) - f(x+\alpha)$.
\begin{lem}\label{lem:iteration_g_k}
For all $k\in\N$ and $x>0$ we have
\begin{equation}\label{eq:A2n}
g_k(x) = (\Delta_{2^{k-1}}\circ\dots\circ\Delta_{2}\circ\Delta_{1})g_0(x)
\end{equation}
or, equivalently,
\begin{equation}\label{eq:A3n}
g_k(x)
= (-1)^k\int_{0}^{2^{k-2}}\dots \int_{0}^{2^0} g^{(k)}_0(x+u_1+\cdots+u_k) \d u_k\cdots\d u_2\d u_1.
\end{equation}
\end{lem}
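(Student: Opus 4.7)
The plan is to prove both identities by induction on $k$, with the folding property of the Thue--Morse sequence supplying the arithmetic for~\eqref{eq:A2n} and the fundamental theorem of calculus supplying the analytic manipulation for~\eqref{eq:A3n}.

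For~\eqref{eq:A2n}, I would first establish the one-step recursion
\begin{equation*}
g_k(x) = g_{k-1}(x) - g_{k-1}(x+2^{k-1}) = (\Delta_{2^{k-1}}g_{k-1})(x).
\end{equation*}
The idea is to split the defining sum of $g_k(x)$ at $\ell = 2^{k-1}$, reindex the upper half by $\ell = 2^{k-1}+m$ with $0\leq m<2^{k-1}$, and invoke the folding identity~\eqref{eq:folding2} in the form $\eps_{2^{k-1}+m} = -\eps_m$. The two resulting sums are precisely $g_{k-1}(x)$ and $-g_{k-1}(x+2^{k-1})$. Since $g_0(x)=1/x$, iterating this recursion $k$ times yields~\eqref{eq:A2n}.

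For~\eqref{eq:A3n}, I would start from the elementary representation $(\Delta_\alpha f)(x) = -\int_0^\alpha f'(x+u)\,\d u$, valid for any smooth $f$ and $\alpha>0$. Applying this $k$ times to~\eqref{eq:A2n}, one operator at a time, and commuting each successive $\Delta_{2^{j-1}}$ past the integrals already present (which is legitimate because $\Delta_\alpha$ acts only on the $x$-variable and $g_0^{(j)}$ is smooth and integrable for $x>0$), produces a fresh integral $\int_0^{2^{j-1}}\d u_j$ and one additional derivative plus a sign at each step. After $k$ iterations one obtains the nested $k$-fold integral of $g_0^{(k)}(x+u_1+\cdots+u_k)$ with overall sign $(-1)^k$, as claimed.

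The argument is essentially mechanical, so the only real obstacle is careful bookkeeping of signs and of the pairing between integration variables $u_j$ and the corresponding upper limits $2^{j-1}$. No deeper input is needed beyond the folding identity~\eqref{eq:folding2} and the fundamental theorem of calculus.
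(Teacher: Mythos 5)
Your proof is correct and essentially identical to the paper's: the paper also proves~\eqref{eq:A2n} by induction via the split of the sum at $2^{k-1}$ together with the folding identity~\eqref{eq:folding2}, and then deduces~\eqref{eq:A3n} from~\eqref{eq:A2n} using the smoothness of $g_0$, exactly the representation $(\Delta_\alpha f)(x)=-\int_0^\alpha f'(x+u)\d u$ that you spell out. Your write-up merely makes explicit the iteration step that the paper dismisses as immediate, which is fine.
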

\begin{proof}
Since $g_0$ admits derivatives of any order for $x>0$, Equation~\eqref{eq:A3n} follows immediately
from~\eqref{eq:A2n}. Now we prove~\eqref{eq:A2n} by induction on $k$. For $k=1$, the equality is
immediate. Now let $k\geq 2$ and suppose the claim is true for $k-1$. Then we split the range of
summation $[0,2^k)$ in the definition of $g_k$ into $[0,2^{k-1}) \cup [2^{k-1},2^k)$ and shift the variable
in the second range by $2^{k-1}$. We get
\begin{align*}
g_k(x)
&= \sum_{0\leq \ell< 2^k}\frac{\eps_\ell}{x+\ell}
 = \sum_{0\leq \ell< 2^{k-1}}\frac{\eps_\ell}{x+\ell} + \sum_{0\leq \ell< 2^{k-1}}\frac{\eps_{\ell+2^{k-1}}}{x+2^{k-1}+\ell}
 = g_{k-1}(x)-g_{k-1}(x+2^{k-1}),
\end{align*}
since $\eps_{\ell+2^{h-1}} = -\eps_\ell$ by the folding property~\eqref{eq:folding2}. Thus, by the
inductive hypothesis we have
\begin{equation*}
g_k(x)=\Delta_{2^{k-1}} g_{k-1}(x)= (\Delta_{2^{k-1}}\circ\dots\circ\Delta_{2}\circ\Delta_{1})g_0(x),
\end{equation*}
as claimed.
\end{proof}
The following lemma shows that any logarithmic average of the Thue--Morse sequence can be expressed as a
combination of the functions $g_k(x)$.
\begin{lem}\label{lem:A1n}
Let $r := \sum_{0\leq j\leq q}2^{h_j} > 0$ with $h_0>h_1>\dots> h_q$, and let $r_j:=\sum_{i<j}2^{h_i}$.
Then
\begin{equation*}
\sum_{0\leq \ell< r}\frac{\eps_\ell}{x+\ell} = \sum_{0\leq j\leq q}(-1)^{j}g_{h_j}(x+r_j).
\end{equation*}
\end{lem}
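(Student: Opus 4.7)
The plan is to split the range $[0,r)$ into the $q+1$ dyadic blocks induced by the binary expansion of $r$, and to use the folding property of the Thue--Morse sequence to convert each piece into a shift of some $g_{h_j}$.

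First I would observe that the intervals $[r_j,r_{j+1})=[r_j,r_j+2^{h_j})$ for $0\le j\le q$ partition $[0,r)$, since $r_{j+1}-r_j=2^{h_j}$ and $r_{q+1}=r$. Therefore
\begin{equation*}
\sum_{0\le \ell<r}\frac{\eps_\ell}{x+\ell}
=\sum_{j=0}^{q}\sum_{0\le \ell<2^{h_j}}\frac{\eps_{r_j+\ell}}{x+r_j+\ell}.
\end{equation*}

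Next I would apply the folding property~\eqref{eq:folding} inside each inner sum. For $j\ge 1$ the integer $r_j=2^{h_0}+\dots+2^{h_{j-1}}$ has $2$-adic valuation $h_{j-1}$, and since $h_{j-1}\ge h_j+1>h_j$, any $\ell<2^{h_j}$ satisfies $\ell<2^{h_{j-1}}$. Hence~\eqref{eq:folding} (with $n=\ell$ and $m=r_j$) gives $\eps_{r_j+\ell}=\eps_\ell\,\eps_{r_j}$. For $j=0$ this is trivial since $r_0=0$. Pulling the constant $\eps_{r_j}$ out converts the inner sum into $\eps_{r_j}\,g_{h_j}(x+r_j)$ by the definition~\eqref{eq:defin_g_k}.

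Finally I would identify the sign $\eps_{r_j}$ with $(-1)^j$. Since $r_j$ is the sum of $j$ distinct powers of $2$, its binary expansion contains exactly $j$ ones, so $t_{r_j}\equiv j\pmod 2$ and consequently $\eps_{r_j}=(-1)^{t_{r_j}}=(-1)^{j}$. Combining, we obtain
\begin{equation*}
\sum_{0\le \ell<r}\frac{\eps_\ell}{x+\ell}=\sum_{j=0}^{q}(-1)^j g_{h_j}(x+r_j),
\end{equation*}
which is the claimed identity. No step is really an obstacle here; the only subtle point is verifying that $\ell<2^{h_j}$ falls within the range of validity of~\eqref{eq:folding} applied to $m=r_j$, which is immediate from $h_{j-1}>h_j$.
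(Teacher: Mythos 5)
Your proof is correct and uses essentially the same idea as the paper: decompose $[0,r)$ into the dyadic blocks coming from the binary expansion of $r$ and apply the folding property $\eps_{r_j+\ell}=\eps_\ell\eps_{r_j}=(-1)^j\eps_\ell$ on each block. The only difference is cosmetic: the paper peels off the last block and inducts on $q$, whereas you unroll that induction into a single simultaneous partition, which changes nothing of substance.
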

\begin{proof}
The proof is by induction on $q$. The case $q=0$ holds true by definition. Let $q\geq 1$, and suppose the
claim is true for $q-1$. The definition of $r$ shows that $r=r_q + 2^{h_q}$, so that, splitting the range
of summation $[0,r)$ into $[0,r_q)\cup[r_q,r_q+2^{h_q})$ and shifting the variable in the second range by
$r_q$, we get
\begin{equation*}
\sum_{0\leq \ell< r}\frac{\eps_\ell}{x+\ell}
= \sum_{0\leq \ell< r_q}\frac{\eps_\ell}{x+\ell} + \sum_{0\leq \ell< 2^{h_q}}\frac{\eps_{\ell+r_q}}{x+r_q+\ell}.
\end{equation*}
The dyadic representation of $r_q$ contains $q-1$ nonzero digits, hence the inductive hypothesis may be
applied to the first term. Moreover, in the second sum $\ell<2^{h_q}< 2^{h_{q-1}}\leq r_q$, thus $\eps_{\ell+r_q}
= \eps_\ell\eps_{r_q} = (-1)^q\eps_\ell$, so that
\begin{equation*}
\sum_{0\leq \ell< r}\frac{\eps_\ell}{x+\ell}
= \sum_{0\leq j\leq q-1}(-1)^{j}g_{h_j}(x+r_j) + (-1)^q\sum_{0\leq \ell< 2^{h_q}}\frac{\eps_\ell}{x+r_q+\ell}.
\end{equation*}
By definition the second sum on the right is $g_{h_q}(x+r_q)$ and the proof is complete.
\end{proof}
We now describe the asymptotic behavior of $g_k(x)$.
\begin{lem}
Let $k\geq 0$. Then, as $x\to\infty$, we have $g_k(x)= \frac{c_k}{x^{k+1}}+O_k(x^{-k-2})$, where $c_k$ is
as in Theorem~\ref{thm:main_theorem}.
\end{lem}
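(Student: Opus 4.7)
The plan is to use Lemma~\ref{lem:iteration_g_k} directly: the iterated integral representation~\eqref{eq:A3n} reduces the problem to a routine Taylor expansion of $g_0^{(k)}$, since $g_0(x)=1/x$ is explicit.

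First I would record that $g_0^{(k)}(y)=(-1)^k k!\,y^{-k-1}$, so that~\eqref{eq:A3n} (with upper limits $2^{k-1},2^{k-2},\ldots,2^0$ as dictated by the operator sequence in~\eqref{eq:A2n}) gives
\begin{equation*}
g_k(x) = k!\int_{0}^{2^{k-1}}\!\!\int_{0}^{2^{k-2}}\!\cdots\int_{0}^{2^0}\frac{\d u_1\cdots\d u_k}{(x+u_1+\cdots+u_k)^{k+1}}.
\end{equation*}
The total volume of the product domain is $\prod_{j=0}^{k-1}2^j = 2^{\binom{k}{2}}$, which is exactly the factor needed so that $k!\cdot 2^{\binom{k}{2}}=c_k$.

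Next I would expand the integrand around $x$. For $x\geq 1$ and any $s\in[0,2^k-1]$, the mean value theorem (or one integration by parts) yields
\begin{equation*}
\frac{1}{(x+s)^{k+1}} = \frac{1}{x^{k+1}} + O_k\!\left(\frac{s}{x^{k+2}}\right),
\end{equation*}
with an implied constant depending only on $k$. Since each $u_j$ lies in a bounded interval of length at most $2^{k-1}$, the sum $s=u_1+\cdots+u_k$ is bounded by $2^k-1$, so the error estimate is uniform over the domain.

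Integrating this expansion over the product domain, the main term contributes $k!\cdot 2^{\binom{k}{2}}\cdot x^{-k-1}=c_k x^{-k-1}$ and the error integrates to $O_k(x^{-k-2})$, since the domain has volume depending only on $k$. This yields $g_k(x)=c_k x^{-k-1}+O_k(x^{-k-2})$ as $x\to\infty$, as claimed. There is no real obstacle here; the only thing to keep straight is bookkeeping the sign $(-1)^k\cdot(-1)^k=1$ coming from~\eqref{eq:A3n} combined with the sign of $g_0^{(k)}$, and matching the product of interval lengths to the constant $2^{\binom{k}{2}}$ appearing in $c_k$.
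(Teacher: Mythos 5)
Your proof is correct, but it takes a different route from the paper's proof of this particular lemma. The paper argues by induction on $k$, showing the stronger structural statement that $g_k(x)=c_k x^{-k-1}+\phi_k(1/x)$ with $\phi_k$ analytic near $0$ and vanishing to order at least $k+2$, the induction step coming from $g_{k+1}(x)=g_k(x)-g_k(x+2^k)$ and the identity $(k+1)2^k c_k=c_{k+1}$. You instead invoke the iterated-integral representation of Lemma~\ref{lem:iteration_g_k} (correctly reading the upper limits as $2^{k-1},\dots,2^0$, so that the domain has volume $2^{\binom{k}{2}}$ --- the display \eqref{eq:A3n} has a slip in its outermost limit), compute $(-1)^k g_0^{(k)}(y)=k!\,y^{-k-1}$, and Taylor-expand the integrand, using that $u_1+\cdots+u_k\leq 2^k-1$ is bounded in terms of $k$ alone; the bookkeeping $k!\cdot 2^{\binom{k}{2}}=c_k$ and the error bound $O_k(x^{-k-2})$ are both right. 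This is essentially the same device the paper uses to prove Lemma~\ref{lem:g_signs}, whose case $m=0$ already sandwiches $g_k(x)$ between $c_k(x+2^k)^{-k-1}$ and $c_k x^{-k-1}$ and hence also yields the asymptotic; so your argument is a legitimate, arguably more economical alternative, while the paper's induction delivers the extra information that the remainder is analytic in $1/x$ (information not actually needed elsewhere). The only point worth adding is that the representation of Lemma~\ref{lem:iteration_g_k} is stated for $k\geq 1$, so the case $k=0$ should be noted separately, where $g_0(x)=1/x=c_0x^{-1}$ exactly and there is nothing to prove.
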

\begin{proof}
We show that $g_k(x)=c_k/x^{k+1}+\phi_k(1/x)$ with $\phi_k$ analytic in a neighborhood of $0$ and with a
zero of order at least $ k+2$ at $x=0$. This is obvious for $k=0$. Suppose it is true for $k\geq 0$. We
have
\begin{equation*}
g_{k+1}(x)
= \frac{c_k}{x^{k+1}}+\phi_k(1/x)-\frac{c_k}{(x+2^k)^{k+1}}-\phi_k(1/(x+2^k))
= \frac{(k+1) 2^k c_k}{x^{k+2} }+\phi_{k+1}(1/x)
\end{equation*}
with
\begin{equation*}
\phi_{k+1}(x)=\phi_k(x)-\phi_k(x(1+ 2^k x)^{-1}) - c_k x^{k+1} \left((1+ 2^k x)^{-k-1} - 1 + (k + 1)2^k x\right).
\end{equation*}
Clearly $\phi_{k+1}(x)$ is analytic in a neighborhood of $0$ and has a zero of order at least $ k+3$ at
$x=0$. Moreover, by definition, $(k+1) 2^k c_k=2^{\frac{k(k-1)}2+k} (k+1)!=c_{k+1}$, as desired.
\end{proof}
\begin{cor}\label{cor:asymp}
With the same notation as in Lemma~\ref{lem:A1n}, as $x\to\infty$ we have
\begin{equation*}
\sum_{0\leq \ell< r}\frac{\eps_\ell}{x+\ell} \sim (-1)^{q}\frac{c_{h_q}}{x^{h_q+1}}.
\end{equation*}
\end{cor}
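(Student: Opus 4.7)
The plan is to combine the explicit decomposition provided by Lemma~\ref{lem:A1n} with the asymptotic expansion of $g_k(x)$ just established. Lemma~\ref{lem:A1n} rewrites the partial sum as
\begin{equation*}
\sum_{0\leq \ell< r}\frac{\eps_\ell}{x+\ell} = \sum_{0\leq j\leq q}(-1)^{j}g_{h_j}(x+r_j),
\end{equation*}
so the question reduces to identifying the leading contribution on the right as $x\to\infty$. Since $h_0>h_1>\dots>h_q$, the slowest-decaying term will be the one with index $j=q$, and the claim is essentially that this term dictates the asymptotic behavior.

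Next, I would apply the preceding lemma, which gives $g_{h_j}(x+r_j)=c_{h_j}(x+r_j)^{-h_j-1}+O_{h_j}((x+r_j)^{-h_j-2})$ for each $j$. For $j<q$ one has $h_j\geq h_q+1$, and $r_j$ is a bounded constant depending only on $r$, so $g_{h_j}(x+r_j)=O(x^{-h_j-1})=O(x^{-h_q-2})$. For $j=q$ the binomial expansion $(x+r_q)^{-h_q-1}=x^{-h_q-1}(1+O(1/x))$ gives $g_{h_q}(x+r_q)=c_{h_q}x^{-h_q-1}+O(x^{-h_q-2})$.

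Substituting these expansions into the decomposition, the $j=q$ term contributes $(-1)^q c_{h_q}x^{-h_q-1}$ plus a lower-order error, while the $q$ terms with $j<q$ together contribute only $O(x^{-h_q-2})$. Hence
\begin{equation*}
\sum_{0\leq \ell<r}\frac{\eps_\ell}{x+\ell} = (-1)^q\frac{c_{h_q}}{x^{h_q+1}}+O_r(x^{-h_q-2}),
\end{equation*}
which yields the asymptotic equivalence. There is no substantive obstacle here: the entire argument is bookkeeping of the orders of the error terms, the only mild point being to keep track that the constants $r_j$ are bounded in terms of $r$ so that shifting the argument by $r_j$ does not affect the leading asymptotic.
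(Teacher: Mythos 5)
Your proposal is correct and is essentially the paper's own argument: the corollary is stated as an immediate consequence of the decomposition in Lemma~\ref{lem:A1n} together with the preceding asymptotic expansion $g_k(x)=c_k x^{-k-1}+O_k(x^{-k-2})$, with the $j=q$ term dominating because $h_q$ is the smallest exponent. Your bookkeeping of the error terms and of the bounded shifts $r_j$ matches exactly what the paper leaves implicit.
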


\subsection{Inequalities for $g_k(x)$}
\begin{lem}\label{lem:g_signs}
For all $m,k\geq 0$ and all $x>0$ we have
\begin{equation*}
2^{\binom{k}2}\frac{(m+k)!}{(x+2^k)^{m+k+1}}<(-1)^mg^{(m)}_k(x)<2^{\binom{k}2}\frac{(m+k)!}{x^{m+k+1}}.
\end{equation*}
In particular, for all $k\geq 0$, $g_k(x)$ is positive and decreasing for $x>0$.
\end{lem}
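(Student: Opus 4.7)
The plan is to start from the integral representation of $g_k$ given in Lemma~\ref{lem:iteration_g_k}. Differentiating $m$ times under the integral sign yields
\begin{equation*}
g_k^{(m)}(x) = (-1)^k \int_0^{2^{k-1}}\!\!\int_0^{2^{k-2}}\!\!\cdots\int_0^{1} g_0^{(k+m)}(x+u_1+\cdots+u_k)\,\d u_k\cdots\d u_1,
\end{equation*}
which is justified by the smoothness of $g_0(y)=1/y$ on $(0,\infty)$ and the compactness of the domain. This is the essential identity from which everything follows.

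Next I would substitute the explicit derivative $g_0^{(k+m)}(y) = (-1)^{k+m}(k+m)!\,y^{-(k+m+1)}$. The sign factors $(-1)^k$ and $(-1)^{k+m}$ combine to $(-1)^m$, giving
\begin{equation*}
(-1)^m g_k^{(m)}(x) = (k+m)!\int_0^{2^{k-1}}\!\!\cdots\int_0^{1} \frac{\d u_k\cdots\d u_1}{(x+u_1+\cdots+u_k)^{k+m+1}}.
\end{equation*}
Because the integrand is strictly positive on the integration domain, the integral is positive, so the strict positivity of $(-1)^m g_k^{(m)}(x)$ is immediate; in particular $g_k(x)>0$ and $g_k'(x)<0$ for $x>0$, which gives the "positive and decreasing" clause.

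For the two-sided bound I would exploit the fact that the integrand is strictly decreasing in each $u_j$. The volume of the integration region is
\begin{equation*}
\prod_{j=0}^{k-1} 2^j = 2^{0+1+\cdots+(k-1)} = 2^{\binom{k}{2}}.
\end{equation*}
Replacing the sum $u_1+\cdots+u_k$ by its minimum value $0$ yields the strict upper bound $2^{\binom{k}{2}}(k+m)!/x^{k+m+1}$, and replacing it by its maximum value $1+2+\cdots+2^{k-1}=2^k-1$ yields the strict lower bound $2^{\binom{k}{2}}(k+m)!/(x+2^k-1)^{k+m+1}$. Since $2^k-1<2^k$, we may weaken the latter to $2^{\binom{k}{2}}(k+m)!/(x+2^k)^{k+m+1}$, matching the statement.

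There is no serious obstacle here; the only point requiring a small amount of attention is the book-keeping of signs (making sure $(-1)^k\cdot(-1)^{k+m}=(-1)^m$) and the observation $\sum_{j=0}^{k-1}2^j = 2^k-1 < 2^k$, which is exactly what lets us write $x+2^k$ rather than $x+2^k-1$ in the denominator of the lower bound.
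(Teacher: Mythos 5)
Your proposal is correct and follows essentially the same route as the paper: both start from the iterated-integral representation of Lemma~\ref{lem:iteration_g_k} applied to $g_k^{(m)}$, use that the domain has measure $2^{\binom{k}{2}}$, and bound the integrand $(-1)^{m+k}g_0^{(m+k)}$ between its values at the endpoints $x$ and $x+2^k-1<x+2^k$. The only cosmetic difference is that the paper dispatches the degenerate case $k=0$ (where the iterated integral is empty) separately, which your argument implicitly requires as well.
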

\begin{proof}
The claim is clear for $k=0$. Suppose $k\geq 1$. The operator $\Delta_\alpha$ commutes with the
differentiation. Thus~\eqref{eq:A2n} and~\eqref{eq:A3n} applied to $g_k^{(m)}$ show that
\begin{equation*}
(-1)^m g^{(m)}_k(x)
= \int_{0}^{2^{k-1}}\int_{0}^{2^{k-2}}\dots \int_{0}^{2^0} (-1)^{m+k}g^{(m+k)}_0(x+u_1+\cdots+u_k) \d u_k\cdots\d u_2\d u_1.
\end{equation*}
The measure of the domain of integration is $2^{\binom{k}2}$ and so the claim follows, since for $x< u<
x+2^{k-1}+\cdots+2^0=x+2^k-1$ one has
\begin{equation*}
\frac{(m+k)!}{(x+2^k)^{m+k+1}}
< (-1)^{m+k}g^{(m+k)}_0(u)
= \frac{(m+k)!}{u^{m+k+1}}
< \frac{(m+k)!}{x^{m+k+1}}.\qedhere
\end{equation*}
\end{proof}
\begin{cor}\label{cor:A2n}
For all $k\geq 0$ and all $x>0$ we have
\begin{equation*}
g_k(x)<g_{k-1}(x).
\end{equation*}
\end{cor}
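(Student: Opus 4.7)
The plan is short because almost everything needed has just been established. The key observation is that Lemma~\ref{lem:iteration_g_k} (and more transparently its proof) gives the one-step recurrence
\begin{equation*}
g_k(x) = g_{k-1}(x) - g_{k-1}(x+2^{k-1}),
\end{equation*}
which is simply the identity $g_k = \Delta_{2^{k-1}} g_{k-1}$. So the difference $g_{k-1}(x) - g_k(x)$ equals $g_{k-1}(x+2^{k-1})$.

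The second ingredient is Lemma~\ref{lem:g_signs}, applied with $m=0$ at the shifted point $x+2^{k-1}>0$: it yields $g_{k-1}(x+2^{k-1})>0$. Combining these two facts gives $g_{k-1}(x) - g_k(x) > 0$, which is exactly the claim.

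The only mild subtlety is the base case: the statement as written starts at $k=0$, but $g_{-1}$ has not been defined, so one should read the corollary as concerning $k\geq 1$ (for $k\geq 1$ both $g_k$ and $g_{k-1}$ are defined on $(0,\infty)$ by~\eqref{eq:defin_g_k} and Lemma~\ref{lem:g_signs} applies). There is no genuine obstacle here; the proof is a direct two-line consequence of results already in hand.
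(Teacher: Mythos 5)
Your proof is correct and is exactly the paper's argument: the recurrence $g_k(x)=g_{k-1}(x)-g_{k-1}(x+2^{k-1})$ combined with the positivity of $g_{k-1}$ from Lemma~\ref{lem:g_signs}. Your remark that the statement should be read for $k\geq 1$ (since $g_{-1}$ is undefined) is a fair observation about a small slip in the corollary's wording, not a gap in the proof.
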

\begin{proof}
We have $g_k(x)=g_{k-1}(x)-g_{k-1}(x+2^{k-1})<g_{k-1}(x)$.
\end{proof}
\begin{lem}\label{lem:A5n}
Let $k\geq 0$. Then for all $x\geq (k+1)2^{k+2}$ we have $g_k(x)< \frac{4}{3}g_k(x+2^k)$.
\end{lem}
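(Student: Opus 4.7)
The plan is to reduce the inequality to a pointwise comparison inside the integral representation of $g_k$. Unfolding the chain of operators in Lemma~\ref{lem:iteration_g_k} and using $g_0^{(k)}(y)=(-1)^k k!/y^{k+1}$ gives
\begin{equation*}
g_k(x) = \int_{D_k} \frac{k!}{(x+u_1+\cdots+u_k)^{k+1}}\, du_1\cdots du_k,
\end{equation*}
where $D_k:=\prod_{i=1}^k [0,2^{k-i}]$ (and $k=0$ is the degenerate case $g_0(x)=1/x$). The desired inequality $g_k(x) < (4/3)\, g_k(x+2^k)$ will therefore follow from the pointwise bound
\begin{equation*}
\left(1+\frac{2^k}{x+s}\right)^{k+1} < \frac{4}{3}
\end{equation*}
for every $s = u_1+\cdots+u_k \in [0, 2^k-1]$. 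Since the left side is decreasing in $s$, it suffices to treat $s=0$, i.e., to prove $(1+2^k/x)^{k+1} < 4/3$.

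Under the hypothesis $x \geq (k+1)2^{k+2}$ one has $2^k/x \leq 1/(4(k+1))$, and the elementary bound $\log(1+y)\leq y$ yields
\begin{equation*}
(k+1)\log\!\left(1 + \frac{2^k}{x}\right) \leq \frac{1}{4} < \log\frac{4}{3},
\end{equation*}
the last strict inequality being the arithmetic fact $e^{1/4} \approx 1.284 < 4/3$. This gives the pointwise bound and completes the proof.

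The main subtlety to be aware of is that a naive approach using the bounds from Lemma~\ref{lem:g_signs} directly (namely $g_k(x)<c_k/x^{k+1}$ and $g_k(x+2^k)>c_k/(x+2^{k+1})^{k+1}$) produces only the weaker ratio $(1+2^{k+1}/x)^{k+1}$, whose best estimate $\leq e^{1/2} > 4/3$ falls just short of the target. Working pointwise inside the integral saves the factor of $2$ in the shift (replacing $2^{k+1}$ by $2^k$) and brings the exponent from $1/2$ down to the $1/4$ that fits inside $\log(4/3)$; this is precisely where the hypothesis $x \ge (k+1)2^{k+2}$ is tight.
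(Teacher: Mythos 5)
Your proof is correct and follows essentially the same route as the paper: the paper's auxiliary function $h_k(x,a):=4g_k(x+a)-3g_k(x)$, which inherits the box-integral representation of Lemma~\ref{lem:iteration_g_k}, amounts exactly to your pointwise comparison of the integrands $k!/(x+s)^{k+1}$ and $k!/(x+2^k+s)^{k+1}$, reduced at $s=0$ to $(1+2^k/x)^{k+1}<\tfrac43$. The only cosmetic difference is the closing arithmetic: you use $\log(1+y)\leq y$ together with $e^{1/4}<\tfrac43$, while the paper checks the equivalent bound $(4/3)^{1/(k+1)}-1>\tfrac{1}{4(k+1)}$ via a second-order binomial estimate.
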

\begin{proof}
For $a,k\geq 0$, let
\begin{equation*}
h_k(x,a):=4g_k(x+a)-3g_k(x).
\end{equation*}
The operator $\Delta_\alpha$ is linear, and thus for all $k\geq 1$ we have
\begin{equation*}
h_k(x,a)=h_{k-1}(x,a)-h_{k-1}(x+2^{k-1},a).
\end{equation*}
In particular, the analogues of the identities~\eqref{eq:A2n} and~\eqref{eq:A3n} also hold for $h_k$.
Moreover, for $m\geq 0$ and $a/x<\sqrt[m+1]{4/3}-1$ we have
\begin{equation*}
(-1)^mh^{(m)}_0(x,a)
= m!\bigg(\frac4{(x+a)^{m+1}}-\frac3{x^{m+1}}\bigg)
> 0.
\end{equation*}
Reasoning as in the proof of Lemma~\ref{lem:g_signs} we get that
\begin{equation*}
h_k(x,a)>0
\end{equation*}
for $a/x<\sqrt[k+1]{4/3}-1$. Now for $x\geq 0$ and $0\leq \rho\leq 1$ we have $(1+x)^{\rho}-1\geq \rho
x+\rho(\rho-1)\frac {x^2}2$ and so $\sqrt[k+1]{4/3}-1\geq \frac{5k+6}{18(k+1)^2}>\frac{1}{4(k+1)}$. The
lemma then follows by taking $a=2^k$.
\end{proof}
\begin{lem}\label{lem:A6n}
For $x\geq 2^{k+1}k$ and $0\leq h<k$ we have $g_k(x)<\frac{1}{2}g_h(x+2^k)$.
\end{lem}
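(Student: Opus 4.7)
The plan is first to reduce to the case $h = k-1$ using monotonicity of $g_h$ in $h$, and then to prove the remaining inequality by combining a sharpened upper bound on $g_k(x)$, obtained by integrating the derivative estimate of Lemma~\ref{lem:g_signs}, with the standard lower bound on $g_{k-1}(x+2^k)$ from the same lemma.

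First, iterating Corollary~\ref{cor:A2n} gives $g_h(y) \geq g_{k-1}(y)$ for all $0 \leq h \leq k-1$ and $y > 0$, so the desired inequality for every admissible $h$ follows from the single case $h = k-1$. Thus the task reduces to showing that
\begin{equation*}
g_k(x) < \tfrac{1}{2}\, g_{k-1}(x+2^k) \qquad \text{for } x \geq 2^{k+1}k.
\end{equation*}

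Next, I would bound $g_k(x)$ from above by writing $g_k(x) = g_{k-1}(x) - g_{k-1}(x+2^{k-1}) = \int_x^{x+2^{k-1}} \bigl(-g_{k-1}'(t)\bigr)\d t$ and applying Lemma~\ref{lem:g_signs} with $m = 1$, which gives $-g_{k-1}'(t) < k c_{k-1}/t^{k+1}$ (since $k!\cdot 2^{\binom{k-1}{2}} = k c_{k-1}$). Integrating yields
\begin{equation*}
g_k(x) < c_{k-1}\Bigl[\tfrac{1}{x^k} - \tfrac{1}{(x+2^{k-1})^k}\Bigr].
\end{equation*}
For the complementary lower bound I would apply Lemma~\ref{lem:g_signs} with $m = 0$ to obtain $g_{k-1}(x+2^k) > c_{k-1}/(x + 3\cdot 2^{k-1})^k$. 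Writing $u := 2^{k-1}/x$ (so that $0 < u \leq 1/(4k)$ under the hypothesis $x \geq 2^{k+1}k$) and cancelling $c_{k-1}$, the target inequality reduces to the purely elementary statement
\begin{equation*}
\tfrac{2}{(1+u)^k} + \tfrac{1}{(1+3u)^k} \geq 2 \qquad \text{for } 0 < u \leq \tfrac{1}{4k}.
\end{equation*}

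Finally, the left-hand side is decreasing in $u$, so it suffices to verify the inequality at the endpoint $u = 1/(4k)$, where the standard bound $(1 + t/k)^k \leq e^t$ yields $(1+1/(4k))^{-k} \geq e^{-1/4}$ and $(1+3/(4k))^{-k} \geq e^{-3/4}$, giving a left-hand side of at least $2e^{-1/4} + e^{-3/4} \approx 2.030 > 2$. The main obstacle I anticipate is the \emph{sharpness} of the upper bound on $g_k(x)$: using instead the cruder estimate $g_k(x) < c_k/x^{k+1}$ from Lemma~\ref{lem:g_signs} reduces the numerical inequality to $(1 + 3/(4k))^k \leq 2$, which already fails for $k = 5$, so extracting the finer bound by integrating the derivative estimate, rather than applying the $m=0$ estimate directly, is the essential step.
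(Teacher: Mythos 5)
Your proof is correct, and after the (identical) reduction to $h=k-1$ via Corollary~\ref{cor:A2n} it takes a genuinely different route for the key inequality. The paper gets $g_k(x)<\tfrac12 g_{k-1}(x+2^k)$ by invoking Lemma~\ref{lem:A5n} twice at level $k-1$: writing $g_k(x)=g_{k-1}(x)-g_{k-1}(x+2^{k-1})<\tfrac13 g_{k-1}(x+2^{k-1})<\tfrac49 g_{k-1}(x+2^k)$, so the whole numerical work is hidden in the auxiliary $4/3$-comparison lemma (itself proved by the $\Delta$-operator/integral-representation trick). You instead bypass Lemma~\ref{lem:A5n} entirely: you integrate the $m=1$ derivative bound of Lemma~\ref{lem:g_signs} over $[x,x+2^{k-1}]$ to get the sharpened estimate $g_k(x)\leq c_{k-1}\bigl(x^{-k}-(x+2^{k-1})^{-k}\bigr)$, pair it with the $m=0$ lower bound $g_{k-1}(x+2^k)>c_{k-1}(x+3\cdot 2^{k-1})^{-k}$, and reduce everything to the explicit inequality $2(1+u)^{-k}+(1+3u)^{-k}\geq 2$ for $0<u\leq 1/(4k)$, which you verify at the endpoint via $(1+t/k)^k\leq e^t$ and $2e^{-1/4}+e^{-3/4}>2$. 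All steps check out, including the range bookkeeping ($x\geq 2^{k+1}k$ gives exactly $u\leq 1/(4k)$) and your observation that the crude bound $g_k(x)<c_k x^{-k-1}$ would not suffice, which correctly identifies the telescoped upper bound as the essential sharpening; the only cosmetic point is that at $k=1$ your integrated upper bound is an equality rather than a strict inequality, but strictness of the final conclusion is rescued by the strict lower bound on $g_{k-1}(x+2^k)$, so there is no gap. Comparatively, the paper's argument is shorter given that Lemma~\ref{lem:A5n} is already in place and reuses it elsewhere, while yours is self-contained modulo Lemma~\ref{lem:g_signs} and makes the numerical margin ($\approx 2.03>2$) explicit.
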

\begin{proof}
By Corollary~\ref{cor:A2n}, it suffices to prove the claim when $h=k-1$.\\
Applying Lemma~\ref{lem:A5n} twice, for $x\geq 2^{k+1}k$ we have
\begin{align*}
g_k(x)-\tfrac{1}{2} g_{k-1}(x+2^k)
& = g_{k-1}(x)-g_{k-1}(x+2^{k-1})-\tfrac{1}{2} g_{k-1}(x+2^k)\\
&< \tfrac{1}{3} g_{k-1}(x+2^{k-1})-\tfrac{1}{2} g_{k-1}(x+2^k)
< -\tfrac{1}{18} g_{k-1}(x+2^k)
< 0,
\end{align*}
as desired.
\end{proof}
\begin{lem}\label{lem:A7n}
Let $r = \sum_{0\leq j\leq q}2^{h_j}>0$ with $h_0>h_1>\dots> h_q$. Then, if $r<2^k$ and $x\geq
2^{k+1}(k+1)$, we have
\begin{equation*}
(-1)^q\sum_{0\leq\ell< r}\frac{\eps_\ell}{x+\ell}
> g_k(x-2^k)
> g_k(x)
> 0.
\end{equation*}
\end{lem}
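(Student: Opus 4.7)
The plan is to split the lemma into two pieces. The rightmost inequalities $g_k(x-2^k)>g_k(x)>0$ are immediate from Lemma~\ref{lem:g_signs}: $g_k$ is positive and strictly decreasing, and $x-2^k>0$ follows from $x\geq 2^{k+1}(k+1)$. For the nontrivial estimate $(-1)^q\sum_{0\leq\ell<r}\eps_\ell/(x+\ell)>g_k(x-2^k)$ I would prove the following stronger intermediate bound, which does not involve $k$ at all:
\begin{equation*}
(-1)^q\sum_{0\leq\ell<r}\frac{\eps_\ell}{x+\ell}\geq g_{h_0}(x),\qquad\text{with equality iff }q=0,
\end{equation*}
valid for every $x\geq 2^{h_0+1}(h_0+1)$, and then deduce the lemma by applying Lemma~\ref{lem:A6n} with $h=h_0<k$ at the shifted point $x-2^k$, which gives $g_{h_0}(x)>2g_k(x-2^k)>g_k(x-2^k)$. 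The shift is admissible because $x\geq 2^{k+1}(k+1)$ implies $x-2^k\geq 2^{k+1}k$, exactly the hypothesis of Lemma~\ref{lem:A6n}; it also ensures $x\geq 2^{h_0+1}(h_0+1)$ since $h_0<k$.

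The intermediate bound is proved by induction on $q$. The base case $q=0$ is the very definition of $g_{h_0}$. For the inductive step, split the sum at $\ell=2^{h_0}$: since $r':=r-2^{h_0}<2^{h_0}$ (as $h_1<h_0$), the folding identity~\eqref{eq:folding2} gives $\eps_{m+2^{h_0}}=-\eps_m$ for $0\leq m<r'$, so
\begin{equation*}
\sum_{0\leq\ell<r}\frac{\eps_\ell}{x+\ell}=g_{h_0}(x)-\sum_{0\leq m<r'}\frac{\eps_m}{(x+2^{h_0})+m}.
\end{equation*}
Now $r'$ has $q$ nonzero binary digits with leading exponent $h_1<h_0$, and one checks that $x+2^{h_0}\geq 2^{h_1+1}(h_1+1)$, so the inductive hypothesis applied to $r'$ yields $(-1)^{q-1}\sum_{0\leq m<r'}\eps_m/(x+2^{h_0}+m)\geq g_{h_1}(x+2^{h_0})$. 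Multiplying the decomposition by $(-1)^q$ and using this bound gives
\begin{equation*}
(-1)^q\sum_{0\leq\ell<r}\frac{\eps_\ell}{x+\ell}\geq(-1)^q g_{h_0}(x)+g_{h_1}(x+2^{h_0}).
\end{equation*}
Lemma~\ref{lem:A6n}, applied with ``$k$''$=h_0$ and ``$h$''$=h_1$, yields $g_{h_1}(x+2^{h_0})>2g_{h_0}(x)$. In the worst case $q$ odd, the right-hand side is $-g_{h_0}(x)+2g_{h_0}(x)=g_{h_0}(x)$ (strict), which closes the induction.

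The main obstacle is choosing the right inductive statement. A direct induction on the lemma's conclusion $(-1)^q\sum>g_k(x-2^k)$ fails because removing the first block flips the parity of $(-1)^{q-1}$ and hence the sign of the available estimate on the residual sum. Replacing $g_k(x-2^k)$ by the sharper quantity $g_{h_0}(x)$ repairs the recursion: the factor of $2$ provided by Lemma~\ref{lem:A6n} is precisely what is needed to absorb the $-g_{h_0}(x)$ arising when $q$ is odd.
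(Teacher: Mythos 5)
Your proof is correct, and all the hypotheses you need are indeed available: $x\geq 2^{k+1}(k+1)$ does give $x-2^k\geq 2^{k+1}k$ for the final application of Lemma~\ref{lem:A6n}, the folding identity~\eqref{eq:folding2} applies since $r-2^{h_0}<2^{h_0}$, and the strictness coming from Lemma~\ref{lem:A6n} (or, for $q$ even, from positivity of $g_{h_1}$) closes the induction with the claimed ``equality iff $q=0$''.

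Your route differs from the paper's mainly in organization rather than in ingredients. The paper first invokes Lemma~\ref{lem:A1n} to write the sum in closed form as $\sum_{0\leq j\leq q}(-1)^j g_{h_j}(x+r_j)$, then proves the inequality directly by grouping consecutive terms in pairs, each pair being positive by Lemma~\ref{lem:A6n}, with the term $g_k(x-2^k)$ absorbed into the last group (the one containing $g_{h_0}$) at the cost of a coefficient $\tfrac32$; the cases $q$ odd and $q$ even are treated separately. You instead bypass Lemma~\ref{lem:A1n}, peel off one block at a time via folding, and run an induction whose strengthened, $k$-free invariant $(-1)^q\sum_{\ell<r}\eps_\ell/(x+\ell)\geq g_{h_0}(x)$ lets the factor $2$ from Lemma~\ref{lem:A6n} absorb the sign flip at each step; a single further application of Lemma~\ref{lem:A6n} at $x-2^k$ then yields the stated bound, in fact in the slightly sharper form $(-1)^q\sum_{\ell<r}\eps_\ell/(x+\ell)>2g_k(x-2^k)$. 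Both arguments hinge on exactly the same key input (the doubling inequality of Lemma~\ref{lem:A6n}) and the same folding structure; what your version buys is a cleaner inductive bookkeeping, an intermediate inequality independent of $k$ that is marginally stronger than the lemma, and an explicit explanation of why the naive induction on the lemma's own conclusion fails — while the paper's pairing avoids formulating any auxiliary statement by leaning on the already-proved expansion of Lemma~\ref{lem:A1n}.
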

\begin{proof}
First we observe that the inequality $g_k(x-2^k)>g_k(x)>0$ follows by Lemma~\ref{lem:g_signs}.\\
Write $r_j=\sum_{i<j}2^{h_i}$; by Lemma~\ref{lem:A1n} we have
\begin{equation*}
(-1)^q\sum_{0\leq \ell< r}\frac{\eps_\ell}{x+\ell} - g_k(x-2^k)
= \sum_{0\leq j\leq q}(-1)^{j+q}g_{h_j}(x+r_j) - g_k(x-2^k).
\end{equation*}
If $q$ is odd this is
\begin{multline*}
\big(g_{h_q}(x+r_q)-g_{h_{q-1}}(x+r_{q-1})\big)
+\cdots
+\big(g_{h_{3}}(x+r_{3})-g_{h_{2}}(x+r_{2})\big)\\
+\big(g_{h_{1}}(x+r_{1})-g_{h_0}(x+r_0)-g_k(x-2^k)\big).
\end{multline*}
For all $\ell\geq 1$ we have $r_\ell=r_{\ell-1}+2^{h_{\ell-1}}$ and thus, by Lemma~\ref{lem:A6n}, for all
$x\geq 2^{k+1}k$ we have
\begin{equation*}
 g_{h_\ell}(x+r_\ell)-g_{h_{\ell-1}}(x+r_{\ell-1})>\tfrac{1}{2}g_{h_\ell}(x+r_\ell)>0.
\end{equation*}
Moreover, since $r_0=0$ and $h_0<k$, by Lemma~\ref{lem:A6n} for $x>2^{k+1}(k+1)>2^{k+1}k+2^k$ we have
\begin{equation*}
 g_{h_{1}}(x+r_{1})-g_{h_0}(x+r_0)-g_k(x-2^k)> g_{h_{1}}(x+r_{1})-\tfrac32g_{h_0}(x+r_0)>0.
\end{equation*}
The case $q$ even is analogous and slightly simpler.
\end{proof}
\begin{cor}\label{cor:A3n}
Let $0<r<2^k$ and $x \geq 2^{k+1}(k+1)$. Then
\begin{equation*}
\sgn\bigg(\sum_{0\leq \ell< r}\frac{\eps_\ell}{x+\ell}\bigg) = -\eps_r.
\end{equation*}
\end{cor}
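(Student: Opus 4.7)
The plan is to deduce this corollary directly from Lemma~\ref{lem:A7n} together with the combinatorial definition of the Thue--Morse sequence. The only real content is matching the sign $(-1)^q$ produced by the lemma with $-\eps_r$.

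First I would write $r$ in binary as $r = \sum_{0 \leq j \leq q} 2^{h_j}$ with $h_0 > h_1 > \dots > h_q$, so that the binary expansion of $r$ has exactly $q+1$ ones. Since $0 < r < 2^k$ and $x \geq 2^{k+1}(k+1)$, Lemma~\ref{lem:A7n} applies and yields
\[
(-1)^q\sum_{0\leq\ell< r}\frac{\eps_\ell}{x+\ell} > g_k(x-2^k) > 0.
\]
In particular the sum is nonzero and its sign equals $(-1)^q$.

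Next I would invoke the definition of the Thue--Morse sequence: $t_r$ counts the number of $1$s in the binary expansion of $r$ modulo $2$, and $\eps_r = (-1)^{t_r}$. Since $r$ has $q+1$ binary digits equal to $1$, this gives $\eps_r = (-1)^{q+1} = -(-1)^q$, so $-\eps_r = (-1)^q$. Combining this with the sign just computed yields
\[
\sgn\bigg(\sum_{0\leq \ell< r}\frac{\eps_\ell}{x+\ell}\bigg) = (-1)^q = -\eps_r,
\]
as desired.

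There is no real obstacle: the whole work is packed into Lemma~\ref{lem:A7n}, and the corollary is just a bookkeeping translation of ``the parity of the number of binary digits of $r$'' into the value of $\eps_r$. The one thing to be careful about is the off-by-one in this parity count (the indexing $j=0,\dots,q$ gives $q+1$ summands, not $q$), which is what produces the sign flip between $(-1)^q$ and $\eps_r$.
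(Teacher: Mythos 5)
Your proposal is correct and matches the paper's proof: both deduce the sign $(-1)^q$ from Lemma~\ref{lem:A7n} and identify it with $-\eps_r$ via the fact that $r$ has exactly $q+1$ ones in its binary expansion. Nothing further is needed.
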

\begin{proof}
Lemma~\ref{lem:A7n} shows that the sign is $(-1)^q$, which is $-\eps_r$ because $q+1$ is the number of
times the digit $1$ appears in the dyadic expansion of $r$.
\end{proof}

\subsection{Proof of Theorem~\ref{thm:blocks}}
The following lemma provides the crucial step in the proof of Theorem~\ref{thm:blocks}. It shows, for
large enough $n$, that if the distance of $\sigma_{n-1}$ to $\tau$ is less than $g_k(n)$  then either
this distance is also less than $g_{k+1}(n)$ or $\sigma_{n-1+2^k}$ has distance from $\tau$ which is less
than $g_k(n+2^k)$.
\begin{lem}\label{lem:A8n}
Let $k\geq 0$ and assume that $n\geq 2^{k+1}(k+1)$ is such that
\begin{equation}\label{eq:A8n}
\sigma_{n-1}\leq \tau <\sigma_{n-1}+g_k(n).
\end{equation}
Then for $0\leq r< 2^{k+1}$ one has $s_{n+r}=\eps_r$ and one of the following inequalities holds:
\begin{align}
&\sigma_{n-1}\leq \tau <\sigma_{n-1}+g_{k+1}(n),          \label{eq:A9n}\\
&\sigma_{n-1+2^k}-g_k(n+2^k)\leq \tau <\sigma_{n-1+2^k}.  \label{eq:A10n}
\end{align}
Similarly, if $n\geq 2^{k+1}(k+1)$ is such that
\begin{equation}\label{eq:A11n}
\sigma_{n-1}-g_k(n)\leq \tau <\sigma_{n-1},
\end{equation}
then for $0\leq r< 2^{k+1}$ one has $s_{n+r} = -\eps_r$ and one of the following inequalities holds:
\begin{align*}
&\sigma_{n-1}-g_{k+1}(n)\leq \tau <\sigma_{n-1},        \\
&\sigma_{n-1+2^k}\leq \tau <\sigma_{n-1+2^k}+g_k(n+2^k).
\end{align*}
\end{lem}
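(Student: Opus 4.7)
The plan is to establish the two conclusions in stages: first, under hypothesis~\eqref{eq:A8n}, I will prove by induction on $r\in\{0,1,\dots,2^{k+1}-1\}$ that $s_{n+r}=\eps_r$; then I will deduce the dichotomy between~\eqref{eq:A9n} and~\eqref{eq:A10n} from a single algebraic identity relating $g_{k+1}$ to $g_k$. The second half of the lemma, under~\eqref{eq:A11n}, will follow by the symmetric argument with all signs reversed (equivalently, by running the greedy algorithm for $2\sigma_{n-1}-\tau$ in place of $\tau$), so no extra work will be required.

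For the induction, the base case $r=0$ is immediate from $\sigma_{n-1}\le\tau$, which forces $s_n=+1=\eps_0$. Granting $s_{n+j}=\eps_j$ for all $0\le j<r$, one has
\begin{equation*}
\sigma_{n+r-1}-\sigma_{n-1}=\sum_{j=0}^{r-1}\frac{\eps_j}{n+j},
\end{equation*}
and the greedy rule~\eqref{eq:def_sn} determines $s_{n+r}$ by comparing $\tau-\sigma_{n-1}\in[0,g_k(n))$ with this partial sum. The argument splits according to the range of $r$. For $0<r<2^k$, Lemma~\ref{lem:A7n} applied with $x=n$ (which meets the assumption $n\ge 2^{k+1}(k+1)$) gives both the sign $-\eps_r$ and a magnitude bound $>g_k(n)$ on the partial sum, and the comparison with $\tau-\sigma_{n-1}$ lands on the correct side to yield $s_{n+r}=\eps_r$. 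For $r=2^k$ the partial sum equals $g_k(n)$, which strictly exceeds $\tau-\sigma_{n-1}$, so $s_{n+2^k}=-1=\eps_{2^k}$. For $2^k<r<2^{k+1}$, write $r=2^k+r'$ with $0<r'<2^k$ and use the folding identity $\eps_{\ell+2^k}=-\eps_\ell$ from~\eqref{eq:folding2} to rewrite the partial sum as
\begin{equation*}
g_k(n)-\sum_{\ell=0}^{r'-1}\frac{\eps_\ell}{n+2^k+\ell};
\end{equation*}
Lemma~\ref{lem:A7n} applied now with $x=n+2^k$ supplies both the sign $-\eps_{r'}$ and a magnitude bound $>g_k(n)$ on this inner sum, which forces the comparison onto the correct side and yields $s_{n+r}=-\eps_{r'}=\eps_r$.

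With every sign up to index $n+2^{k+1}-1$ identified, $\sigma_{n-1+2^k}=\sigma_{n-1}+g_k(n)$. The recursion $g_{k+1}(n)=g_k(n)-g_k(n+2^k)$, which is the dyadic splitting used in the proof of Lemma~\ref{lem:iteration_g_k}, rewrites as
\begin{equation*}
\sigma_{n-1}+g_{k+1}(n)=\sigma_{n-1+2^k}-g_k(n+2^k),
\end{equation*}
so that~\eqref{eq:A9n} and~\eqref{eq:A10n} become precisely the two half-intervals into which the hypothesis $\sigma_{n-1}\le\tau<\sigma_{n-1+2^k}$ splits $\tau$, according to whether $\tau$ lies strictly below this common threshold or at least equal to it.

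The main obstacle is the subcase $2^k<r<2^{k+1}$: there one genuinely needs the full quantitative content of Lemma~\ref{lem:A7n} after the shift $x\mapsto n+2^k$, rather than only the sign information provided by Corollary~\ref{cor:A3n}, and the hypothesis $n\ge 2^{k+1}(k+1)$ is exactly what ensures that the lemma's prerequisites survive the shift. Everything else reduces to the bookkeeping with the identities for $g_k$ described above.
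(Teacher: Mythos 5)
Your proposal is correct and follows essentially the same route as the paper: both rest on Lemma~\ref{lem:A7n} together with Corollary~\ref{cor:A3n} to pin down the signs across the block (your folding of the range $2^k<r<2^{k+1}$ into an inner sum evaluated at $x=n+2^k$, where the sharper bound $g_k(x-2^k)=g_k(n)$ is needed, is exactly the paper's second application of the lemma under the intermediate hypothesis $\sigma_{n-1+2^k}-g_k(n)\leq\tau<\sigma_{n-1+2^k}$), and your dichotomy via $g_{k+1}(n)=g_k(n)-g_k(n+2^k)$ is the paper's case split on whether $\tau<\sigma_{n-1+2^{k+1}}$. One small remark: handle the case~\eqref{eq:A11n} by the sign-reversed argument, as you primarily propose, rather than by the parenthetical reduction to the greedy algorithm for $2\sigma_{n-1}-\tau$, since that algorithm's history up to index $n-1$ need not reproduce $\sigma_{n-1}$ and the tie-breaking in~\eqref{eq:def_sn} is not symmetric.
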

\begin{proof}
We shall consider only the case where~\eqref{eq:A8n} holds, the other case being analogous.\\
Assuming~\eqref{eq:A8n}, by Lemma~\ref{lem:A7n} and Corollary~\ref{cor:A3n} we have
\begin{equation}\label{eq:A14n}
\sgn\bigg(\sigma_{n-1}-\tau+\sum_{0\leq \ell< r}\frac{\eps_\ell}{n+\ell}\bigg)
= \sgn\bigg(\sum_{0\leq \ell< r}\frac{\eps_\ell}{n+\ell}\bigg)
= -\eps_r
\end{equation}
for all $r \in (0,2^k)$. Moreover, if $\sigma_{n-1}\neq\tau$, then the claim holds also for $r=0$, since
$\sgn(\sigma_{n-1}-\tau)=-1=-\eps_0$. The equalities in~\eqref{eq:A14n} and the definition of the greedy
algorithm then imply that, in any case,
\begin{equation}\label{eq:A15n}
\sigma_{n-1+r} = \sigma_{n-1}+\sum_{0\leq \ell< r}\frac{\eps_\ell}{n+\ell}
\quad
\text{for all $r \in [0,2^k]$}.
\end{equation}
This relation with $r=2^k$ and the equality $\sum_{0\leq\ell< 2^k}\frac{\eps_\ell}{n+\ell}=g_k(n)$ in
Lemma~\ref{lem:A1n} show that
\begin{equation*}
\sigma_{n-1+2^k} = \sigma_{n-1}+\sum_{0\leq \ell< 2^k}\frac{\eps_\ell}{n+\ell} = \sigma_{n-1}+g_k(n),
\end{equation*}
which is part of what we have to prove in this case. Further, this equality and~\eqref{eq:A8n} imply that
\begin{equation}\label{eq:A16n}
\sigma_{n-1+2^k}-g_k(n) \leq \tau <\sigma_{n-1+2^k}.
\end{equation}
In particular, this implies that
\begin{equation}\label{eq:A17n}
\sigma_{n+2^k}
= \sigma_{n-1+2^k}-\frac{1}{n+2^k}
= \sigma_{n-1+2^k}-\frac{\eps_0}{n+2^k}.
\end{equation}
By~\eqref{eq:A16n} and appealing once again to Lemma~\ref{lem:A7n} and Corollary~\ref{cor:A3n}, we have
\begin{equation*}
\sgn\bigg(\sigma_{n-1+2^k}-\tau-\sum_{0\leq \ell< r}\frac{\eps_\ell}{n+2^k+\ell}\bigg)
= -\sgn\bigg(\sum_{0\leq \ell< r}\frac{\eps_\ell}{n+2^k+\ell}\bigg)
= \eps_r
\end{equation*}
for all $r \in (0,2^k)$. Thus, since we have also~\eqref{eq:A17n} we must have
\begin{equation}\label{eq:A18n}
\sigma_{n-1+2^k+r} = \sigma_{n-1+2^k} - \sum_{0\leq \ell< r}\frac{\eps_\ell}{n+2^k+\ell}
\quad
\text{for all $r \in [0,2^k]$}.
\end{equation}
By~\eqref{eq:A15n}, \eqref{eq:A17n} and~\eqref{eq:A18n} we have
\begin{equation*}
\sigma_{n-1+r} = \sigma_{n-1} + \sum_{0\leq \ell< r}\frac{\eps_\ell}{n+\ell}
\quad
\text{for all $r \in [0, 2^{k+1}]$}.
\end{equation*}
This proves that $s_{n+r}=\eps_r$ for $r=0,\ldots,2^{k+1}-1$. Moreover, the case $r=2^{k+1}$ yields
\begin{equation}\label{eq:A20n}
\sigma_{n-1+2^{k+1}}
= \sigma_{n-1}+\sum_{0\leq \ell< 2^{k+1}}\frac{\eps_\ell}{n+\ell}
= \sigma_{n-1}+g_{k+1}(n).
\end{equation}
Then we have two possibilities: either $\sigma_{n-1+2^{k+1}}>\tau$ or $\sigma_{n-1+2^{k+1}}\leq \tau$.
In the first case, by~\eqref{eq:A8n} we have~\eqref{eq:A9n}. In the second case, we observe that since
$\tau <\sigma_{n-1+2^k}$ by~\eqref{eq:A16n}, then comparing~\eqref{eq:A15n} with $r=2^k$
and~\eqref{eq:A20n}, we get
\begin{equation*}
\sigma_{n-1+2^k}-g_k(n+2^k)=\sigma_{n-1+2^{k+1}}\leq \tau <\sigma_{n-1+2^k},
\end{equation*}
so~\eqref{eq:A10n} holds.
\end{proof}
\begin{cor}\label{cor:A4n}
Let $k\geq 0$ and let $n\geq 2^{k+1}(k+1)$.
Suppose that we have $(s_m)_{n\leq m<n+2^k}=B_k$ or $(s_n)_{n\leq m<n+2^k}=-B_k$ with $B_k$ as in
Theorem~\ref{thm:blocks}. Then there exists a sequence $(\delta_i)_{i\geq 0}$ with $\delta_i\in\{\pm 1\}$
for all $i$ such that
\begin{equation*}
(s_m)_{m\geq n}=(\delta_0\cdot B_k,\delta_1\cdot B_k,\delta_2\cdot B_k,\dots).
\end{equation*}
\end{cor}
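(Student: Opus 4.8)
The plan is to iterate Lemma~\ref{lem:A8n}, feeding it an inequality that the hypothesis forces, and then to read off the block structure from the folding property~\eqref{eq:folding}.

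First I reduce to the case $(s_m)_{n\le m<n+2^k}=B_k$ — the case $-B_k$ is symmetric, handled by the ``minus'' halves of Lemma~\ref{lem:A8n} — and dispose of $k=0$ at once, since then $B_0=(\eps_0)=(1)$ and every $\{\pm1\}$-valued sequence is trivially a concatenation of blocks $\pm B_0$. So let $k\ge1$. Since $s_n=\eps_0=+1$ and $s_{n+2^{k-1}}=\eps_{2^{k-1}}=-1$, the equivalences~\eqref{eq:signs_to_ineq} give $\sigma_{n-1}\le\tau<\sigma_{n-1+2^{k-1}}$; and because $s_{n+\ell}=\eps_\ell$ for $\ell<2^{k-1}$ we have $\sigma_{n-1+2^{k-1}}=\sigma_{n-1}+g_{k-1}(n)$. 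Hence $\sigma_{n-1}\le\tau<\sigma_{n-1}+g_{k-1}(n)$, which is exactly hypothesis~\eqref{eq:A8n} of Lemma~\ref{lem:A8n} at level $k-1$ and position $n$ (and $n\ge2^{k+1}(k+1)\ge2^k k$, so the lemma applies).

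I then run an iteration through ``states'' of the form (level $\ell$, position $N$, one of~\eqref{eq:A8n},~\eqref{eq:A11n}), beginning from (level $k-1$, position $n$, \eqref{eq:A8n}), and using Lemma~\ref{lem:A8n} \emph{only at levels $k-1$ and $k$}. From a level-$(k-1)$ state at a position $N\ge n$ (legitimate, as $N\ge n\ge 2^k k$), Lemma~\ref{lem:A8n} pins down $s_N,\dots,s_{N+2^k-1}$ as a block $\pm B_k$ and moves to either a level-$k$ state at $N$ or a level-$(k-1)$ state at $N+2^{k-1}$; in the latter case we recurse. From a level-$k$ state at a position $N\ge n\ge2^{k+1}(k+1)$, Lemma~\ref{lem:A8n} pins down $s_N,\dots,s_{N+2^{k+1}-1}$ as a block $\pm B_{k+1}$ and moves to either a level-$k$ state at $N+2^k$ (recurse) or a level-$(k+1)$ state at $N$; the last alternative is the only delicate point. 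There \eqref{eq:A9n} gives $\sigma_{N-1}\le\tau<\sigma_{N-1}+g_{k+1}(N)$, while the block $\pm B_{k+1}$ just obtained gives $\sigma_{N-1+2^{k+1}}=\sigma_{N-1}+g_{k+1}(N)$; hence $\sigma_{N+2^{k+1}-1}-g_{k+1}(N)\le\tau<\sigma_{N+2^{k+1}-1}$. The elementary inequality $g_{k+1}(N)\le g_{k-1}(N+2^{k+1})$ for $N\ge2^{k+1}(k+1)$ — which follows from the bounds $g_{k+1}(x)<c_{k+1}x^{-k-2}$ and $g_{k-1}(x)>c_{k-1}(x+2^{k-1})^{-k}$ of Lemma~\ref{lem:g_signs} together with $8(k+1)>e^{5/4}k$ — then shows this is a level-$(k-1)$ state (of type~\eqref{eq:A11n}) at position $N+2^{k+1}$, and we recurse there. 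In every round the position advances by at least $2^{k-1}$ after boundedly many steps, so it tends to infinity and the iteration determines $(s_m)_{m\ge n}$ completely; the argument is insensitive to whether $\tau$ coincides with a partial sum $\sigma_Q$ (which by Proposition~\ref{prop:equality} happens at most once anyway), because all the inequalities in Lemma~\ref{lem:A8n} are non-strict.

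Finally I check that the produced blocks assemble into the asserted form. By~\eqref{eq:folding} one has $B_{k+1}=(B_k,-B_k)$, so every block the iteration outputs is a concatenation of blocks $\pm B_k$, and by~\eqref{eq:folding2} consecutive outputs overlap consistently — the sign of a $\pm B_k$ that reappears in an overlap flips exactly when Lemma~\ref{lem:A8n} switches between~\eqref{eq:A8n} and~\eqref{eq:A11n}. Reading $(s_m)_{m\ge n}$ in consecutive windows of length $2^k$ starting at $m=n$ therefore yields a sequence of blocks $\pm B_k$, which is the claim. I expect the bookkeeping of this last step — tracking how the overlapping output blocks, which sit at positions $\equiv n\pmod{2^{k-1}}$ and have lengths $2^k$ or $2^{k+1}$, refine to the $2^k$-aligned blocks — together with the little estimate $g_{k+1}(N)\le g_{k-1}(N+2^{k+1})$, to be the only spots that need genuine care.
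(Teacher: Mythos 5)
Your iteration through states of Lemma~\ref{lem:A8n} is set up correctly (and the conversion of a level-$(k+1)$ escape into a level-$(k-1)$ state at $N+2^{k+1}$ via $g_{k+1}(N)\le g_{k-1}(N+2^{k+1})$ is a legitimate and correctly justified move), but the final reading-off step --- the one you yourself flag as needing ``genuine care'' --- is a genuine gap, and it cannot be closed in the form you state it. The blocks your iteration outputs are anchored only on the grid $n+2^{k-1}\Z_{\geq 0}$, not on $n+2^{k}\Z_{\geq 0}$: every passage from a level-$(k-1)$ state at $N$ to a level-$(k-1)$ state at $N+2^{k-1}$ shifts the anchor by half a block, and if an \emph{odd} number of such passages occurs before the level first rises to $k$, the $\pm B_{k+1}$ block that then gets pinned down starts at a position $\equiv n+2^{k-1}\pmod{2^k}$. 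From that moment the windows of length $2^k$ aligned with $n$ cut that block through its middle and read $(B_{k-1},B_{k-1})$ or $(-B_{k-1},-B_{k-1})$, which is neither $B_k$ nor $-B_k$; ``consecutive outputs overlap consistently'' is true but does not produce the $2^k$-aligned decomposition. This is not a presentational issue: take $k=1$, $n=10$ and $\tau=1+\tfrac12+\cdots+\tfrac19+\tfrac{9}{100}$. Then $n\ge 2^{k+1}(k+1)=8$ and $(s_{10},s_{11})=(+1,-1)=B_1$, but a direct computation gives $(s_{12},\dots,s_{16})=(+1,-1,+1,+1,-1)$, so the aligned window $(s_{14},s_{15})=(+1,+1)$ is not $\pm B_1$. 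In your scheme this is exactly the bad scenario: the iteration passes through level-$0$ states at $10,11,12,13$ (three half-block moves) and then a level-$1$ state at the odd position $13$, where the block $-B_2$ locks in off the even grid.

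For comparison, the paper argues differently in form but hits the same delicate branch: it reduces the corollary to showing that the single next aligned window $[n+2^k,n+2^{k+1})$ is $\pm B_k$ and then iterates, and in its ``second case'' (your level-$(k-1)$ state at $n+2^{k-1}$) it invokes Lemma~\ref{lem:A8n} once more, which pins down only the half window up to $n+3\cdot 2^{k-1}$ --- so the half-block shift is not ruled out there either, and the example above shows it can genuinely occur. What your iteration (and the paper's) actually yields is the weaker, unaligned statement that $(s_m)_{m\ge n}$ decomposes into blocks $\kappa\cdot B_h$ with $h\ge k$ non-decreasing, anchored on the grid $n+2^{k-1}\Z_{\geq 0}$ and re-anchored at the position where the level increases; if you want a usable corollary (in particular one strong enough for the proof of Theorem~\ref{thm:blocks}), that is the statement you should formulate and prove, rather than the $2^k$-aligned one, and your state machine is already most of the way to doing so once you drop the last paragraph's alignment claim.
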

\begin{proof}
Suppose  $(s_m)_{n\leq m<n+2^k}=B_k$, the other case is proved in a similar way. Notice that it suffices
to show that $(s_m)_{n+2^k\leq m<n+2^{k+1}}=\pm B_k$, since one can then iterate the same argument.\\
Since $s_n=\eps_0=1$, we have $\sigma_{n-1}\leq \tau$. Moreover, since $s_{n+{2^{k-1}}} = \eps_{2^{k-1}}
=-1$, by~\eqref{eq:signs_to_ineq_2} and the definition of $g_{k-1}$ (which is positive by
Lemma~\ref{lem:g_signs}), we deduce that
\begin{equation*}
\sigma_{n-1}
\leq \tau
< \sigma_{n-1} +g_{k-1}(n)
= \sigma_{n+2^{k-1}-1},
\end{equation*}
where the second inequality is strict by~\eqref{eq:signs_to_ineq}.
Thus the hypothesis~\eqref{eq:A8n} in Lemma~\ref{lem:A8n} is satisfied with $k-1$ in place of $k$, whence
either~\eqref{eq:A9n} or~\eqref{eq:A10n} holds with $k$ in place of $k+1$. In the first case, we have
that~\eqref{eq:A8n} is satisfied; hence, applying Lemma~\ref{lem:A8n} once again, we obtain
$s_{n+r}=\eps_r$ for $0\leq r< 2^{k+1}$, i.e., $(s_m)_{n\leq m<n+2^{k+1}}=B_{k+1}=(B_k,-B_k)$, and in
particular $(s_m)_{n + 2^k \leq m < n + 2^{k+1}} = -B_k$. In the second case, we notice
that~\eqref{eq:A10n}, with $k-1$ in place of $k$, is actually hypothesis~\eqref{eq:A11n} with $k-1$ in
place of $k$ and $n+2^k$ in place of $n$. Thus, applying Lemma~\ref{lem:A8n} we obtain $(s_m)_{n+2^k\leq
m<n+2^{k+1}}=B_k$, as desired.
\end{proof}
We are now in a position to prove Theorem~\ref{thm:blocks}. First, we define the exceptional sets $X_k$.
Also we recall that $Y_k:=\cup_{h\leq k}X_h$ with $Y_0:=\emptyset$.
\begin{defin}
For all $k\in\N$ the set $X_k$ is defined as
\begin{equation}\label{eq:def_x_k}
X_k := \Big\{\tau \in\R \colon \textup{$\exists N\geq 0$ and $0\leq m<2^k$ s.~t. }
             s_n = f_k(n+m)\ \forall n\geq N\Big\},
\end{equation}
where we recall that $f_{k}$ denotes the $2^k$-periodic function such that $f_{k}(n)=\eps_{n}$ for all $n \in [0,2^k)$.
\end{defin}
\begin{proof}[Proof of Theorem~\ref{thm:blocks}]
First, we observe that the result is tautological for $\tau\in X_k$. Indeed, by the definition of $X_k$
there exists $N$ such that
\begin{equation*}
(s_n)_{n\geq N}=(s_1,s_2,\dots,s_{N-1}, B_k, B_k,B_k,\dots)
=(s_1\cdot B_0,s_2\cdot B_0,\dots,s_{N-1}\cdot B_0, B_k, B_k,B_k,\dots).
\end{equation*}
Thus, let us assume that $\tau\notin Y_\infty$.

By Proposition~\ref{prop:A1n} $(s_n)_n$ has infinitely many changes of signs. Equivalently, it contains
infinitely many blocks $B_1$. In particular, we can find $N_1$  such that $N_1\geq 2^{1+1}\cdot (1+1)=8$
and $(s_{m})_{N_1\leq m<N_1+2}=B_1$. Thus, by Corollary~\ref{cor:A4n} there exists a sequence
$(\delta_{i,1})_{i\geq 0}$ with values in $\{\pm 1\}$ such that
\begin{equation}\label{eq:iterative_1}
(s_n)_{n\geq N_1}=(\delta_{0,1}\cdot B_1,\delta_{1,1}\cdot B_1,\delta_{2,1}\cdot B_1,\dots).
\end{equation}
The sequence $(\delta_{i,1})_{i\geq 0}$ contains infinitely many changes of signs. Indeed, it cannot be
equal to $1$ for all $i$ large enough, since otherwise we would have $\tau\in X_1$, and for the same
reason it cannot be equal to $-1$ for all $n$ large, since
\begin{equation*}
(-B_1,-B_1,-B_1,\dots)=(-B_0,B_1,B_1,B_1,\dots).
\end{equation*}
Thus, we can find $i$ such that $\delta_i=-\delta_{i+1}=1$ and such that the corresponding index $n$ on
the left hand side of~\eqref{eq:iterative_1} is $N_2\geq 2^{2+1}\cdot (2+1)=24$ (and $N_2> N_1$). We then
have $(s_{m})_{N_2\leq m<N_2+2^2}=(B_1,-B_1)=B_2$ and so by Corollary~\ref{cor:A4n} there exists a
sequence $(\delta_{i,2})_{i\geq 0}$ with values in $\{\pm 1\}$ such that
\begin{equation*}
(s_n)_{n\geq N_2}=(\delta_{0,2}\cdot B_2,\delta_{1,2}\cdot B_2,\delta_{2,2}\cdot B_2,\dots).
\end{equation*}
As before, one sees that $(\delta_{i,2})_{i\geq 0}$ contains infinitely many changes of signs. We can then
keep iterating this process, whence obtaining sequences $(k_i)_i$ and $(\kappa_i)_i$ with the desired
properties.
\end{proof}

\section{The exceptional sets $X_k$}\label{sec:exceptional}
In this section we study the exceptional sets $X_k$, defined in~\eqref{eq:def_x_k}.

As observed in the introduction, the elements in $X_k$ can be written as a rational number plus $U_{k,m}$
for some $m \in [0,2^k)$, where the constant $U_{k,m}$ is as defined in~\eqref{eq:def_U}. We also recall
that $U_{k,m}=-U_{k,m+2^{k-1}}$ for all $m \in [0,2^{k-1})$. The values of $U_{k,m}$ for $k=1,2$ have been
given in~\eqref{eq:u01}. For $k=3$ we have
\begin{align*}
&  U_{3,0}=\frac{4\sqrt{2}\log(2 - \sqrt{2})-\pi - (\sqrt{2}+1)\log 4}{8},
&& U_{3,1}=\frac{(1-2\sqrt{2})\pi+\log 4}{8},                             \\
&  U_{3,3}=\frac{4\sqrt{2}\log(2 - \sqrt{2})+\pi - (\sqrt{2}+1)\log 4}{8},
&& U_{3,2}=\frac{(2\sqrt{2}-1)\pi+\log 4}{8}.
\end{align*}
We now show that $U_{k,m}$ can be written as a linear combination of logarithms. Baker's theorem will
then yield the transcendence of $U_{k,m}$.
\begin{prop}\label{prop:A2n}
For every $k\geq 1$, $m\in\Z$, we have
\begin{equation*}
U_{k,m}
= \sum_{\substack{a=1\\ a\odd}}^{2^k-1}c(a)e^{2\pi ia m/2^k} \log\big(1-e^{2\pi ia/2^k}\big),
\qquad
c(a):= i^k e^{\pi i a/2^k} \prod_{j=1}^k \sin\Big(\frac{\pi a}{2^j}\Big).
\end{equation*}
\end{prop}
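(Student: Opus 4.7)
The plan is to use the discrete Fourier expansion of the $2^k$-periodic sign function $f_k$, and then evaluate the resulting logarithmic series explicitly via the product formula for the Thue--Morse generating polynomial.

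First I would write $f_k(n) = \sum_{a=0}^{2^k-1} \hat f_k(a) e^{2\pi i a n/2^k}$ with $\hat f_k(a) = 2^{-k}\sum_{n=0}^{2^k-1}\eps_n e^{-2\pi ian/2^k}$. Since $\sum_{n=0}^{2^k-1}\eps_n = 0$ for $k\geq 1$ (a consequence of the folding property~\eqref{eq:folding2}), we have $\hat f_k(0)=0$, so that $U_{k,m}$ converges by Dirichlet's test. Splitting the finite $a$-sum out of the (partial sums of the) series for $U_{k,m}$ is legitimate, and for each $a\neq 0$ the inner series $\sum_{n\geq1} e^{2\pi ian/2^k}/n$ evaluates to $-\log(1-e^{2\pi ia/2^k})$. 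This yields
\begin{equation*}
U_{k,m} = -\sum_{a=1}^{2^k-1}\hat f_k(a)\,e^{2\pi iam/2^k}\log\!\bigl(1-e^{2\pi ia/2^k}\bigr).
\end{equation*}

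Next I would compute $\hat f_k(a)$ via the product identity $\sum_{n=0}^{2^k-1}\eps_n z^n = \prod_{j=0}^{k-1}(1-z^{2^j})$, proved by induction on $k$ using $\eps_{2n}=\eps_n$, $\eps_{2n+1}=-\eps_n$ to extract a factor $(1-z)$ at each step. Substituting $z=e^{-2\pi ia/2^k}$ and reindexing gives $2^k\hat f_k(a)=\prod_{j=1}^k(1-e^{-2\pi ia/2^j})$. The identity $1-e^{-i\phi}=2i\sin(\phi/2)\,e^{-i\phi/2}$ applied to each factor, together with $\sum_{j=1}^k 2^{-j}=1-2^{-k}$ and $e^{-i\pi a}=(-1)^a$, collapses the product into
\begin{equation*}
\hat f_k(a) = \frac{(2i)^k}{2^k}(-1)^a e^{\pi ia/2^k}\prod_{j=1}^k\sin\!\Bigl(\tfrac{\pi a}{2^j}\Bigr) = (-1)^a c(a).
\end{equation*}

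Finally I would argue that only odd $a$ survive. For even $a=2^\nu b$ with $b$ odd and $1\leq \nu\leq k-1$, the factor $j=\nu$ in $c(a)$ is $\sin(\pi b)=0$, so $c(a)=0$. For odd $a$ we have $(-1)^a=-1$, which cancels the leading minus sign in the Fourier inversion, producing the claimed formula. The main obstacle is the careful bookkeeping in Step three — tracking the phases $e^{\pm \pi ia/2^j}$ and the factor of $i^k$ to recover exactly the stated $c(a)$ — but no deep ingredient beyond the Fourier inversion and the elementary product identity is needed.
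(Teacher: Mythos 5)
Your argument is correct and is essentially the paper's proof: expand $f_k$ in additive characters, identify the Fourier coefficients via the factorization $\sum_{\ell=0}^{2^k-1}\eps_\ell x^\ell=\prod_{j=0}^{k-1}(1-x^{2^j})$, evaluate $\sum_{n\geq1}e^{2\pi ian/2^k}/n=-\log(1-e^{2\pi ia/2^k})$, and track the phases to get $c(a)$, with the even $a$ dropping out because a sine factor vanishes. The only difference is cosmetic (you absorb the sign into $(-1)^a$ rather than restricting to odd $a$ from the start), and your bookkeeping checks out.
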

\begin{proof}
We write $f_k(n)$ in terms of additive characters:
\begin{equation}\label{eq:fourier}
f_k(n)
= \frac{1}{2^k} \sum_{a=0}^{2^k-1}\Bigg(\sum_{\ell=0}^{2^k-1}\eps_\ell\, e^{-2\pi i \ell a/2^k} \bigg)e^{2\pi i an/2^k}.
\end{equation}
The sum in brackets is equal to $P_k(e^{-2\pi i a/2^k})$, where $P_k(x):=\sum_{\ell=0}^{2^k-1}\eps_\ell
x^\ell$. The uniqueness of the base-$2$ expansion gives the factorization $P_k(x) =
\prod_{j=0}^{k-1}(1-x^{2^j})$, so that
\begin{align*}
\frac{1}{2^k}\sum_{\ell=0}^{2^k-1}\eps_\ell\, e^{-2\pi i \ell a/2^k}
&= \frac{1}{2^k}\prod_{j=0}^{k-1}\big(1-e^{-2\pi i a/2^{k-j}} \big)
 = i^k \bigg(\prod_{j=0}^{k-1}e^{-\pi i a2^j/2^{k}} \bigg)\prod_{j=1}^k \sin\Big(\frac{\pi a}{2^j}\Big)\\
&= i^k e^{-\pi i a(2^{k}-1)/2^k} \prod_{j=1}^k \sin\Big(\frac{\pi a}{2^j}\Big)=-c(a),
\end{align*}
where in the last step we used that the product on the second line is $0$ unless $a$ is odd.

Inserting~\eqref{eq:fourier} in the definition~\eqref{eq:def_U} of $U_{k,m}$ and  exchanging the order
of summation of the two sums, a step which can be easily justified, we obtain
\begin{equation*}
U_{k,m}=-\sum_{a=1\atop a\odd}^{2^k-1}c(a)e^{2\pi i am/2^k} \sum_{n=1}^{+\infty}\frac{e^{2\pi i an/2^k}}{n}
= \sum_{a=1\atop a\odd}^{2^k-1}c(a)e^{2\pi i am/2^k} \log(1-e^{2\pi i a/2^k}),
\end{equation*}
as claimed.
\end{proof}
\begin{prop}\label{prop:A3n}
The number $U_{k,m}$ is transcendental, for every $k\geq 1$ and every $m \in [0,2^k)$.
\end{prop}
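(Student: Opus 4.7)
The plan is to apply Baker's theorem on linear forms in logarithms to the cyclotomic expansion of $U_{k,m}$ supplied by Proposition~\ref{prop:A2n}.

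By Proposition~\ref{prop:A2n},
\[
U_{k,m} = \sum_{\substack{a=1\\ a\odd}}^{2^k-1} \beta_a \log \alpha_a, \qquad \beta_a := c(a)e^{2\pi iam/2^k},\quad \alpha_a := 1-e^{2\pi ia/2^k}.
\]
Each $\alpha_a\in\Q(\zeta_{2^k})$ is a nonzero algebraic number, and each coefficient $\beta_a$ is a nonzero algebraic number because $\sin(\pi a/2^j)\neq 0$ for every odd $a$ and every $j\in\{1,\dots,k\}$. Thus $U_{k,m}$ is a $\overline{\Q}$-linear combination, with nonzero algebraic coefficients, of logarithms of nonzero algebraic numbers.

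I would then invoke the standard consequence of Baker's theorem on linear forms in logarithms: \emph{any such combination is either zero or transcendental}. Consequently, it suffices to show $U_{k,m}\neq 0$.

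To prove $U_{k,m}\neq 0$, I would argue by contradiction using the stronger form of Baker's theorem. If $U_{k,m}=0$, then the logarithms $\log\alpha_a$ satisfy a nontrivial $\overline{\Q}$-linear relation, which by Baker's theorem implies a $\Q$-linear relation and hence a multiplicative relation (modulo roots of unity) among the cyclotomic units $\alpha_a=1-\zeta^a$ with $a$ odd. All such relations are generated by the complex-conjugation identities $\alpha_a\bar\alpha_a=4\sin^2(\pi a/2^k)$ and the single norm identity $\prod_{a\odd}\alpha_a=\Phi_{2^k}(1)=2$. Splitting $U_{k,m}$ into real and imaginary parts via $\log\alpha_a=\log(2\sin(\pi a/2^k))+i(\pi a/2^k-\pi/2)$ and tracking the contributions to $\pi$, to $\log 2$, and to each $\log\sin(\pi a/2^k)$, one verifies that the explicit coefficients $\beta_a$ prescribed by Proposition~\ref{prop:A2n} cannot satisfy all the resulting constraints simultaneously, yielding a contradiction.

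The main obstacle is precisely this last step --- the bookkeeping required to extract a contradiction from the hypothesis $U_{k,m}=0$. A conceptually simpler alternative is to establish $U_{k,m}\neq 0$ analytically, estimating the series $U_{k,m}=\sum_{n\geq 1}f_k(n+m)/n$ by Abel summation using that the partial sums $\sum_{j\leq n}f_k(j+m)$ are bounded (since $f_k$ is $2^k$-periodic with mean zero over a period), and deriving a strictly positive lower bound for $|U_{k,m}|$ via block-by-block estimates.
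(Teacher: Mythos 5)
Your reduction is legitimate as far as it goes: Baker's theorem in the form ``a non-vanishing linear combination of logarithms of algebraic numbers with algebraic coefficients is transcendental'' does reduce the proposition to proving $U_{k,m}\neq 0$. But that nonvanishing is exactly where the substance lies, and neither of your two routes establishes it. In the first route, the assertion that all multiplicative (equivalently, $\Q$-linear) relations among the $\log(1-\zeta^a)$, $a$ odd, come from complex conjugation and the norm identity is precisely the nontrivial arithmetic input: it amounts to the multiplicative independence of the cyclotomic units $w_a=\zeta^{(1-a)/2}(1-\zeta^a)/(1-\zeta)$, which the paper derives from Washington's Lemma~8.1 together with Dirichlet's unit theorem (the number of $w_a$ equals the unit rank $2^{k-2}-1$); moreover, working with the complex logarithms directly forces you to track branches and the $i\pi$ contributions, which is why the paper passes to the system $\log(1-\zeta)$, $i\pi$, $(\log w_a)$ with the $w_a$ real and positive. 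Labelling the remaining verification ``bookkeeping'' and admitting it is ``the main obstacle'' does not discharge it. In the second route, boundedness of the partial sums of $f_k(\cdot+m)$ gives convergence of the series but no lower bound whatsoever: nothing in your sketch excludes $U_{k,m}=0$. A genuine positivity argument is required, for instance the paper's Lemma~\ref{lem:UX}, which shows $f_k(m)g_{k,m}(x)\geq g_k(x)>0$ and hence, as in Corollary~\ref{cor:u}, $f_k(m+1)\,U_{k,m}>0$; that proof rests on the folding property and an Abel summation against the partial-sum function $\V_{k,m}$, not on a routine block-by-block estimate.

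It is also worth noting the structural contrast with the paper's proof, which never needs $U_{k,m}\neq 0$ as an input: it rewrites the formula of Proposition~\ref{prop:A2n} as an algebraic-coefficient combination of $\log(1-\zeta)$, $i\pi$ and the $\log w_a$, proves these are $\Q$-linearly independent (norm of $1-\zeta$ equal to $2$, imaginary parts, independence of the cyclotomic units), observes that the coefficient of $\log(1-\zeta)$ equals $-\eps_m\neq 0$, and then the strong form of Baker's theorem ($\overline\Q$-linear independence of $\Q$-independent logarithms) delivers nonvanishing and transcendence simultaneously. If you completed your second route by proving the positivity statement, you would indeed obtain a genuinely different proof, trading the cyclotomic-unit argument for the analytic lemma; as written, however, the key step is missing.
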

\begin{proof}
The formula in Proposition~\ref{prop:A2n} shows that $U_{k,m}$ is a non-zero linear combination with
algebraic coefficients of logarithms of the numbers $1-\zeta^a$, where $\zeta:=\exp(2\pi i/2^k)$ for
$a=1,\ldots,2^k-1$ odd. For any odd $a$, let $w_a := \zeta^{(1-a)/2}(1-\zeta^a)/(1-\zeta)$, which is well
defined because $(1-a)/2$ is an integer. Notice that $w_a$ is a positive real number for $1\leq
a<2^{k-1}$, and that $w_{2^k-a} = -w_a$; also, $w_1=1$. It follows that the sum of the $\log(1-\zeta^a)$
is also a linear combination with algebraic coefficients of numbers
\begin{equation}\label{eq:indip_num}
\log(1-\zeta),
\qquad
i\pi,
\qquad
\big(\log(w_a)\big)_{a=3,\odd}^{2^{k-1}}.
\end{equation}
The formula also shows that at least one of the coefficients is not zero; for example, the coefficient
of $\log(1-\zeta)$ is $\sum_{a=1}^{2^k-1}c(a)e^{2\pi ia m/2^k}=-\eps_m$. By Baker's theorem on linear
forms in logarithms, the transcendence of $U_{k,m}$ then follows if we can prove that the
numbers~\eqref{eq:indip_num} are $\Q$-linearly independent. Thus suppose we have a linear combination
with integer coefficients producing zero:
\begin{equation}\label{eq:A28n}
\alpha \log(1-\zeta) + \beta i\pi +\sum_{\substack{a=3\\ a\odd}}^{2^{k-1}-1} \gamma_a\log(w_a) = 0.
\end{equation}
We need to show that all the coefficients are zero. Exponentiating this identity yields
\begin{equation}\label{eq:A30n}
(1-\zeta)^\alpha(-1)^\beta\prod_{\substack{a=3\\ a\odd}}^{2^{k-1}-1} w_a^{\gamma_a} = 1.
\end{equation}
In the cyclotomic field $K:=\Q[\zeta]$ of $2^k$ roots of unity, the norm of $1-\zeta$ is equal to $2$,
whereas the norm of each of the $w_a$ is equal to $1$. Thus the previous identity implies that
$\alpha=0$. Taking the imaginary part of~\eqref{eq:A28n} then gives $\beta=0$. As a
consequence~\eqref{eq:A30n} becomes
\begin{equation}\label{eq:A31n}
\prod_{\substack{a=3\\ a\odd}}^{2^{k-1}-1} w_a^{\gamma_a} = 1.
\end{equation}
It is known that the numbers $(w_a)_{\substack{a=3,\, a\odd}}^{2^{k-1}-1}$ are cyclotomic units
generating a subgroup having finite index in the free part of the group of units $U_K$ in $O_K$~\cite[Lemma 8.1]{Washington1}. By Dirichlet's theorem the dimension (as $\Z$-module) of $U_K$ is
$2^{k-2}-1$, and this is also the number of $w_a$ appearing in~\eqref{eq:A31n}. Hence they are
multiplicatively independent and so all the $\gamma_a$ have to be equal to zero, as desired.
\end{proof}
We will now show that $U_{k,m}\in X_k$ for all $k,m$. First, we need the following lemma.
\begin{lem}\label{lem:UX}
For  $k,m\geq 0$ and $x>0$, let
\begin{equation*}
g_{k,m}(x) := \sum_{0\leq n< 2^k} \frac{f_k(n+m)}{n+x}.
\end{equation*}
Then $f_k(m)g_{k,m}(x)\geq g_{k,0}(x)=g_k(x)$ for $x>0$. In particular,
$f_k(m)g_{k,m}(x)> 0$ for $x>0$.
\end{lem}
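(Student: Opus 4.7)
The plan is to reduce the claim to a polynomial positivity statement via a Laplace transform, then halve the problem using the antiperiodicity of $f_k$, and proceed by induction on $k$.

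Using $\frac{1}{n+x}=\int_0^\infty e^{-(n+x)t}\d t$, one writes $g_{k,m}(x)=\int_0^\infty e^{-xt}F_{k,m}(e^{-t})\d t$ with $F_{k,m}(z):=\sum_{n=0}^{2^k-1}f_k(n+m)z^n$, so the claim is implied by the pointwise polynomial inequality $f_k(m)F_{k,m}(z)\geq F_{k,0}(z)=P_k(z):=\prod_{j=0}^{k-1}(1-z^{2^j})$ for $z\in[0,1]$. The folding identity~\eqref{eq:folding2}, combined with the $2^k$-periodicity of $f_k$, gives the antiperiodicity $f_k(j+2^{k-1})=-f_k(j)$ for every $j\in\Z$. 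Splitting the sum in $F_{k,m}$ at $n=2^{k-1}$ and using antiperiodicity to collapse the halves yields the factorization $F_{k,m}(z)=(1-z^{2^{k-1}})G_{k,m}(z)$ with $G_{k,m}(z):=\sum_{n=0}^{2^{k-1}-1}f_k(n+m)z^n$. Since $1-z^{2^{k-1}}\geq 0$ on $[0,1]$ and the reduced inequality $f_k(m)G_{k,m}(z)\geq P_{k-1}(z)$ is invariant under $m\mapsto m+2^{k-1}$ (because $G_{k,m+2^{k-1}}=-G_{k,m}$ and $\eps_{m+2^{k-1}}=-\eps_m$), it is enough to prove this inequality for $m\in[0,2^{k-1})$.

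Splitting the sum for $G_{k,m}(z)$ at $n=2^{k-1}-m$ (the threshold beyond which $\eps_{n+m}$ differs in sign from $f_{k-1}(n+m)$) gives the key identity
\[
G_{k,m}(z)=F_{k-1,m}(z)-2z^{2^{k-1}-m}A_m(z),\qquad A_m(z):=\sum_{n=0}^{m-1}\eps_n z^n.
\]
Combined with the inductive hypothesis $\eps_m F_{k-1,m}(z)\geq P_{k-1}(z)$ on $[0,1]$, the claim reduces to showing that this nonnegative slack dominates $2\eps_m z^{2^{k-1}-m}A_m(z)$ pointwise on $[0,1]$. Iterating the identity downward by alternately invoking the factorization at lower levels (valid as long as $m<2^{k'-1}$) produces a consolidated expression
\[
\eps_m G_{k,m}(z)-P_{k-1}(z)=C_h(z)\bigl[\eps_m F_{k-1-h,m}(z)-P_{k-1-h}(z)\bigr]-2\eps_m A_m(z)D_h(z),
\]
where $C_h(z):=\prod_{i=k-1-h}^{k-2}(1-z^{2^i})\geq 0$ on $[0,1]$ and $D_h(z)$ is the polynomial defined recursively by $D_0(z):=z^{2^{k-1}-m}$ and $D_{i+1}(z)-D_i(z)=C_i(z)(1-z^{2^{k-2-i}})z^{2^{k-2-i}-m}$.

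Taking $h$ maximal (i.e.\ stopping at the level $H=\lceil\log_2(m+1)\rceil$ where $m\geq 2^{H-1}$), the bracketed term is controlled by the inductive hypothesis at level $H$; the base case $k=1$, where $G_{1,m}(z)=\eps_m$ gives $f_1(m)G_{1,m}(z)=1=P_0(z)$, is immediate. The main obstacle is the residual control of $2\eps_m A_m(z)D_h(z)$: since $A_m(z)$ is not sign-definite on $[0,1]$ (for instance $A_3(z)=1-z-z^2$ is negative near $z=1$), this requires exploiting that $D_h(z)$ factors as $z^{2^H-m}\phi_h(z^{2^H})$ for an explicitly nonnegative polynomial $\phi_h$, together with the dyadic structure of $m$, to show that the positive and negative contributions of $A_m(z)$ cancel compatibly with the zeros of $P_H(z)$ on $[0,1]$.
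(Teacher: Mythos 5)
Your reduction steps are correct as far as they go: the Laplace-transform identity $g_{k,m}(x)=\int_0^\infty e^{-xt}F_{k,m}(e^{-t})\,\mathrm{d}t$ does reduce the lemma to the pointwise inequality $f_k(m)F_{k,m}(z)\geq P_k(z)$ on $[0,1]$; the factorization $F_{k,m}(z)=(1-z^{2^{k-1}})G_{k,m}(z)$ via antiperiodicity and the identity $G_{k,m}(z)=F_{k-1,m}(z)-2z^{2^{k-1}-m}A_m(z)$ for $m\in[0,2^{k-1})$ are both correct, and the target polynomial inequality is in fact true. But the proof is not complete: the decisive step is missing. After the descent you must show
\begin{equation*}
C_h(z)\bigl[\eps_m F_{H,m}(z)-P_H(z)\bigr]\;\geq\; 2\,\eps_m A_m(z)\,D_h(z)
\qquad (z\in[0,1]),
\end{equation*}
and the inductive hypothesis only gives \emph{nonnegativity} of the bracket, not a quantitative lower bound. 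Since $\eps_m A_m(z)$ genuinely changes sign (your own example $m=3$), and since at $z=1$ one has $C_h(1)=0$ while $D_h(1)=1$, the domination cannot follow from nonnegativity alone; it would require strengthening the induction to an explicit lower bound on the slack $\eps_m F_{H,m}-P_H$ compatible with the zeros of $P_H$, which is exactly the part you label ``the main obstacle'' and leave as a plan (including the unverified claims that $\phi_h\geq 0$ and that the cancellations in $A_m$ ``cancel compatibly''). As it stands, the hard content of the lemma has been reformulated, not proved.

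For comparison, the paper avoids this difficulty entirely by a summation-by-parts argument: it proves the partial-sum inequality $f_k(m)\sum_{n\leq r}f_k(n+m)\geq \sum_{n\leq r}f_k(n)$ directly for odd $m$ (equation~\eqref{eq:fa}), reduces general $m=2^\nu m'$ to the odd case through the folding identity $g_{k,m}(x)=\sum_r f_{k-\nu}(r+m')g_\nu(2^\nu r+x)$, and then applies Abel summation against $-g_\nu'>0$ (Lemma~\ref{lem:g_signs}). The same mechanism would in fact rescue your reduction: after writing $F_{k,m}(z)=P_\nu(z)F_{k-\nu,m'}(z^{2^\nu})$ and $P_k(z)=P_\nu(z)P_{k-\nu}(z^{2^\nu})$, the inequality $f_{k'}(m')F_{k',m'}(w)\geq P_{k'}(w)$ for odd $m'$ follows by Abel summation of the coefficient sequence against the nonnegative weights $w^r-w^{r+1}$, using the same partial-sum inequality. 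If you replace your inductive $A_m$-descent by this partial-sum argument, your Laplace-transform framing becomes a complete (and essentially equivalent) proof; without it, there is a genuine gap.
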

\begin{proof}
For $k\leq 1$ the result is obvious by definition, so assume that $k\geq2$. Let
\begin{equation*}
\V_{k,m}(r):=\sum_{0\leq n\leq r}f_k(n+m)
\end{equation*}
and notice that since $\V_{k,m}(2^{k}-1)=0$, one has that $\V_{k,m}(r+2^k)=\V_{k,m}(r)$ for all $r\geq 0$.
Moreover, one easily verifies that
\begin{align*}
\V_{k,0}(r)
= \begin{cases}
f_k(r) , & r\text{ even};\\
     0  , & r\text{ odd};
\end{cases}
\end{align*}
(cf.~Proposition~\ref{prop:A4n} below). Thus, for all odd $m$ we have
\begin{align*}
f_k(m)\V_{k,m}(r)
&= f_k(m)(\V_{k,0}(r+m)-\V_{k,0}(m-1))
 = f_k(m)\V_{k,0}(r+m)+1\\
&= \begin{cases}
    1                , & r\text{ even};\\
    f_k(m)f_k(r+m)+1 , & r\text{ odd};
   \end{cases}
\end{align*}
where in the second step we used that $f_k(m-1)f_k(m)=-1$ for $m$ odd. In particular if $m$ is odd, then
\begin{align}\label{eq:fa}
f_k(m)\V_{k,m}(r)-\V_{k,0}(r)\geq 0.
\end{align}
To conclude, we observe that, if  $m=2^\nu m'< 2^k$ with $m'$ odd (or if $m=m'=0$, $\nu<k$), then
\begin{equation*}
g_{k,m}(x)
= \sum_{n=0}^{2^k-1} \frac{f_k(n+m)}{n+x}
= \sum_{\ell=0}^{2^\nu-1}\sum_{r=0}^{2^{k-\nu}-1} \frac{f_k(\ell+2^{\nu}(r+m'))}{\ell+2^\nu r+x}
= \sum_{r=0}^{2^{k-\nu}-1}f_{k-\nu}(r+m')g_{\nu}(2^\nu r+x)
\end{equation*}
since~\eqref{eq:folding} implies that $f_k(\ell+2^\nu s) = \eps(\ell)f_{k}(2^{\nu}s) =
\eps(\ell)f_{k-\nu}(s)$ for $0\leq \ell<2^\nu< 2^k$. Thus, by Abel's summation formula,
\begin{align*}
f_k(m)g_{k,m}(x)-g_{k,0}(x)
&= \sum_{0\leq r<  2^{k-\nu}}\big(f_{k-\nu}(m')f_{k-\nu}(r+m')-f_{k-\nu}(r)\big)g_{\nu}(2^\nu r+x)\\
&= -2^\nu\int_{0}^{2^{k-\nu}-1}\big(f_{k-\nu}(m')\V_{k-\nu,m'}(y)-\V_{k-\nu,0}(y)\big)g'_{\nu}(2^\nu y+x)\,{\mathrm d}y
\end{align*}
and the result follows by~\eqref{eq:fa} and Lemma~\ref{lem:g_signs}.
\end{proof}
\begin{cor}\label{cor:u}
For all $k\geq 1$ and $m \in [1,2^k)$ the identity $U_{k,m} = \sum_{n=1}^\infty f_k(n+m)/n$ gives the greedy
representation for $U_{k,m}$, i.e., $s_n(U_{k,m}) = f_k(n+m)$ for all $n$. In particular, $U_{k,m}\in
X_k$.
\end{cor}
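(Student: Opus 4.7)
The plan is to argue by induction on $n$ that $s_n(U_{k,m}) = f_k(n+m)$ for every $n \geq 1$; the conclusion $U_{k,m} \in X_k$ then follows directly from the definition~\eqref{eq:def_x_k} (taking $N = 1$). The base case and the inductive step are handled by the same computation, because the greedy algorithm~\eqref{eq:def_sn} specifies $s_n$ through the sign of $\tau - \sigma_{n-1}$, and this sign can be controlled uniformly once the previous signs are assumed to match $f_k(\cdot + m)$.

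First, I would check absolute convergence of the series defining $U_{k,m}$ by grouping the summands in consecutive blocks of length $2^k$: each block contributes a term of the form $g_{k,m'}(x)$, which is $O(x^{-k-1})$ by Lemmas~\ref{lem:g_signs} and~\ref{lem:UX}. This allows the rearrangements that follow. Then, assuming inductively that $s_j(U_{k,m}) = f_k(j+m)$ for $1 \leq j \leq n-1$, I would write
\begin{equation*}
U_{k,m} - \sigma_{n-1}(U_{k,m}) = \sum_{j=n}^{\infty} \frac{f_k(j+m)}{j}
\end{equation*}
and reindex $j = n + 2^k i + r$ with $i \geq 0$ and $0 \leq r < 2^k$. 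Using the $2^k$-periodicity $f_k(j+m) = f_k(n+m+r)$, this rearranges to
\begin{equation*}
U_{k,m} - \sigma_{n-1}(U_{k,m}) = \sum_{i=0}^{\infty} g_{k,n+m}(n + 2^k i).
\end{equation*}

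At this point the key step is to invoke Lemma~\ref{lem:UX}, which gives $f_k(n+m)\, g_{k,n+m}(x) \geq g_k(x) > 0$ for every $x > 0$. Summing over $i \geq 0$ therefore yields
\begin{equation*}
f_k(n+m) \bigl(U_{k,m} - \sigma_{n-1}(U_{k,m})\bigr) > 0,
\end{equation*}
so that $U_{k,m} - \sigma_{n-1}(U_{k,m})$ has the same (nonzero) sign as $f_k(n+m)$. By the definition~\eqref{eq:def_sn} of the greedy algorithm, this forces $s_n(U_{k,m}) = f_k(n+m)$, closing the induction. The base case $n=1$ is just this argument with $\sigma_0 = 0$: the positivity $f_k(1+m)\, U_{k,m} > 0$ coming from the same formula above (with $n=1$) immediately gives $s_1(U_{k,m}) = f_k(1+m)$.

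The only nontrivial ingredient is the positivity statement of Lemma~\ref{lem:UX}; once that is available, no further analysis of the tail or of the cancellation structure of the Thue--Morse sequence is required, because the block decomposition isolates all the sign information into the single factor $f_k(n+m)$ in each block. Thus the main obstacle was already overcome in the preceding lemma, and the corollary reduces to the bookkeeping above.
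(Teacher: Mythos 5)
Your proposal is correct and follows essentially the same route as the paper: an induction on $n$ in which the tail $U_{k,m}-\sigma_{n-1}(U_{k,m})$ is regrouped into length-$2^k$ blocks as $\sum_{i\geq 0} g_{k,n+m}(n+2^k i)$, and the positivity statement of Lemma~\ref{lem:UX} pins down the sign, hence $s_n(U_{k,m})=f_k(n+m)$. (Only a cosmetic remark: the series is conditionally, not absolutely, convergent; the block regrouping is justified simply because grouping consecutive terms of a convergent series preserves its sum.)
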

\begin{proof}
By the definition of $U_{k,m}$, for all $N\geq 0$ we have
\begin{align*}
U_{k,m}
&= \sum_{n=1}^{N} \frac{f_k(n+m)}{n}
  + \sum_{n=N+1}^{+\infty} \frac{f_k(n+m)}{n}
 = \sum_{n=1}^{N} \frac{f_k(n+m)}{n}
  + \sum_{s=0}^{+\infty} g_{k,m+N+1}(N+1+s2^k).
\end{align*}
Lemma~\ref{lem:UX} then shows that
\begin{equation}\label{eq:B1}
f_k(m+N+1)\Big(U_{k,m} - \sum_{1\leq n\leq N} \frac{f_k(n+m)}{n}\Big)
> 0
\end{equation}
for every $N$. This fact gives the claim by induction. Indeed, this inequality with $N=0$ shows that
\begin{equation*}
f_k(m+1)>0
\iff
U_{k,m}>0
\iff
s_1(U_{k,m})>0,
\end{equation*}
proving that $f_k(m+1)= s_1(U_{k,m})$. Moreover, if the claim holds for all $n\leq N$, then
$\sum_{n=1}^N \frac{f_k(n+m)}{n} = \sigma_N(U_{k,m})$ and~\eqref{eq:B1} gives
\begin{equation*}
f_k(m+N+1)(U_{k,m} - \sigma_{N}(U_{k,m})) \geq 0,
\end{equation*}
and so $s_{N+1}(U_{k,m}) = f_k(N+1+m)$, as desired.
\end{proof}
We record some properties of the sets $X_{k}$  in the following proposition.
\begin{prop}\label{prop:X}
For all $k\geq 1$ we have $X_k\cap\overline\Q=\emptyset$.
Moreover, the sets $X_1,X_2,\dots$ are non-empty, countable and pairwise disjoint.
\end{prop}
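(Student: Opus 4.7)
The plan is to reduce every part of the statement to two inputs already at our disposal: the structural description of $X_k$ in terms of the constants $U_{k,m}$, and the transcendence of these constants (Proposition~\ref{prop:A3n}).

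\emph{Structural reduction.} Given $\tau\in X_k$, the definition~\eqref{eq:def_x_k} furnishes $N\geq 0$ and $m\in[0,2^k)$ with $s_n(\tau)=f_k(n+m)$ for all $n\geq N$. Splitting the greedy series at index $N$ and adding and subtracting the head of $U_{k,m}$, I would write
\[
\tau=\sigma_{N-1}(\tau)+\sum_{n\geq N}\frac{f_k(n+m)}{n}=U_{k,m}+\Bigl(\sigma_{N-1}(\tau)-\sum_{n=1}^{N-1}\frac{f_k(n+m)}{n}\Bigr),
\]
where the quantity in parentheses is rational. Hence $X_k\subseteq\bigcup_{m=0}^{2^k-1}(U_{k,m}+\Q)$, which is a countable union of countable sets, establishing countability. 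Non-emptiness follows from Corollary~\ref{cor:u}, which directly gives $U_{k,1}\in X_k$. Finally, the statement $X_k\cap\overline\Q=\emptyset$ is immediate from Proposition~\ref{prop:A3n}, since every element of $U_{k,m}+\Q$ is transcendental whenever $U_{k,m}$ is.

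\emph{Pairwise disjointness.} For this I would first record that $f_k$ has minimal period exactly $2^k$. Any period of $f_k$ must divide $2^k$, so it is enough to rule out $2^{k-1}$: but $f_k(0)=\eps_0=1$ and $f_k(2^{k-1})=\eps_{2^{k-1}}=-1$, since $2^{k-1}$ has a single $1$ in its binary expansion. Suppose now that $\tau\in X_k\cap X_{k'}$ with $k<k'$, and pick $N$ large enough that $s_n(\tau)=f_k(n+m)=f_{k'}(n+m')$ holds simultaneously for all $n\geq N$. The $2^k$-periodicity of $f_k$ then forces $f_{k'}(n+m'+2^k)=f_{k'}(n+m')$ for every $n\geq N$; since $f_{k'}$ itself is $2^{k'}$-periodic the identity propagates to all integers, making $2^k$ a period of $f_{k'}$. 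This contradicts the minimality established above.

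\emph{Main obstacle.} There is no real analytic obstacle in the proposition itself: the only substantive input is the transcendence of $U_{k,m}$, which is what Proposition~\ref{prop:A3n} supplies via Baker's theorem and the multiplicative independence of the cyclotomic units $w_a$. Once that ingredient is available, the reduction to $U_{k,m}+\Q$ is just bookkeeping, and the disjointness reduces to the elementary minimal-period calculation outlined above.
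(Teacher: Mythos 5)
Your proposal is correct and follows essentially the same route as the paper: reduce $X_k$ to $\Q+\{U_{k,m}\colon 0\leq m<2^k\}$ for countability and apply Proposition~\ref{prop:A3n} for $X_k\cap\overline\Q=\emptyset$, use Corollary~\ref{cor:u} for non-emptiness, and derive disjointness from the fact that the minimal period of $f_k$ is exactly $2^k$ (via $f_k(0)=1=-f_k(2^{k-1})$). The only cosmetic slip is the phrase ``any period of $f_k$ must divide $2^k$,'' which should read that the \emph{minimal} period divides $2^k$; the argument is unaffected.
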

\begin{proof}
Since $U_{k,0}\in X_k\subseteq \Q+\{U_{k,m} \colon 0\leq m<2^{k}\}$, the set $X_k$ is non-empty,
countable and, thanks to Proposition~\ref{prop:A3n}, we have $X_k\cap\overline\Q=\emptyset$. Now we show
that $X_k\cap X_h=\emptyset$ for all $h\neq k$. Assume that $\tau\in X_k\cap X_h$ with $h\leq k$. Thus,
by definition, there exists $m_1,m_2$ such that $s_n=f_k(n+m_1)=f_h(n+m_2)$ for all sufficiently large
$n$. In particular, $f_k(n)$ is periodic modulo $2^h$. Since $f_k(0)=1=-f_{k}(2^{k-1})$, one has
$h\geq k$ and so $h=k$.
\end{proof}

\subsection{Verifying whether $\tau\in X_k$ in a finite number of steps}
For each $k\geq 1$, the set $X_k$ is defined as the set of all $\tau\in\R$ whose greedy sequence is
eventually periodic with repeating block $B_k$. Since there is no effective bound on when the sequence
starts being periodic, one might then expect that there is no algorithm which verifies whether a given
number $\tau$ is in $B_k$. Surprisingly, however, such an algorithm can be constructed. We assume
that $\tau\not\in\Q$, since otherwise we already know $\tau\notin X_k$ for all $k$. Also, we assume that we
know $\tau$ to arbitrary precision and that we can tell whether two real numbers coincide (actually, this
is not needed if we already know that $\tau=r+U_{k,m}$ for some $k,m$ and some $r\in\Q$).

The algorithm is rather simple and proceeds as follows.
\begin{enumerate}
\item {\bf Determine whether $\tau\in X_1$. } In order to determine if $\tau\in X_1$, one starts by
      finding the first $N_1\geq 24$ such that $(s_{m})_{N_1\leq m< N_1+2^{1}}=B_{1}$. By
      Proposition~\ref{prop:A1n} we know that such $N_1$ exists and by the upper bound for the harmonic
      sum we also know that $N_1\leq \max(24,e^{|\tau|}+2)$. Then we claim that one has $\tau\in X_1$
      (and thus $\tau\notin X_k$ for all $k>1$) if and only if
\begin{equation*}
\tau = U_{1,m_1}+\sum_{1\leq n<N_1}\frac{s_n-f_1(n+m_1)}{n}
\end{equation*}
      where $0\leq m_1<2^{1}$, $m_1\equiv -N_1\pmod 2$. In particular, the algorithm stops if this
      equality is satisfied and otherwise it moves to the next step. To prove this equivalence, we first
      observe that by Corollary~\ref{cor:A4n} we have
\begin{equation*}
(s_{n})_{n\geq N_1}=(B_1,\delta_1\cdot B_1,\delta_2\cdot B_1,\dots)
\end{equation*}
      with $\delta_i\in \{\pm 1\}$. Also, we have $\tau\in X_1$ if and only if $\delta_i=1$ for all $i$.
      Indeed, if $\delta_i=1$ for all $i$ then by definition  $\tau\in X_1$. Conversely, if $\delta_i=-1$
      for some $i$ then $(s_{n})_{n\geq N_1}$ contains the block $(B_1,-B_1)$ and so, by
      Corollary~\ref{cor:A4n},
\begin{equation*}
(s_{n})_{n\geq N'}=(B_2,\delta_1'\cdot B_2,\delta_2'\cdot B_2,\dots).
\end{equation*}
      for some $N'\geq N_1$. In particular, the tail of $(s_n)_n$ cannot be periodic with repeating block
      $B_1$, i.e., $\tau\notin X_1$. Finally, one easily sees that  $\delta_i=1$ for all $i$, i.e.,
\begin{equation*}
(s_{n})_{n\geq N_1}=(B_1, B_1, B_1,\dots),
\end{equation*}
      if and only if
\begin{equation*}
\tau = \sum_{1\leq n<N_1}\frac{s_n}{n}+\sum_{n\geq N_1}\frac{f_1(n+m_1)}{n}
     = \sum_{1\leq n<N_1}\frac{s_n-f_1(n+m_1)}{n}+U_{1,m_1}.
\end{equation*}
\end{enumerate}
\smallskip

One then proceeds inductively. Assuming $\tau\notin X_{k-1}$ with $k\geq2$, to verify whether $\tau\in
X_k$ one proceeds as follows.

\smallskip
\begin{enumerate}
\item[(2)] {\bf Determine the first $N_k\geq 2^{k+2}(k+2)$ such that $(s_{m})_{N_k\leq m< N_k+2^{k}}=B_{k}$}.
      Since $\tau\notin X_{k-1}$ we know that such an $N_k$ exists. We can also provide an explicit bound
      for it in terms of $\tau$ and $(s_{n})_{n<N_{k-1}}$; we shall give the details at the end of the algorithm.

\item[(3)] {\bf Determine whether $\tau\in X_k$. } In the same way as in step (1), one has that $\tau\in X_k$ if
      and only if
\begin{equation*}
\tau = U_{k,m_k}+\sum_{1\leq n<N_k}\frac{s_n-f_k(n+m_k)}{n}
\end{equation*}
where $0\leq m_k<2^{k}$, $m_k\equiv -N_k\pmod {2^k}$.
\end{enumerate}
\smallskip

As anticipated, given $\tau\notin X_k$ and the first $N_{k-1}\geq 2^{k+1}(k+1)$ such that
$(s_{m})_{N_2\leq m< N_2+2^{k-1}}=B_{k-1}$, one can give an upper bound on $N_k$. Indeed, since
$\tau\notin X_{k-1}$ we know that
\begin{align*}
G_{\tau,N_{k-1}}:=\tau - U_{k-1,m_{k-1}}+ \sum_{n< N_{k-1}} \frac{f_{k-1}(n+m_{k-1})-s_n}{n}\neq0.
\end{align*}
Then we have $N_k\leq \max\big(N_{k-1}, 2^{k+2}(k+2), 2^k+4/|G_{\tau,N_{k-1}}|\big)$. To see this, we
observe that, by definition, for any $M\geq N_{k-1}$ we have
\begin{align*}
G_{\tau,N_{k-1}}
&= \sum_{N_{k-1}\leq n< M} \frac{s_n-f_{k-1}(n+m_1)}{n}
  + \sum_{n\geq M} \frac{s_n}{n}- \sum_{n\geq M} \frac{f_{k-1}(n+m_{k-1})}{n}.
\end{align*}
In particular,
\begin{align}\label{eq:effective}
\sum_{N_{k-1}\leq n< M} \frac{|s_n-f_{k-1}(n+m_{k-1})|}{n}
> |G_{\tau,N_{k-1}}|
 - \bigg|\sum_{n\geq M} \frac{s_n}{n}\bigg|
 - \bigg|\sum_{n\geq M} \frac{f_{k-1}(n+m_{k-1})}{n}\bigg|.
\end{align}
By Proposition~\ref{prop:A1n} and Corollary~\ref{cor:u} both sums on the right are bounded by $2/M$.
Thus,~\eqref{eq:effective} implies
\begin{equation*}
\sum_{N_{k-1}\leq n< M} \frac{|s_n-f_{k-1}(n+m_{k-1})|}{n}
> |G_{\tau,N_{k-1}}| - \frac{4}{M}.
\end{equation*}
Taking $M$ to be the largest integer such that $M\equiv N_{k-1}\pmod {2^{k-1}}$ and
$M<2^k+4/|G_{\tau,N_{k-1}}|$, we have that the right hand side is greater than zero. Thus, with this
choice we obtain
\begin{equation*}
\sum_{N_{k-1}\leq n< M} \frac{|s_n-f_{k-1}(n+m_{k-1})|}{n} > 0
\end{equation*}
and so $s_n\neq f_{k-1}(n+m_{k-1})$ for some $n \in [N_{k-1},M)$. Since $(s_n)_{n\geq N_{k-1}}$ can be
written in terms of blocks $\pm B_{k-1}$ it follows that $(s_n)_{N_{k-1}\leq n<M}$ has to contain a block
$(B_{k-1},-B_{k-1})=B_k$.
\begin{remark}
To give some examples, using the above algorithm we verified  that $U_{2,0}+r\in X_2$ if $r=1,2,3$ and
$U_{2,0}+r\notin X_2$ if $r=4,\dots, 10$.
\end{remark}

\section{Proof of Theorem~\ref{thm:asymp}}
\begin{defin}
For $\tau\notin Y_{\infty}$ and $h\geq 1$, let $n_h$ be the minimum integer such that  $|\tau -
\sigma_{n_{h}-1}(\tau)| < g_{h-1}(n_h)$ and $n_h\geq 2^{h} h$. Notice that by Corollary~\ref{cor:corblo}
and~\eqref{eq:signs_to_ineq_2} we know that such an integer exists.
\end{defin}
The following lemma gives some information on the sequence $(n_h)_h$ and in particular it put a rather
sharp limit for the possible sign patterns in $(s_n)$ for $n_h\leq n<n_{h+1}$.
\begin{lem}\label{lem:imp_thm2}
Assume that $\tau\notin (Y_{\infty}\cup\Q)$. The sequence $(n_{h})_h$ is non-decreasing and such that
$n_{h+1}\equiv n_h\pmod {2^h}$ for all $h\geq 1$; also, $n_1\leq 4 e^{|\tau|}$. Moreover, if $n_h<n_{h+1}$
we have
\begin{equation*}
(s_n)_{n_h\leq n< n_{h+1}}=(\eta_{0,h}\cdot B_{h},\eta_{1,h}\cdot B_h, \dots, \eta_{r_h,h} \cdot B_h),
\end{equation*}
where $r_h:=(n_{h+1}-n_h)/2^h-1$ and $\eta_{i,h}\in\{\pm 1\}$ is such that $\eta_{i,h}=s_{n_{h+1}}$ for
all $i \in [0,r_h]$ if $n_{h}\geq 2^{h+1}(h+1)$ and for all $i \in [h + 2,r_h]$ in any case.
\end{lem}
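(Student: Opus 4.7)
The non-decreasing property of $(n_h)_h$ is immediate from Corollary~\ref{cor:A2n} (which gives $g_h(x)<g_{h-1}(x)$) and from $2^{h+1}(h+1) \geq 2^h h$: any $n$ satisfying the conditions defining $n_{h+1}$ also satisfies those defining $n_h$. For the bound $n_1 \leq 4 e^{|\tau|}$, I would argue that the first sign change of $(s_n)_n$ occurs at some $N_0 \leq e^{|\tau|}+1$ (otherwise the harmonic growth $H_n \leq 1+\log n$ would contradict $|\sigma_{N_0}| > |\tau|$), after which Proposition~\ref{prop:A1n} gives $|\sigma_n - \tau| \leq 2/(n+1)$; a short check using the alternating pattern of Proposition~\ref{prop:A1n} exhibits some $n \leq 4 e^{|\tau|}$ with $|\sigma_{n-1} - \tau| < 1/n = g_0(n)$.

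The key tool for the remaining claims is Corollary~\ref{cor:A4n}, which promotes any single $\pm B_h$ block at a position $n \geq 2^{h+1}(h+1)$ to an infinite concatenation of $\pm B_h$ blocks aligned at $n$. Assume first the ``moreover'' hypothesis $n_h \geq 2^{h+1}(h+1)$. Applying Lemma~\ref{lem:A8n} at $n_h$ with $k=h-1$ (using whichever hypothesis form is selected by the sign of $\tau - \sigma_{n_h-1}$) shows that $(s_m)_{n_h \leq m < n_h + 2^h}$ is $\pm B_h$, and Corollary~\ref{cor:A4n} with $k=h$ then gives $(s_m)_{m \geq n_h} = (\delta_0 \cdot B_h, \delta_1 \cdot B_h, \ldots)$ for some $\delta_i \in \{\pm 1\}$. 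This alignment forces $n_{h+1}$ to be of the form $n_h + j \cdot 2^h$: at any non-aligned $n = n_h + i \cdot 2^h + r$ with $0 < r < 2^h$, the decomposition
\[
\tau - \sigma_{n-1} = \delta_i \sum_{\ell=r}^{2^h-1} \frac{\eps_\ell}{n_h + i\cdot 2^h + \ell} + \sum_{j > i} \delta_j\, g_h(n_h + j\cdot 2^h)
\]
combined with Lemma~\ref{lem:A7n} (which bounds the first sum below by $g_h(n_h + i \cdot 2^h - 2^h)$ in magnitude) and the smallness of the tail shows $|\tau - \sigma_{n-1}| > g_h(n)$, so $n$ is not a candidate for $n_{h+1}$. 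Applying the same type of estimate to $\tau - \sigma_{n_h + i \cdot 2^h - 1} = \sum_{j \geq i} \delta_j\, g_h(n_h + j\cdot 2^h)$, the ratio bound of Lemma~\ref{lem:A5n} shows that if some $\delta_i$ with $i < r_h$ disagreed with $s_{n_{h+1}}$, the improved inequality $|\tau - \sigma_{\cdot}| < g_h(\cdot)$ would already hold at an aligned position before $n_{h+1}$, contradicting minimality; hence all $\delta_i$ coincide with $s_{n_{h+1}}$, yielding the uniformity of $\eta_{i,h}$.

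When $n_h < 2^{h+1}(h+1)$, Corollary~\ref{cor:A4n} cannot be invoked at $n_h$ directly, so I would iterate Lemma~\ref{lem:A8n} with $k = h-1$ starting from $n_h$: each application produces one more $\pm B_h$ block (with sign dictated by the alternation built into Lemma~\ref{lem:A8n}) and either improves the inequality to $g_h$ or shifts the $g_{h-1}$-inequality forward by $2^{h-1}$. Since $n_h \geq 2^h h$ and each step advances by at least $2^{h-1}$, at most $h+2$ full $2^h$-steps suffice to reach a position $\geq 2^{h+1}(h+1)$, after which the previous analysis takes over. The first few blocks in this transition regime may carry mixed signs, which is precisely why uniformity $\eta_{i,h} = s_{n_{h+1}}$ is only guaranteed for $i \geq h+2$ in the general statement.

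The main obstacle is the quantitative bookkeeping needed to exclude those branches of Lemma~\ref{lem:A8n} that would place $n_{h+1}$ at a position non-congruent to $n_h$ modulo $2^h$; conceptually this is forced by the aligned decomposition of Corollary~\ref{cor:A4n}, which pins down $\tau - \sigma_{n_h - 1}$ to the narrow range $\sum_{i \geq 0} \delta_i\, g_h(n_h + i \cdot 2^h)$, but verifying that the ``bad'' branches produce values outside this range requires the sharp estimates for $g_k$ collected in Lemmas~\ref{lem:g_signs},~\ref{lem:A5n}, and~\ref{lem:A7n}.
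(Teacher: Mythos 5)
Your treatment of monotonicity, of the bound on $n_1$, and of the uniformity of the signs $\eta_{i,h}$ is sound and close to the paper's; in fact the uniformity needs nothing like Lemma~\ref{lem:A5n}: if $\eta_{j,h}\neq\eta_{j+1,h}$ then $s_{n'}=-s_{n'+2^h}$ at $n'=n_h+2^hj$, so \eqref{eq:signs_to_ineq_2} gives $|\tau-\sigma_{n'-1}|\leq g_h(n')$ (strict since $\tau\notin\Q$), and minimality of $n_{h+1}$ forces $n'<2^{h+1}(h+1)$, which is exactly the paper's argument. The genuine gap is in your route to the congruence $n_{h+1}\equiv n_h\pmod{2^h}$. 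You want to show directly that $|\tau-\sigma_{n-1}|>g_h(n)$ at every non-aligned $n$, citing Lemma~\ref{lem:A7n} as bounding $\bigl|\sum_{\ell=r}^{2^h-1}\eps_\ell/(x+\ell)\bigr|$ from below by $g_h(x-2^h)$. Two problems: Lemma~\ref{lem:A7n} concerns prefix sums $\sum_{0\leq\ell<r}$, not tails of a block, so it does not apply as cited; and even granting a lower bound of size $g_h(x-2^h)\asymp c_h/x^{h+1}$, it is too small, because the remaining series $\sum_{j>i}\delta_j\,g_h(n_h+j2^h)$ can be as large as roughly $2^{-h}\int_x^{\infty}g_h\approx c_{h-1}/(2x^h)$, which exceeds $c_h/x^{h+1}$ for all $x\geq 2^{h+1}(h+1)$ (recall $c_h=h2^{h-1}c_{h-1}$). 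So the triangle inequality as you invoke it cannot close: in the worst offset $r=2^{h-1}$ the block tail is exactly $-g_{h-1}(x+2^{h-1})$ and one must beat the series by a telescoping bound such as $g_h(y)<g_{h-1}(y)-g_{h-1}(y+2^h)$, i.e.\ by estimates at the $g_{h-1}$ level, not at the $g_h$ level. The paper sidesteps all of this: it applies Lemma~\ref{lem:A8n} at $n_{h+1}$ itself (legitimate since $n_{h+1}\geq 2^{h+1}(h+1)$), which forces a full block $\pm B_{h+1}$ to begin at $n_{h+1}$, and such a block cannot begin at a position that is non-aligned modulo $2^h$ inside a concatenation of blocks $\pm B_h$; this yields the congruence with no size estimates at all.

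The second gap is the transition regime $2^hh\leq n_h<2^{h+1}(h+1)$. Iterating Lemma~\ref{lem:A8n} with $k=h-1$ advances by $2^{h-1}$, not by a full block of length $2^h$, and produces overlapping windows of signs; your sketch does not show that these windows patch into blocks $\pm B_h$ aligned at $n_h$ on all of $[n_h,n_{h+1})$, and, more seriously, it does not exclude the possibility that the improved inequality $|\tau-\sigma_{n-1}|<g_h(n)$ is first attained, past the threshold $2^{h+1}(h+1)$, at a position $\equiv n_h+2^{h-1}\pmod{2^h}$ --- precisely the event that would falsify the congruence and which your ``the previous analysis takes over'' step tacitly assumes cannot happen. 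In the paper this is ruled out by the decomposition into $\pm B_h$ blocks furnished by Lemma~\ref{lem:A8n} and Corollary~\ref{cor:A4n} together with the forced $\pm B_{h+1}$ block at $n_{h+1}$; your proposal offers no substitute for that mechanism, so as written neither the congruence nor the stated block structure of $(s_n)_{n_h\leq n<n_{h+1}}$ is established in this regime.
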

\begin{proof}
The fact that $n_h$ is non-decreasing follows immediately from  Corollary~\ref{cor:A2n}, whereas the
inequality $n_1\leq \max(e^{|\tau|}+1,4)\leq 4e^{|\tau|}$ follows from bounding the harmonic sum.

By Lemma~\ref{lem:A8n} and Corollary~\ref{cor:A4n} we have
\begin{equation*}
(s_n)_{n\geq n_h}=(\eta_{0,h}\cdot B_{h},\eta_{1,h}\cdot B_h, \dots),
\end{equation*}
for some $(\eta_{i,h})_{i\geq 0}$ with $\eta_{i,h}\in\{\pm 1\}$.  By Lemma~\ref{lem:A8n} we have
$(s_{n})_{n_{h+1}\leq n<n_{h+1}+2^{h+1}}=B_{h+1}$ and so it must be $n_{h+1}= n_h+ 2^{h}r_h$ for some
$r_h\geq 0$.

We claim that if $\eta_{j,h}\neq \eta_{j+1,h}$ for some $j \in [0,r_h]$ (with
$\eta_{r_{h}+1,h}:=\eta_{0,h+1}$), then $n_h+2^h j<2^{h+1}(h+1)$. Indeed, if $\eta_{j,h}\neq
\eta_{j+1,h}$, then Equation~\eqref{eq:signs_to_ineq_2} implies that $|\sigma_{n'-1}-\tau|< g_{h}(n')$
with $n':=n_h+2^h j$ (the inequality is strict since $\tau\notin\Q$). Since $n'<n_{h+1}$ it then follows
that it must be $n'=n_h+2^h j<2^{h+1}(h+1)$, as claimed. This implies in particular that if $n_{h}\geq
2^{h+1}(h+1)$ then $\eta_{i,h}=\eta_{i+1,h}$ for $0\leq i\leq r_h$ or, equivalently,
$\eta_{i,h}=s_{n_{h+1}}$ for $0\leq i\leq r_h$ since $\eta_{r_h+1,h}=s_{n_{h+1}}$. Similarly, if
$h+2\leq  i\leq r_h$ then $n_h+2^h i\geq 2^h h+ (h+2)2^h=2^{h+1}(h+1)$ and so one concludes as before.
\end{proof}
Thanks to the above lemma we have a good control on the sequence $(n_h)_h$.  We now need two combinatorial
lemmas. The first one is a well known upper bound for the number of lattice point in a
simplex~\cite{Beged-Dov}.
We give a simple proof for completeness.
\begin{lem}\label{lem:hyperpyramid}
Let $k,m\geq 1$ and $b_1,\dots,b_k\in\N$.
Then
\begin{equation*}
S:=|\{(a_1,\dots,a_k)\in\Z_{\geq 0}^k\colon b_1a_1+b_2a_2+\dots+b_ka_k = m\}|
\leq \frac{(m+b_2+\cdots+b_k)^{k-1}}{(b_2\cdots b_k)(k-1)!}.
\end{equation*}
\end{lem}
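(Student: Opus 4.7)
The plan is to bound $S$ by counting lattice points in a simplex, using the standard trick of associating each lattice point with a unit box and estimating the total volume these boxes occupy.

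First I would reduce dimension: since $b_1 a_1 \geq 0$, any solution $(a_1,\dots,a_k)$ of $b_1 a_1 + \cdots + b_k a_k = m$ must satisfy $b_2 a_2 + \cdots + b_k a_k \leq m$, and conversely $a_1$ is determined by the remaining coordinates (if $m - b_2 a_2 - \cdots - b_k a_k$ is divisible by $b_1$, otherwise there is no solution). Hence
\begin{equation*}
S \leq \bigl|\{(a_2,\dots,a_k) \in \Z_{\geq 0}^{k-1} \colon b_2 a_2 + \cdots + b_k a_k \leq m\}\bigr|.
\end{equation*}

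Next I would bound the number of lattice points $(a_2,\dots,a_k)$ in this simplex by attaching to each such point the unit box $\prod_{j=2}^{k}[a_j, a_j+1)$. These boxes are pairwise disjoint and each has volume $1$, so their total volume equals the number of lattice points. For $(x_2,\dots,x_k)$ in the box attached to $(a_2,\dots,a_k)$ I have $x_j < a_j + 1$, hence
\begin{equation*}
b_2 x_2 + \cdots + b_k x_k < b_2 a_2 + \cdots + b_k a_k + (b_2 + \cdots + b_k) \leq m + b_2 + \cdots + b_k.
\end{equation*}
So the union of the boxes is contained in the enlarged simplex
\begin{equation*}
T := \{(x_2,\dots,x_k) \in \R_{\geq 0}^{k-1} \colon b_2 x_2 + \cdots + b_k x_k \leq M\}, \qquad M := m + b_2 + \cdots + b_k.
\end{equation*}

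Finally, a direct volume computation (by rescaling $y_j := b_j x_j/M$) gives $\meas(T) = \frac{M^{k-1}}{(k-1)!\, b_2 \cdots b_k}$, which yields the claimed bound. There is no real obstacle here; the only small points to keep tidy are the injectivity of the projection $(a_1,\dots,a_k) \mapsto (a_2,\dots,a_k)$ and the strict-versus-weak inequalities when covering by half-open boxes.
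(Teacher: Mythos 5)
Your proposal is correct and follows essentially the same route as the paper: project out $a_1$ to reduce to counting lattice points in the simplex $b_2a_2+\cdots+b_ka_k\leq m$, then bound that count by the volume of the enlarged simplex (the paper phrases the unit-box covering as an integral of a characteristic function) and compute the volume by the same rescaling. No issues.
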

\begin{proof}
Clearly,
\begin{align*}
S&\leq |\{(a_2,\dots,a_k)\in\Z_{\geq 0}^{k-1}\colon b_2a_2+\dots+b_ka_k\leq m\}|\\
&\leq \int_{0}^\infty\cdots\int_{0}^\infty \chi_{[0,1]}\Big(\frac{b_2x_2+\dots+b_kx_k}{m+b_2+\cdots+b_k}\Big)\,\d x_2\cdots \,\d x_k.
\end{align*}
By a change of variables this is
\begin{equation*}
\frac{(m+b_2+\cdots+b_k)^{k-1}}{b_2\cdots b_k} \int_{0}^\infty\cdots\int_{0}^\infty \chi_{[0,1]}(x_2+\cdots+x_k)\,\d x_2\cdots \,\d x_k
=\frac{(m+b_2+\cdots+b_k)^{k-1}}{(b_2\cdots b_k)(k-1)!},
\end{equation*}
as desired.
\end{proof}
\begin{lem}\label{lem:ml2}
For $k\geq3$, $\ell\geq 0$ we have
\begin{equation*}
Z(k,\ell)
:=    \hspace{-0.5em}
      \sum_{a_1,\dots,a_{k-1}\geq 0\atop  a_{1}+\cdots+2^{k-2}a_{k-1}=\ell}
         \prod_{2\leq h\leq k}2^{h\chi_{[0,2^hh)}(a_1+2a_{2}+\cdots+2^{h-2}a_{h-1})}
\leq  2^{-\frac{k^2}{2}+5k}\frac{\ell^{k-2}}{(k-2)!}+2^{\frac{k^2}{2}+4k}k!.
\end{equation*}
\end{lem}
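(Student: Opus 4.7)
The plan is to prove the inequality by strong induction on $k$, based on the recurrence
\begin{equation*}
Z(k,\ell) = 2^{k\,\chi_{[0,\,2^k k)}(\ell)} \sum_{a=0}^{\lfloor \ell/2^{k-2}\rfloor} Z(k-1,\ell-2^{k-2}a),
\end{equation*}
obtained by fixing $a_{k-1}=a$ and pulling out the outermost factor, which depends only on the constraint $s_k=\ell$. The base case $k=3$ will follow from a direct calculation: there $Z(3,\ell)$ is a small piecewise-linear function of $\ell$, easily bounded by the right-hand side $2^{21/2}\ell + 6\cdot 2^{33/2}$.

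For the inductive step, write $A_k := 2^{-k^2/2+5k}$ and $B_k := 2^{k^2/2+4k}k!$. Assuming $Z(k-1,m)\leq A_{k-1} m^{k-3}/(k-3)! + B_{k-1}$, I would plug into the recurrence and use the Riemann-sum bounds
\begin{equation*}
\sum_{a=0}^{\lfloor \ell/2^{k-2}\rfloor}(\ell-2^{k-2}a)^{k-3} \leq \ell^{k-3} + \frac{\ell^{k-2}}{(k-2)\,2^{k-2}}, \qquad
\Bigl\lfloor \frac{\ell}{2^{k-2}}\Bigr\rfloor + 1 \leq \frac{\ell}{2^{k-2}}+1,
\end{equation*}
to obtain
\begin{equation*}
Z(k,\ell) \leq 2^{k\chi_{[0,2^k k)}(\ell)}\Bigl[\frac{A_{k-1}\ell^{k-2}}{(k-2)!\,2^{k-2}} + \frac{A_{k-1}\ell^{k-3}}{(k-3)!} + B_{k-1}\Bigl(\frac{\ell}{2^{k-2}}+1\Bigr)\Bigr].
\end{equation*}
I would then split on $\ell \geq 2^k k$ versus $\ell < 2^k k$. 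In the first case the characteristic vanishes; the leading $\ell^{k-2}$-coefficient is $A_{k-1}/2^{k-2}$, the lower-order pieces absorb into it using $\ell\geq 2^kk$, and the ratio $A_k/(A_{k-1}/2^{k-2}) = 2^{11/2}$ of the target values provides ample slack. In the second case the weight $2^k$ is present but every $\ell^j$ is bounded by $(2^kk)^j$; the dominant term $2^{k+2}kB_{k-1} = 2^{k^2/2+4k-3/2}k!$ sits inside $B_k$ with a factor $2^{-3/2}$ to spare, while the $A_{k-1}$-driven constants are at most $2^{k^2/2+O(k)}$ via $k^{k-2}/(k-2)!\leq e^k$, and they too fit inside $B_k = 2^{k^2/2+4k}k!$ since $k!\geq 2^{(3/2)k}$ for $k\geq 3$.

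The main obstacle is the bookkeeping. The subtle point is the term $B_{k-1}\ell/2^{k-2}$ in the large-$\ell$ case: since it is not constant in $\ell$, it must be re-split by a secondary threshold at $\ell\sim 2^{k+O(1)}$ into a constant part (absorbed into $B_k$) and a genuinely $\ell$-dependent part (absorbed into $A_k\ell^{k-2}/(k-2)!$). What makes the whole scheme work is that the $5k$ and $4k$ of slack in the target exponents is precisely tuned to leave about $O(k)$ of room per inductive step, which is exactly what is needed to absorb the $\ell^{k-3}$ correction, the cross-term $B_{k-1}\ell/2^{k-2}$, and the $k!$-type combinatorial factor that appears when $\ell$ is small.
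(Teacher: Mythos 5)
Your plan is, in substance, the paper's own argument in slightly different packaging: the paper first replaces $Z$ by an auxiliary quantity $Z'$ (first variable dropped, equality constraint relaxed to an inequality, $\ell$ halved), but then does exactly what you propose — peel off the top variable, insert the inductive bound, compare the resulting sums with integrals, split according to whether the $h=k$ weight is active (your $\ell<2^kk$ versus $\ell\geq 2^kk$ is the paper's $Z'_<$ versus $Z'_\geq$), and absorb the linear-in-$\ell$ cross term via a secondary threshold (the paper uses $\ell\geq 2^{2k-1}k$). Your recurrence $Z(k,\ell)=2^{k\chi_{[0,2^kk)}(\ell)}\sum_{a}Z(k-1,\ell-2^{k-2}a)$ is correct, the Riemann-sum bounds are fine, and your small-$\ell$ computation $2^{k+2}kB_{k-1}=2^{k^2/2+4k-3/2}k!=2^{-3/2}B_k$ checks out.

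One quantitative point in your sketch would, however, fail if followed literally: the secondary threshold cannot sit at $\ell\sim 2^{k+O(1)}$. Absorbing $B_{k-1}\ell/2^{k-2}=2^{k^2/2+2k-3/2}(k-1)!\,\ell$ into a fixed fraction of $A_k\ell^{k-2}/(k-2)!$ requires $\ell^{k-3}\gg 2^{k^2-3k}(k-1)!\,(k-2)!$, and since $(k-1)!\,(k-2)!$ grows like $k^{2k}e^{-2k}$, no bound of the form $\ell\leq 2^{k+C}$ suffices for large $k$; the crossover is at $\ell$ of order $2^kk^2$, so the threshold must be taken in the window between roughly $2^kk^2$ and $2^{2k}k$ (with the paper's choice $2^{2k-1}k$ one gets $B_{k-1}\ell/2^{k-2}\leq 2^{-5/2}B_k$ below it and comfortable domination by the $A_k$-budget above it). Two smaller slips: the ratio $A_k\big/\bigl(A_{k-1}2^{-(k-2)}\bigr)$ equals $2^{7/2}$, not $2^{11/2}$ (still ample slack, since $2^{-7/2}+2^{-11/2}<1$), and the inequality $k!\geq 2^{3k/2}$ is false for $k=3,4,5$ — it only holds from $k=6$, so for small $k$ you must either check directly or lean on the spare factor $2^{-7/2}$ already present in those terms. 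With the threshold relocated and these constants repaired, your induction closes and coincides with the paper's proof.
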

\begin{proof}
First, we observe that $Z(k,\ell)\leq 4Z'(k,\ell/2)$, where
\begin{equation*}
Z'(k,\ell)
 = \sum_{a_2,\dots,a_{k-1}\geq 0\atop  a_{2}+\cdots+2^{k-3}a_{k-1}\leq\ell}
     \prod_{3\leq h\leq k}2^{h\chi_{[0,2^{h-1}h)}(a_{2}+\cdots+2^{h-3}a_{h-1})}.
\end{equation*}
We shall prove that for all $k\geq3,\ell\geq 0$ one has
\begin{equation}\label{eq:ind}
Z'(k,\ell)\leq 2^{-\frac{k^2}{2}+5k-2}  \frac{\ell^{k-2}}{(k-2)!}+ 2^{\frac{k^2}{2}+ 4k -2} k!,
\end{equation}
from which the claimed inequality for $Z$ follows immediately. We prove this by induction over $k$. The
result is obvious if $k=3$. Now assume that the claimed inequality holds for $k-1\geq3$. We split $Z'$ into
$Z'=Z'_<+Z'_\geq$, where $Z'_<$ is the contribution of the terms with
$a_{2}+\cdots+2^{k-3}a_{k-1}<2^{k-1}k$. We have
\begin{align*}
Z'_<(k,\ell)
&\leq 2^k\sum_{a_2,\dots,a_{k-1}\geq 0,\ a_{k-1}<4k\atop  a_{2}+\cdots+2^{k-4}a_{k-2}< 2^{k-3}(4k-a_{k-1})}
         \prod_{3\leq h\leq k-1}2^{h\chi_{[0,2^{h-1}h)}(a_{2}+\cdots+2^{h-3}a_{h-1})}\\
&=    2^k\sum_{0\leq a<4k}Z'(k-1,2^{k-3}(4k-a)).
\end{align*}
Thus, by the inductive hypothesis,
\begin{align}
Z'_<(k,\ell)
&\leq\sum_{0\leq a<4k}2^{\frac{k^2+2k+3}{2}}\frac{(4k-a)^{k-3}}{(k-3)!}+2^{\frac{k^2+6k-11}{2}}2^k4k(k-1)!\notag\\
&\leq \frac{2^{\frac{k^2+2k+3}{2}}}{(k-3)!}\Big[\int_{0}^{4k}(4k-x)^{k-3}\, \d x + (4k)^{k-3}\Big]
      +2^{\frac{k^2+8k-7}{2}}k!                                                                           \notag\\
&=    2^{\frac{k^2+6k-9}{2}}\frac{k^{k-3}}{(k-3)!}\Big[\frac{4k}{k-2}+1\Big]+2^{\frac{k^2+8k-7}{2}}k!     \notag\\
&\leq 2^{\frac{k^2+9k-9}{2}}+2^{\frac{k^2+8k-7}{2}}k!,                                                    \label{eq:inda}
\end{align}
since $\frac{k^{k-3}}{(k-3)!}\big[\frac{4k}{k-2}+1\big]\leq 2^{3k/2}$ for all $k\geq 4$. In order to
bound $Z'_\geq$, we first observe we can assume that $\ell\geq 2^{k-1}k$, since otherwise $Z'_\geq$ is just
the empty sum. Now
\begin{align*}
Z'_\geq(k,\ell)
&\leq \sum_{0\leq a_{k-1}\leq \ell/ 2^{k-3}}\sum_{a_2,\dots,a_{k-2}\geq 0\atop  a_{2}+\cdots+2^{k-4}a_{k-2}\leq\ell-2^{k-3}a_{k-1}}
      \prod_{3\leq h\leq k-1}2^{h\chi_{[0,2^{h-1}h)}(a_{2}+\cdots+2^{h-3}a_{h-1})}\\
&\leq \sum_{0\leq a_{k-1}\leq \ell/ 2^{k-3}}Z'(k-1,\ell-2^{k-3}a_{k-1}) .
\end{align*}
Thus, by the inductive hypothesis we have
\begin{align}
Z'_\geq(k,\ell)
&\leq \sum_{0\leq a\leq \ell/ 2^{k-3}}\frac{(\ell-2^{k-3}a)^{k-3}}{(k-3)!}2^{-\frac{k^2-12k +15}{2}}
     +2^{\frac{k^2+6k-11}{2}}(\ell/ 2^{k-3}+1)(k-1)!                            \notag\\
&\leq \frac{2^{-\frac{k^2-12k +15}2}}{(k-3)!}
     \bigg(\int_0^{\ell/ 2^{k-3}}(\ell-2^{k-3}x)^{k-3}\,\d x + \ell^{k-3}\bigg)
    +\big(2^{\frac{k^2+4k-5}{2}}\ell +2^{\frac{k^2+6k-11}{2}}\big)(k-1)!        \notag\\
&=2^{-\frac{k^2-10k +15}2}  \frac{\ell^{k-2}}{(k-2)!}\Big[8+\frac{2^{k}(k-2)}\ell\Big]
     +\big(2^{\frac{k^2+4k-5}{2}}\ell +2^{\frac{k^2+6k-11}{2}}\big)(k-1)!       \notag\\
&\leq 10\cdot 2^{-\frac{k^2-10k +15}2}  \frac{\ell^{k-2}}{(k-2)!}
     +\big(2^{\frac{k^2+4k-5}{2}}\ell +2^{\frac{k^2+6k-11}{2}}\big)(k-1)!,      \label{eq:bfz}
\end{align}
since $\ell\geq 2^{k-1}k$. Now if $\ell\geq 2^{2k-1}k$ then
\begin{align*}
2^{\frac{k^2+4k-5}{2}}\ell (k-1)!
\leq 2^{\frac{k^2+4k-5}{2}}\ell^{k-2}\frac{2^{-2k(k-3)}k! }{k^{k-3}}
\leq 6 \ell^{k-2}2^{-\frac{k^2-10k+5}2}  2^{-k^2+3k}
\leq 2^{-\frac{k^2-10k+5}2} \frac{\ell^{k-2}}{(k-2)!}
\end{align*}
since $k!\leq 6k^{k-3}$ and $6(k-2)! \leq  2^{k^2-3k}$ for $k\geq4$. Thus, in any case
\begin{align*}
2^{\frac{k^2+4k-5}{2}}\ell (k-1)!
\leq  2^{-\frac{k^2-10k+5}2} \frac{\ell^{k-2}}{(k-2)!}+2^{\frac{k^2+8k-7}{2}}k!
\end{align*}
and so~\eqref{eq:ind} follows from~\eqref{eq:inda} and~\eqref{eq:bfz} since $2^{\frac{k^2+9k-9}{2}}
+2^{\frac{k^2+6k-11}{2}}(k-1)! \leq 2^{\frac{k^2+8k-8}{2}}k!$.
\end{proof}
The following two propositions give the crucial steps in proving Theorem~\ref{thm:asymp}. The first one
essentially shows that, for almost all $\tau$, the size of $n_k$ is about $2^k k$ for infinitely many $k$. The
second one shows that for almost all $\tau$ there are no $n\asymp 2^k$ with $|\tau - \sigma_{n-1}(\tau)|$
much smaller than $ g_k(n)$.
\begin{prop}\label{prop:upper}
Let $f:\R_{>0}\to\R_{>0}$ be a function such that $f(x)\to\infty$ as $x\to\infty$ and let
\begin{equation*}
\X_f :=\bigg\{\tau\in\R\colon
         \begin{array}{l}
           \textup{there are at most finitely many $k$ such that }\\[.4\baselineskip]
          |\tau - \sigma_{n-1}(\tau)| < g_k(n)\ \textup{for some } n\in[2^k k, f(k) 2^k k]
         \end{array}
       \bigg\}.
\end{equation*}
Then $\meas(\X_f)=0$.
\end{prop}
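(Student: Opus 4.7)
The plan is to show $\meas\bigl(\bigcap_{k \geq K} B_k\bigr) = 0$ for each fixed $K \in \N$, where $B_k := A_k^c$ and $A_k := \{\tau \in \R : \exists\,n \in [2^k k, f(k) 2^k k] \text{ with } |\tau - \sigma_{n-1}(\tau)| < g_k(n)\}$. Since $\X_f = \bigcup_K \bigcap_{k \geq K} B_k$, countable subadditivity then gives $\meas(\X_f) = 0$. Fix $K$, take $M \geq K$, and aim to show $\meas\bigl(\bigcap_{k=K}^M B_k \cap [-R, R]\bigr) \to 0$ as $M \to \infty$.

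For $\tau \in \bigcap_{k=K}^M B_k \setminus Y_\infty$, since $n_{k+1}(\tau) \geq 2^{k+1}(k+1) > 2^k k$ satisfies $|\tau - \sigma_{n_{k+1} - 1}| < g_k(n_{k+1})$, $B_k$ forces $n_{k+1}(\tau) > f(k) 2^k k$ for each $k \in [K, M]$. By Lemma~\ref{lem:imp_thm2}, the signs $(s_n)_{n_k \leq n < n_{k+1}}$ form $r_k := (n_{k+1} - n_k)/2^k$ concatenated blocks $\eta_{i,k} \cdot B_k$; when $n_k \geq 2^{k+1}(k+1)$ all $\eta_{i,k} = s_{n_{k+1}}$, and otherwise $\eta_{i,k}$ is forced for $i \geq k+2$ and free (two options) only for $i \in [0, k+2)$. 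Each complete sign pattern determines a compatible interval of length at most $2 g_M(n_{M+1}) \leq 2 c_M / n_{M+1}^{M+1}$ by the defining inequality of $n_{M+1}$ and Lemma~\ref{lem:thm_to_thm}.

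Since compatible intervals for distinct sign patterns are disjoint, summing lengths gives
\begin{equation*}
\meas\bigl(\bigcap_{k=K}^M B_k \cap [-R, R]\bigr) \leq 2 \sum_{\text{valid patterns}} \frac{c_M}{n_{M+1}^{M+1}},
\end{equation*}
where a valid pattern is determined by the initial signs $(s_1, \ldots, s_{n_K - 1})$, a tuple $(r_K, \ldots, r_M) \in \Z_{\geq 0}^{M-K+1}$ with $n_{k+1} = n_K + \sum_{j=K}^k r_j 2^j > f(k) 2^k k$, the signs $s_{n_{k+1}} \in \{\pm 1\}$, and the free $\eta_{i,k}$'s. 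Grouping patterns by $n_{M+1}$, I would use Lemma~\ref{lem:hyperpyramid} to count tuples $(r_K, \ldots, r_M)$ on the hyperplane $\sum_j r_j 2^j = n_{M+1} - n_K$, and Lemma~\ref{lem:ml2} (whose product $\prod_h 2^{h \chi_{[0, 2^h h)}}$ is tailored to absorb the free-$\eta$ combinatorial factor) to handle the sign freedom. The constraint $n_{M+1} > f(M) 2^M M$ then provides an extra factor $f(M)^{-K} \to 0$ via $\sum_{n > f(M) 2^M M} n^{-K-1} \lesssim (f(M) 2^M M)^{-K}/K$.

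The main obstacle is this careful bookkeeping: balancing the factorial growth $c_M = 2^{\binom{M}{2}} M!$ against the combinatorial counts from the two lemmas so that, after cancellations, the resulting bound takes the form $2^{-MK + O(K^2)} \cdot \mathrm{poly}(M) \cdot f(M)^{-K}$, which vanishes as $M \to \infty$. Once this is established, $\meas\bigl(\bigcap_{k \geq K} B_k \cap [-R, R]\bigr) = 0$; letting $R \to \infty$ and taking a countable union over $K$ yields $\meas(\X_f) = 0$.
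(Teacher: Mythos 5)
Your overall strategy is the same as the paper's (restrict to $\tau\notin Y_\infty\cup\Q$ in a bounded window, use Lemma~\ref{lem:imp_thm2} to rigidify the sign pattern between consecutive $n_h$'s, cover the admissible $\tau$ by intervals of length $2g_M(n_{M+1})$ around the possible values of $\sigma_{n_{M+1}-1}$, count patterns with the lattice-point Lemma~\ref{lem:hyperpyramid}, and extract the decisive decay from $n_{M+1}>f(M)2^MM$); your tail decomposition $\X_f=\bigcup_K\bigcap_{k\geq K}B_k$ is a perfectly good substitute for the paper's shrinkage of the interval to $[2^kk,f(k)2^kk/m]$. However, there is a genuine gap in the counting: you treat ``the initial signs $(s_1,\dots,s_{n_K-1})$'' as one of the data of a valid pattern but never bound how many such initial segments can occur. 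Nothing in your argument controls $n_K$ (indeed $n_K$ can be comparable to $\ell=n_{M+1}$), and the trivial bound $2^{n_K-1}$ on the number of initial patterns destroys the available decay $c_M\ell^{-(K+1)}$ coming from the tuple count. The paper closes exactly this hole by running the block structure of Lemma~\ref{lem:imp_thm2} for \emph{all} $h\geq 1$, not just $h\in[K,M]$: on $[-q,q]$ one has $n_1\leq 4e^{q}$, so the segment below $n_1$ contributes at most $2^{4e^q}$ patterns, each $h$ below a threshold $C$ contributes at most $2^{h+2}$ free signs, and every $h$ beyond $C$ contributes none; the whole pattern is then parametrized by a single tuple $(a_0,\dots,a_M)$ with $a_0+2a_1+\cdots+2^Ma_M=\ell$, $a_0\leq 4e^q$, to which Lemma~\ref{lem:hyperpyramid} applies. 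You need this step (or an equivalent control of the segment below $n_K$) for the sum over patterns to converge.

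A second, smaller point: the final balancing, which you acknowledge as ``the main obstacle,'' is only asserted, and your plan to invoke Lemma~\ref{lem:ml2} to absorb the free-$\eta$ factor suggests it was not actually carried out. In this proposition Lemma~\ref{lem:ml2} is not needed (it is the tool for Proposition~\ref{prop:lower}, where the relevant $n$ are of size $\asymp 2^kk$ and the sign freedom cannot be bounded uniformly): here the conditions $\tau\in B_{k}$ force $n_{h}>f(h-1)2^{h-1}(h-1)\geq 2^{h+1}(h+1)$ for all sufficiently large $h$, so by Lemma~\ref{lem:imp_thm2} the free-sign factor is a constant depending only on $K$, $f$ and the window $[-R,R]$, and Lemma~\ref{lem:hyperpyramid} alone gives a bound of the shape $\ll_{K,f,R} M2^{-M}\sum_{\ell>f(M)2^MM}\ell^{-2}\to 0$. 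Also, your claim that the compatible intervals for distinct sign patterns are disjoint is not needed (and not true in general); a union bound suffices. With the initial-segment count repaired and the estimate actually performed, your argument becomes essentially the paper's proof.
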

\begin{proof}
Let $\X_f':=\X_f\cap \R'$ where $\R':=\R\setminus  (Y_{\infty}\cup\Q)$. By Proposition~\ref{prop:X} we
have that $\meas(\X_f)=\meas(\X_f')$. Then we observe that
\begin{align*}
\X_f'
&\subseteq\bigcup_{m\in\N}\Big\{\tau\in\R'\colon
                                |\tau - \sigma_{n-1}(\tau)| > g_k(n)
                                \text{ for all }
                                k\in\N
                                \text{ and for all }
                                n \in [2^k k, f(k) 2^k k / m]
                          \Big\}\\
&=\bigcup_{m\in\N}\bigcup_{q\in\N}\bigcap_{k\in\N}\X_{m,q,k},
\end{align*}
where
\begin{equation*}
\X_{m,q,k}:=\bigg\{\tau\in\R'\cap[-q,q]\colon
                   \begin{array}{l}
                     |\tau - \sigma_{n-1}(\tau)| > g_h(n)\text{ for all } h\in [1,k]\\
                    \text{and for all } n \in [2^h h, {f(h) 2^h h}/{m}]
                   \end{array}
            \bigg\}.
\end{equation*}
In particular, in order to prove that $\meas(\X_f)=0$ it is sufficient to show that $\meas(\X_{m,q,k})\to
0$ as $k\to\infty$.

Let $\tau\in\X_{m,q,k}$ and let $n_1,n_2,\dots$ as in Lemma~\ref{lem:imp_thm2}; in particular, $n_1\leq 4
e^{q}$. Also, let $C$ be such that $f(x)\geq 4m$ for $x\geq C$ so that if $h\geq C$ then, by the
definition of $\X_{m,q,k}$, $n_{h}\geq 2^{h+1}(h+1)$.

We split $\X_{m,q,k}$ depending on the value of $n_k$:
\begin{equation*}
\X_{m,q,k} =\bigcup_{\ell \geq f(k)2^{k} k/m}V_{\ell,m,q, k},
\qquad
V_{\ell,m, q,k}:=\{\tau \in \X_{m,q,k} \colon n_k=\ell \}.
\end{equation*}
If $\tau\in V_{\ell,m,q, k}$, then, by definition, $|\tau-\sigma_{\ell-1}(\tau)|<g_{k-1}(\ell)$ and so
$\tau$ has distance less than $g_{k-1}(\ell)$ from one of the elements of the set
\begin{equation*}
R_{\ell,m,q, k}:=\{\sigma_{\ell-1}(\tau) \colon  \tau\in V_{\ell,m, q,k}\}.
\end{equation*}
This set has cardinality bounded by the number of possible choices of signs $(s_n)_{n<\ell}$. By
Lemma~\ref{lem:imp_thm2}, if $h\geq C$ then the signs in $(s_n)_{n_h\leq n< n_{h+1}}$ are completely
determined (depending on the sign of $s_{n_{h+1}}$ and thus eventually on $s_{n_k}=s_\ell$), whereas for
each $h<C$ the first $h+2\leq C+1$ signs in $(s_n)_{n_h\leq n< n_{h+1}}$ are free and the other ones are
determined. Also, there are at most $2^{n_1}\leq 2^{4 e^q}$ possibilities for $(s_n)_{n<n_1}$ and $2$ for
$s_{\ell}$. Finally, we recall that $n_1\leq n_2\leq \cdots\leq n_k=\ell$ and that $n_{h+1}\equiv
n_h\pmod{2^{h}}$ for all $h\geq 1$. We then obtain that $|R_{\ell,m, q,k}|$ is bounded by
$2^{C(C+1)+4e^q+1}$ times
\begin{align*}
&|\{(m_1,\dots, m_{k-1})\colon m_1\leq \dots \leq m_k:=\ell ,\ m_1\leq 4 e^q,\  m_{h+1}\equiv m_h\pmod {2^{h}}\ \text{for } 1\leq h<k\}|\\
&\qquad
 =     |\{(a_0,\dots, a_{k-1})\in\Z_{\geq 0}\colon a_0+ 2a_1+2^2a_{2}+\cdots+2^{k-1}a_{k-1}=\ell,\ a_0\leq 4e^q\}|\\
&\qquad
 \leq  4e^q \frac{(\ell+2^2+\cdots+2^{k-1})^{k-2}}{(2^2\cdots 2^{k-1})(k-2)!}
 \leq  4e^q  2 \frac{(\ell+2^k)^{k-2}}{2^{\binom{k}{2}}(k-2)!} \ll e^q  \frac{\ell^{k-2}}{2^{\binom{k}{2}}(k-2)!},
\end{align*}
by Lemma~\ref{lem:hyperpyramid}, where in the last inequality we used that $\ell=n_k\geq 2^kk$. It follows that
\begin{align*}
\meas(\X_{m,q, k})
&\leq     2\sum_{\ell \geq f(k)2^{k} k/m}|R_{\ell,m,q, k}|g_{k-1}(\ell)
 \ll_{q,m} \sum_{\ell \geq f(k)2^{k} k/m}\frac{\ell^{k-2}}{2^{\binom{k}{2}}(k-2)!}\cdot\frac{2^{\binom{k-1}{2}}(k-1)!}{\ell^k} \\
&=         \sum_{\ell \geq f(k)2^{k} k/m}\frac{k-1}{2^{k-1} \ell^2}\ll_{q,m}  \frac{1}{2^{2k}f(k)},
\end{align*}
by Lemma~\ref{lem:g_signs} (and for $k\geq C$). Thus, $\meas(\X_{m,q,k})\to 0$ as $k\to\infty$, as desired.
\end{proof}
\begin{prop}\label{prop:lower}
Let $f\colon\R_{\geq 0}\to\R_{>0}$ be a function such that $f(x)\gg 2^{5x}x^x$ for $x$ large enough and
let
\begin{equation*}
\Y_f:=\bigg\{\tau\in\R\colon
      \begin{array}{l}
       \textup{there exist infinitely many $k$ such that}\\[.4\baselineskip]
       |\tau - \sigma_{n-1}(\tau)| <{g_k(n)}/{f(k)}
       \textup{ for some } n \in [2^{k-1} (k-1), 2^k k)
      \end{array}
      \bigg\}.
\end{equation*}
Then $\meas(\Y_f)=0$.
\end{prop}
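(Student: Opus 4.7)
The plan is to apply the Borel--Cantelli lemma. Set
$$\Y_{f,k} := \bigl\{\tau \in \R : |\tau - \sigma_{n-1}(\tau)| < g_k(n)/f(k) \text{ for some } n \in [2^{k-1}(k-1), 2^k k)\bigr\},$$
so that $\Y_f$ is the set of $\tau$ lying in infinitely many $\Y_{f,k}$. By Borel--Cantelli applied inside $[-q,q]$, it then suffices to prove $\sum_k \meas(\Y_{f,k} \cap [-q, q]) < \infty$ for every $q > 0$. Restricting to $\R' := \R \setminus (Y_\infty \cup \Q)$ (negligible by Proposition~\ref{prop:X}), the sequence $(n_h)_h$ from Lemma~\ref{lem:imp_thm2} is defined. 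The first observation is that for $\tau \in \Y_{f,k} \cap [-q, q] \cap \R'$ with a witnessing $n$, one has $n_{k-1} \leq n < n_k$: indeed, the inequality $|\tau - \sigma_{n-1}(\tau)| < g_k(n)/f(k) \leq g_{k-2}(n)$ (via Corollary~\ref{cor:A2n}) combined with $n \geq 2^{k-1}(k-1)$ forces $n_{k-1} \leq n$, while $n_k \geq 2^k k > n$ by definition.

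For each fixed $n$ in the admissible range, I would bound the number $N(n,k)$ of distinct initial sign patterns $(s_1(\tau), \ldots, s_{n-1}(\tau))$ that can arise for such $\tau$; any such pattern then confines $\tau$ to an interval of length at most $2 g_k(n)/f(k)$. By Lemma~\ref{lem:imp_thm2} the pattern is determined by the data $n_1 \leq \cdots \leq n_{k-1} \leq n$ (with $n_1 \leq 4 e^q$, $n_h \geq 2^h h$, and $n_{h+1} \equiv n_h \pmod{2^h}$), together with at most $2^{h+2}$ choices for the free initial $\eta_{i,h}$'s in the $h$-th group (active only when $n_h < 2^{h+1}(h+1)$), the $2$ choices for each $s_{n_{h+1}}$, and a further $2^{O(k)}$ factor coming from the partial $(k-1)$-st block between $n_{k-1}$ and $n$. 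Parametrising by $a_0 = n_1$ and $a_h = (n_{h+1} - n_h)/2^h$ for $h \geq 1$, the weighted count is exactly of the type bounded by Lemma~\ref{lem:ml2}, yielding
$$N(n, k) \ll_q 2^{O(k)} \Bigl(2^{-k^2/2 + O(k)} \frac{n^{k-2}}{(k-2)!} + 2^{k^2/2 + O(k)} k!\Bigr).$$

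Finally, using $g_k(n) \leq c_k/n^{k+1} = 2^{\binom{k}{2}} k!/n^{k+1}$ from Lemma~\ref{lem:g_signs} and summing $N(n,k)\cdot 2g_k(n)/f(k)$ over $n \in [2^{k-1}(k-1), 2^k k)$, the first term of $N(n,k)$ contributes $\ll_q 2^{O(k) - 2k}/f(k)$ (after $\sum_{n \geq 2^{k-1}(k-1)} n^{-3} \ll 2^{-2k} k^{-2}$) and the second $\ll_q 2^{O(k)} k^k / f(k)$ (the dominant one), so $\meas(\Y_{f,k} \cap [-q, q]) \ll_q 2^{O(k)} k^k / f(k)$, which is summable in $k$ under the hypothesis $f(k) \gg 2^{5k} k^k$. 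The main obstacle is the combinatorial count of the previous paragraph: unlike in the proof of Proposition~\ref{prop:upper}, here we cannot force $n_h \geq 2^{h+1}(h+1)$ for $h$ above some threshold, so every group can genuinely carry its full sign freedom, and Lemma~\ref{lem:ml2} is precisely what prevents the resulting product from swamping the geometric decay of $g_k(n)$.
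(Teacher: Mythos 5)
Your skeleton is the same as the paper's: restrict to $\R\setminus(Y_\infty\cup\Q)$, locate the witness via $n_{k-1}\leq n<n_k$, count the possible sign patterns $(s_1,\dots,s_{n-1})$ through the milestone data of Lemma~\ref{lem:imp_thm2}, bound the count with Lemma~\ref{lem:ml2}, multiply by the interval length $2g_k(n)/f(k)$ and sum (your Borel--Cantelli phrasing is just the paper's tail-measure estimate). However, there are two concrete gaps. First, your application of Lemma~\ref{lem:ml2} is not literally available: with the parametrization $a_0=n_1$, $a_h=(n_{h+1}-n_h)/2^h$ the weighted sum equals $n_{k-1}$, which is \emph{not} fixed by $n$; to reduce to a fixed-sum count you must either invoke the congruence $n\equiv n_{k-1}\pmod{2^{k-1}}$ (which the paper extracts from Lemma~\ref{lem:imp_thm2} and which you never state) and then sum over the $O(k)$ admissible values of $n_{k-1}$, or add a $k$-th variable $(n-n_{k-1})/2^{k-1}$ and apply the lemma with $k+1$ in place of $k$, as the paper does; without one of these your bound $n^{k-2}/(k-2)!$, and more importantly the constant-in-$n$ term, is off by a factor of order $2^k$.

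Second, and this is the decisive point, the finish ``$\meas(\Y_{f,k}\cap[-q,q])\ll_q 2^{O(k)}k^k/f(k)$, which is summable under $f(k)\gg 2^{5k}k^k$'' is a non sequitur: with an unspecified $2^{O(k)}$ the series need not converge, since the hypothesis carries the hard exponent $5$. The sharp bookkeeping (as in the paper) yields roughly $2^{9k/2}k^k/f(k)$, i.e.\ a margin of only $2^{k/2}$ against $2^{5k}k^k$. Your accounting, taken at face value, spends more than that margin: the ``$2$ choices for each $s_{n_{h+1}}$'' are redundant (those signs are part of the next group's data) but cost an extra $2^{k-1}$ if counted, and a generous $2^{O(k)}$ for the final partial $(k-1)$-group plus the unfixed $n_{k-1}$ issue above can each cost another $2^{k}$; any one of these pushes the linear exponent past $5$ and the final sum diverges. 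So the proof is not complete as written: you must prune the redundant choices (the cascading of the $\eta_{i,h}$ onto a single terminal sign), use the congruence constraint on $n-n_{k-1}$, and track the powers of $2^k$ explicitly down to an exponent at most $9/2$, exactly as the paper does.
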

\begin{proof}
Let $\Y_f':=\Y_f\cap \R'$, with $\R':=\R\setminus (Y_{\infty}\cup\Q)$, so that
$\meas(\Y_f')=\meas(\Y_f)$. We have
\begin{equation*}
\Y_f' =
\bigcup_{q\in\N}\bigcap_{K\in\N}\bigcup_{k\geq K}\bigcup_{2^{k-1} (k-1) \leq \ell< 2^kk}\Y_{q,k,\ell},
\end{equation*}
where
\begin{equation*}
\Y_{q,k,\ell}:=\{\tau\in\R'\cap[-q,q]\colon |\tau - \sigma_{\ell-1}(\tau)| < g_k(\ell)/f(k)\}.
\end{equation*}
It suffices to show that
\begin{equation*}
\meas\bigg(\bigcup_{k\geq K}\bigcup_{2^{k-1} (k-1)  \leq \ell < 2^kk }\Y_{q,k,\ell}\bigg)\to0
\end{equation*}
as $K\to\infty$. Also, we can assume $f(x)\geq 1$ for $x\geq K$.

Let $\tau\in \Y_{q,k,\ell}$ and let $n_1,n_2,\dots$ be as in Lemma~\ref{lem:imp_thm2}. By
Corollary~\ref{cor:A2n} we have $n_{k-1}\leq \ell< n_{k}$. Notice also that Lemma~\ref{lem:imp_thm2} implies
that $\ell\equiv n_{k-1}\pmod{2^{k-1}}$.

The number $\tau$ belongs to $\Y_{q,k,\ell}$, so it has distance less than $\frac{g_{k}(\ell)}{f(k)}$ from one
of the elements of the set
\begin{equation*}
R_{q,k,\ell}:=\{\sigma_{\ell-1}(\tau) \colon  \tau\in \Y_{q,k,\ell}\}.
\end{equation*}
This set has cardinality bounded by the number of possible choices of signs $(s_n)_{n<\ell}$. By
Lemma~\ref{lem:imp_thm2}, for each $h\in[1,k-1]$ we have only one choice for the signs in $(s_n)_{n_h\leq
n< n_{h+1}}$ if $n_h\geq 2^{h+1}(h+1)$ and at most $2^{h+2}$ in any case. Also, we have at most $2^{4e^q}
$ for the signs in $(s_n)_{n< n_{1}}$. Thus, writing $\ell=n_{k-1}+2^{k-1}a_{k-1}$,
$n_h=n_{h-1}+2^{h-1}a_{h-1}$ for $2\leq h\leq k-1$ and $n_1=a_0$ for some $a_{0},\dots
a_{k}\in\Z_{\geq 0}$, we see that
\begin{align*}
|R_{q,k,\ell}|
&\ll_q \sum_{a_0,a_1,\dots,a_{k-1}\geq 0\atop a_0+ 2a_1+2^2a_{2}+\cdots+2^{k-1}a_{k-1}=\ell}
       \prod_{2\leq h<k}2^{(h+2)\chi_{[0,2^{h+1}(h+1))}(a_0+ 2a_1+2^2a_{2}+\cdots+2^{h-1}a_{h-1})}\\
&\ll_q 2^{k} \sum_{b_1,\dots,b_{k}\geq 0\atop b_1+ 2b_2+\cdots+2^{k-1}b_{k}=\ell}
       \prod_{3\leq h\leq k}2^{h\chi_{[0,2^{h}h)}(b_1+ 2b_2+\cdots+2^{h-2}b_{h-1})}.
\end{align*}
Applying Lemma~\ref{lem:ml2} we then obtain
\begin{align*}
|R_{q,k,\ell}|
&\ll_{q} 2^{-\frac{k^2}{2}+5k}\frac{\ell^{k-1}}{(k-1)!} + 2^{\frac{k^2}{2}+ 5k}(k+1)!
&\leq    2^{\frac{k^2}{2}+4k} \frac{k^k}{k!}+2^{\frac{k^2}{2}+6k}(k+1)!
\ll      2^{\frac{k^2}{2}+5k} k^{k+1}
\end{align*}
for $\ell\leq 2^kk$ and $k\geq3$ since $k!\ll k^k 2^{-k}$.
In particular, for $K\geq3 $ we have
\begin{align*}
\meas\bigg(\bigcup_{k\geq K}\bigcup_{2^{k-1} (k-1)  \leq \ell < 2^kk }\Y_{q,k,\ell}\bigg)
&\leq  \sum_{k\geq K}\sum_{2^{k-1} (k-1) \leq \ell< k2^k}|R_{q,k,\ell}|\frac{g_{k}(\ell)}{f(k)}\\
&\ll_q \sum_{k\geq K}\sum_{2^{k-1} (k-1) \leq \ell< k2^k} 2^{\frac{k^2}{2}+5k}k^{k+1} \frac{2^{\binom{k}2}k!}{f(k)\ell^{k+1}}\\
&\ll   \sum_{k\geq K} 2^{11k/2}\frac{k! \, k^{k}}{(k-1)^{k}f(k)}
\ll \sum_{k\geq K} \frac{2^{9k/2}k^k}{f(k)},
\end{align*}
by Lemma~\ref{lem:g_signs}. The sum goes to zero if $f(x)\gg 2^{5x}x^x$ and so the proof is complete.
\end{proof}
The following lemmas put a lower and an upper limit to how small $|\tau - \sigma_{m-1}(\tau)| $ can be
when $\tau$ is in $\R\setminus\Y_f$ and $\R\setminus\X_f$ respectively.
\begin{lem}\label{lem:lb}
Let $f\colon\R_{\geq 0}\to\R_{>0}$ be an increasing function with $ \log(f(x))\leq x\log x+O(x) $ as
$x\to\infty$. Then for all $\tau\notin\Y_f$ we have
\begin{equation*}
|\tau - \sigma_{m-1}(\tau)| \geq e^{-\frac{(\log m)^2}{\log 4}+O_\tau(\log m)}
\end{equation*}
as $m\to\infty$.
\end{lem}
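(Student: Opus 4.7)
The plan is to invoke the contrapositive of the definition of $\Y_f$ and combine it with the sharp lower bound for $g_k$ given by Lemma~\ref{lem:g_signs}. Since $\tau\notin \Y_f$, there exists $K_0 = K_0(\tau,f)\in\N$ such that for every $k\geq K_0$ and every $n\in[2^{k-1}(k-1), 2^k k)$ one has $|\tau-\sigma_{n-1}(\tau)|\geq g_k(n)/f(k)$. The intervals $\{[2^{k-1}(k-1),2^k k)\}_{k\geq 1}$ tile the non-negative integers, so for each $m$ large enough there is a unique $k=k(m)\geq K_0$ with $m$ in this range, yielding $|\tau-\sigma_{m-1}(\tau)|\geq g_k(m)/f(k)$.

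The remaining task is to estimate $\log(g_k(m)/f(k))$. By Lemma~\ref{lem:g_signs}, $g_k(m)>c_k/(m+2^k)^{k+1}$, where $c_k=2^{\binom{k}{2}}k!$, so Stirling's formula gives
\begin{equation*}
\log g_k(m) \geq \tfrac{k(k-1)}{2}\log 2 + k\log k - k - (k+1)\log(m+2^k) + O(\log k).
\end{equation*}
The hypothesis on $f$ reads $-\log f(k)\geq -k\log k - O(k)$, and adding this to the previous inequality makes the $k\log k$ contributions cancel exactly. From $m\in[2^{k-1}(k-1),2^k k)$ one deduces $\log(m+2^k)=\log m+O(1)$ and, after inversion, $k=\log m/\log 2+O(\log\log m)$. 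Substituting, the leading $(\log m)^2$-terms consolidate as $\tfrac{k^2\log 2}{2}-(k+1)\log m = -(\log m)^2/\log 4 + O(\log m)$, after the pairwise cancellation of the $\log m\log\log m$ contributions that arise on both sides.

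The main obstacle is the careful bookkeeping of these secondary terms: when expanded in $\log m$, the quantities $\tfrac{k^2\log 2}{2}$, $k\log m$, $k\log k$ and $\log f(k)$ each produce a $\log m\log\log m$ contribution, and one has to check that they cancel pairwise to leave a clean $O_\tau(\log m)$ remainder. The cancellation between $\log c_k$ and $\log f(k)$ is what dictates the precise growth $\log f(x)\leq x\log x+O(x)$ that is admissible in the hypothesis; the remaining cancellation among $k^2$-terms and $k\log m$-terms is just a two-step expansion of $k$ as a function of $\log m$. Exponentiating the resulting estimate $\log(g_k(m)/f(k))\geq -(\log m)^2/\log 4+O_\tau(\log m)$ gives the claim.
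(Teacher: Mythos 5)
Your proof is correct and follows essentially the same route as the paper: negate the definition of $\Y_f$, locate $m$ in the range $[2^{k-1}(k-1),2^kk)$ to get $|\tau-\sigma_{m-1}(\tau)|\geq g_k(m)/f(k)$, bound $g_k(m)$ below via Lemma~\ref{lem:g_signs}, and use $k=\log_2 m+O(\log\log m)$ together with the growth hypothesis on $f$ to obtain the exponent $-(\log m)^2/\log 4+O_\tau(\log m)$. The only cosmetic difference is that you absorb the finitely many exceptional $k$ into a threshold $K_0(\tau)$ where the paper inserts a constant $\delta>0$; both devices are harmless since the conclusion is asymptotic in $m$.
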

\begin{proof}
If $\tau\notin\Y_f$ then there exists a sufficiently small $\delta>0$ such that $|\tau -
\sigma_{n-1}(\tau)| >\delta \frac{g_k(n)}{f(k)}$ for all $k\geq 1$ and all $n\in[2^{k-1} (k-1),2^k)$. Now
take any $m\geq2$ and let $k\in\N$ be such that $m\in[2^{k} k,2^{k+1}(k+1))$. Clearly such a $k$ exists
and satisfies $k+\log_2k\leq \log_2 m\leq  (k+1)+\log_2(k+1)$; in particular it follows that
\begin{equation*}
k=\log_2m-\log_2\log_2m+O(1)=\frac{\log m-\log\log m}{\log 2}+O(1)
\end{equation*}
as $m\to \infty$. The result then follows, since by Lemma~\ref{lem:g_signs} we have
\begin{equation*}
|\tau - \sigma_{m-1}(\tau)|
> \delta \frac{g_k(m)}{f(k)}>\frac{\delta\, k!\,2^{\binom{k}2}}{f(k)(2^{k+1}(k+1))^{k+1}}
= \frac{e^{-\frac{\log 2}2k^2+O_\tau(k)}}{f(k)}
> e^{-\frac{(\log m)^2}{\log 4} +O_\tau(\log m)}.
\end{equation*}
\end{proof}
\begin{lem}\label{lem:ub}
Let $f\colon \R_{>0}\to\R_{>0}$ be an increasing function with $f(x)\geq 3$ for $x$ large enough. Then for all
$\tau\notin\X_f$ there exist arbitrary large $n$ such that
\begin{equation*}
|\tau - \sigma_{m-1}(\tau)| \leq e^{-\frac{(\log m)^2-2\log m\log\log m}{\log 4}+O_\tau(\log m \log(f(\log n)))}
\end{equation*}
as $m\to\infty$.
\end{lem}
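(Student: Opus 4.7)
The plan is to use the hypothesis $\tau\notin\X_f$ to produce an explicit sequence $(n_k)$ along which $|\tau-\sigma_{n_k-1}(\tau)|$ is controlled by $g_k(n_k)$, and then to express the resulting bound purely in terms of $\log m$ (with $m=n_k$) via Stirling plus a clean algebraic cancellation.

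By definition of $\X_f$, for infinitely many $k$ there exists $n_k\in[2^k k,\,f(k)\,2^k k]$ with $|\tau-\sigma_{n_k-1}(\tau)|<g_k(n_k)$. Setting $m:=n_k$, Lemma~\ref{lem:g_signs} gives $g_k(m)<c_k/m^{k+1}$, and applying Stirling's formula to $c_k=k!\,2^{\binom{k}{2}}$ yields
\begin{equation*}
\log|\tau-\sigma_{m-1}(\tau)|<k\log k-k\log m+\tfrac{k^2\log 2}{2}+O(\log m),
\end{equation*}
where $k\leq\log_2 m$ was used to absorb the subdominant terms from $\log c_k-(k+1)\log m$. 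From $n_k\in[2^k k,f(k)2^k k]$ one obtains $\log m=L+\log k+\theta\log f(k)$ with $L:=k\log 2$ and $\theta\in[0,1]$; substituting this expression for $\log m$ into the display causes the $k\log k$ terms to cancel exactly, leaving
\begin{equation*}
\log|\tau-\sigma_{m-1}(\tau)|<-\tfrac{L^2}{\log 4}+O(\log m\log f(k)).
\end{equation*}

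Bootstrapping the relation $L=\log m-\log k-\theta\log f(k)$ shows that $\log k=\log\log m+O(1)$ (provided $\log f(k)=o(\log m)$, otherwise the claimed bound is vacuous), whence $L=\log m-\log\log m+O(\log f(k))$ and therefore $L^2=(\log m)^2-2\log m\log\log m+O(\log m\log f(k))$. Substituting back into the previous display, and using $k\leq\log_2 n$ together with the monotonicity of $f$ to replace $\log f(k)$ by $\log f(\log n)$ (the factor $1/\log 2$ being absorbed into the implied constant), completes the argument. The only delicate point is the algebraic cancellation of $k\log k$, which reduces the three separate terms $k\log k$, $-k\log m$ and $k^2\log 2/2$ to a single quadratic $-L^2/\log 4$; without it, extracting the refined correction $2\log m\log\log m/\log 4$ with the correct coefficient would be considerably more painful, since one would have to balance three competing asymptotic terms while simultaneously iterating the implicit relation between $k$ and $\log m$.
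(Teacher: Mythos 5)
Your proposal is correct and follows essentially the same route as the paper: extract from the definition of $\X_f$ infinitely many $k$ and $n\in[2^kk,f(k)2^kk]$ with $|\tau-\sigma_{n-1}(\tau)|<g_k(n)$, bound $g_k(n)<c_k n^{-k-1}$ via Lemma~\ref{lem:g_signs}, and convert $k$ into $\log n-\log\log n$ up to an $O(\log f)$ error by a Stirling-type computation (the paper compresses exactly this into ``one concludes as for Lemma~\ref{lem:lb}''). The only cosmetic difference is your proviso $\log f(k)=o(\log m)$ with the vacuousness fallback: since you are proving an upper bound, only the one-sided estimate $\log k\leq\log\log m+O(1)$ is needed (the term $-k\theta\log f(k)$ has a favorable sign and the cross term $2\theta\log m\log f(k)$ goes straight into the error), so the case split can be dropped; your passage from $\log f(k)$ to $\log f(\log n)$ is at the same level of precision as the paper's own ``$k\leq\log n$'' step and is harmless in the intended application $f(x)=\log x$.
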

\begin{proof}
Let $\tau\notin\X_f$. Then there exists arbitrarily large $k,n\in\N$ such that $|\tau -
\sigma_{n-1}(\tau)| < g_k(n)$ and $2^k k\leq n\leq f(k) 2^k k$. In particular, $k\leq \log n$ and we have
\begin{equation*}
k=\frac{\log n-\log\log n}{\log 2}+O(\log(f(\log n))).
\end{equation*}
One then concludes as for Lemma~\ref{lem:lb}.
\end{proof}
\begin{proof}[Proof of Theorem~\ref{thm:asymp}]
Let $\Zz:=\R\setminus (\X_f\cup \Y_g)$ with $f(x)=\log x$ and $g(x)=2^{5x}x^x$. By
Propositions~\ref{prop:upper} and~\ref{prop:lower} we have $\meas(\X_f\cup \Y_g)=0$, whereas from
Lemma~\ref{lem:lb} and Lemma~\ref{lem:ub} we deduce that, for all $\tau\in \Zz$,
\begin{equation*}
0\leq \liminf_{m\to\infty}\frac{\log|\tau - \sigma_{m-1}(\tau)|+\frac{1}{\log 4}(\log m)^2}{\log m \log \log m}
\leq \frac{1}{\log 2},
\end{equation*}
which is Theorem~\ref{thm:asymp} in a stronger form.
\end{proof}
We conclude the section with the following proposition, which implies, in particular, that the conclusion
of Theorem~\ref{thm:asymp} does not hold for all $\tau\in\R$.
\begin{prop}\label{prop:limitation}
Given $f\colon \N \to \R_{>0}$ we can construct $\tau_f\in\R$ such that $|\tau_f - \sigma_{n}(\tau_f)|\leq
f(n)$ for infinitely many $n$. Moreover, the set of $\tau$ having this property is dense in $\R$.
\end{prop}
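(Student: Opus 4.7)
The plan is to build $\tau_f$ as a point in a nested intersection of closed intervals, arranging the construction so that it can start from any non-empty open $I\subseteq\R$; this simultaneously yields density. For $\tau\in\R$ and $N\geq 0$ let
\[
I_N(\tau):=\{\tau'\in\R : s_n(\tau')=s_n(\tau)\text{ for all }1\leq n\leq N\}
\]
denote the set of reals sharing with $\tau$ the first $N$ greedy signs; by the definition of the greedy algorithm this is a (possibly half-open) interval on which $\sigma_N(\cdot)$ takes a common value. Fix an arbitrary bounded non-empty open $I\subseteq\R$. We inductively produce nested open intervals $J_0=I\supseteq J_1\supseteq J_2\supseteq\cdots$ with $\overline{J_{k+1}}\subseteq J_k$, together with integers $N_1<N_2<\cdots$, such that each $J_k$ is compactly contained in $\mathrm{int}(I_{N_k}(\tau_0^{(k)}))$ for some $\tau_0^{(k)}$, is centred at the common value $\sigma_k^\ast$ of $\sigma_{N_k}$ on $J_k$, and has diameter smaller than $f(N_k)$. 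Then any $\tau_f\in\bigcap_k\overline{J_k}$ (nonempty by compactness) lies in every $I_{N_k}(\tau_0^{(k)})$, so $\sigma_{N_k}(\tau_f)=\sigma_k^\ast$ and $|\tau_f-\sigma_{N_k}(\tau_f)|<f(N_k)$, proving the proposition.

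For the inductive step, given $J_k$, choose $\tau_0\in J_k\setminus\Q$ at positive distance $\delta>0$ from $\partial J_k$ and run the greedy algorithm on $\tau_0$. Two facts are key: any $\tau'\neq\tau_0$ with the same greedy signs as $\tau_0$ would satisfy $\tau'=\sum s_n/n=\tau_0$ by Proposition~\ref{prop:A1n}, so $\bigcap_n I_n(\tau_0)=\{\tau_0\}$ and the diameters of the $I_n(\tau_0)$ vanish; and, since a ``non-split'' step at level $n$ (occurring precisely when $\sigma_n(\tau_0)\notin\mathrm{int}(I_n(\tau_0))$) forces $I_{n+1}(\tau_0)=I_n(\tau_0)$, the only way the diameter can decrease is through a ``split'' step at which $\sigma_n(\tau_0)\in\mathrm{int}(I_n(\tau_0))$. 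Consequently splits occur at infinitely many levels. Pick such an $N_{k+1}>N_k$ large enough that $\mathrm{diam}(I_{N_{k+1}}(\tau_0))<\delta$, so that $I_{N_{k+1}}(\tau_0)\subseteq J_k$. Set $\sigma_{k+1}^\ast:=\sigma_{N_{k+1}}(\tau_0)$ and let $d>0$ denote its distance to $\partial I_{N_{k+1}}(\tau_0)$; take $J_{k+1}$ to be the open interval of radius $\min(f(N_{k+1}),d)/2$ centred at $\sigma_{k+1}^\ast$. By construction $\overline{J_{k+1}}\subseteq\mathrm{int}(I_{N_{k+1}}(\tau_0))\subseteq J_k$, completing the step, and density of the set of $\tau_f$ follows since $I$ was arbitrary.

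The main obstacle is the ``infinitely many splits'' claim, which I resolve by the elementary dichotomy above: non-splits leave the greedy interval untouched, so the vanishing of its diameter forces infinitely many splits. A secondary but essential technical point is the half-open nature of the greedy intervals $[\alpha,\beta)$, which I handle by requiring $\overline{J_{k+1}}$ to sit compactly inside $\mathrm{int}(I_{N_{k+1}}(\tau_0))$; this guarantees that the eventual $\tau_f$ never lands on a boundary and so has the correct first $N_k$ greedy signs for every $k$, ensuring $\sigma_{N_k}(\tau_f)=\sigma_k^\ast$. Note that the quantitative decay of $f$ plays no role: since $f(N_{k+1})$ enters only through the radius of $J_{k+1}$, arbitrarily rapidly decaying $f$ is accommodated by simply shrinking $J_{k+1}$.
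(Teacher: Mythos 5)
Your proposal is correct. It is built on the same basic object as the paper's proof --- the greedy-sign intervals $I_N(\tau)$ on which $\sigma_N$ is constant, refined iteratively so that the target ends up within $f(N_k)$ of the common value $\sigma_{N_k}$ --- but the two arguments differ in how they secure the key interiority step and in their packaging. The paper constructs a Cauchy sequence of irrationals $(\tau_i)_i$ with quantitative control ($|\sigma_{m_i}(\tau_i)-\tau_i|<\tfrac12 f(m_i)$, $|\tau_{i+1}-\tau_i|<\tfrac14 f(m_i)$), and it guarantees that the new anchor $\sigma_q(\tau_i)$ lies in $I_q(\tau_i)$ by choosing $q$ to be a new record for $|\sigma_n(\tau_i)-\tau_i|$; you instead prove that there are infinitely many ``split'' levels via the dichotomy (a non-split leaves $I_n(\tau_0)$ unchanged) combined with the fact that $\bigcap_n I_n(\tau_0)=\{\tau_0\}$ --- which follows from Proposition~\ref{prop:A1n} applied to any point sharing all signs with $\tau_0$ --- so that the nested intervals, having singleton intersection, must have diameters tending to $0$. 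That dichotomy-plus-vanishing-diameter lemma is a clean structural substitute for the paper's record-index trick, and it is correct: the half-open intervals $[\alpha,\beta)$ are never degenerate, so the diameter can only shrink at split levels. Your nested-compact-interval formulation also buys two things the paper handles more loosely: density is immediate because the construction can be launched inside any bounded open interval, whereas the paper only remarks that density follows ``along the same lines''; and your insistence that $\overline{J_{k+1}}$ be compactly contained in $\mathrm{int}(I_{N_{k+1}}(\tau_0))$ cleanly disposes of the boundary issue caused by the half-open intervals, ensuring $\sigma_{N_{k+1}}(\tau_f)=\sigma_{k+1}^{\ast}$ for the limit point. (The irrationality of the auxiliary points $\tau_0^{(k)}$ is not actually needed in your argument, though it is harmless; the paper uses irrationality to rule out $\sigma_n(\tau_i)=\tau_i$ in its record-based selection.)
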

\begin{proof}
For simplicity we will only prove that there exists a $\tau_f$ with the required property. Along the same
lines one can show that the set of $\tau$ with this property is dense in $\R$.

We first observe that we can assume $f(n+1)<\tfrac1{4}f(n)$ for all $n\geq 1$ and that $f(1)<1$. Next we
observe that it suffices to construct a sequence of irrational numbers $(\tau_i)_i$ and an increasing
sequence of integers $(m_i)_i$ such that, for all $i\geq 1$,
\begin{equation}\label{eq:asdd}
 \sigma_n(\tau_{i+1})=\sigma_n(\tau_{i})\ \forall n\leq m_i,\qquad |\sigma_{m_i}(\tau_i)-\tau_i|<\tfrac12f(m_i),
 \qquad
 |\tau_{i+1}-\tau_{i}|<\tfrac14f(m_i) .
\end{equation}
Indeed, this implies that for all $j> i$
\begin{equation*}
|\tau_{j}-\tau_i|<\tfrac14(f(m_i)+\dots+f(m_{j-1}))< \tfrac1{2} f(m_i)
\end{equation*}
and thus
\begin{equation*}
|\tau_{j}-\sigma_{m_i}(\tau_i)|\leq |\tau_{i}-\sigma_{m_i}(\tau_i)| + |\tau_{j}-\tau_i|<f(m_i)
\end{equation*}
for all $i\leq j$. It follows that the limit $\tau_f:=\lim_{i\to\infty}\tau_i$ exists and satisfies
$|\tau_f-\sigma_{m_i}(\tau_f)|\leq f(m_i)$ for all $i$, as desired.

We thus just have to see that such sequences exist. To construct the two sequences one proceed as
follows. For $\tau_1$ one takes $m_1=1$ and any irrational $\tau_1$ in
$(1-\tfrac1{2}f(1),1+\tfrac1{2}f(1))$. Clearly, one has $|\tau_1-\sigma_1(\tau_1)|<f(1)$. Now assume that we
have two sequences $(\tau_r)_{r \leq i}$ and $(m_r)_{r\leq i}$ satisfying~\eqref{eq:asdd}; we need to
construct $\tau_{i+1}$ and $m_{i+1}$. For any $\tau\in\R$ and $m\in\N$ the set
\begin{equation*}
I_m(\tau):=\{\alpha\in\R\colon \sigma_{n}(\alpha)=\sigma_{n}(\tau),\ \forall{n\leq m}\}
\end{equation*}
is an interval of non-zero measure containing $\tau$.
Since $\sigma_{m}(\tau_i)\to \tau_i$  as $m\to\infty$ we can find $q>m_i$ such that
$\sigma_{{q}}(\tau_i)$ is in the interior of $I_{m_i}(\tau_i)$,
$|\sigma_{{q}}(\tau_i)-\tau_i|<\tfrac18f(m_i)$ and such that
$|\sigma_q(\tau_i)-\tau_i|<|\sigma_n(\tau_i)-\tau_i|$ for all $n<q$ (notice that
$\sigma_n(\tau_i)\neq\tau_i$ for all $n$ since $\tau_i\notin\Q$). Notice that this last inequality
implies that $\sigma_{{q}}(\tau_i)\in I_q(\tau_i)$. It follows that $I_{m_i}(\tau_i)\cap I_q(\tau_i)$ is
an interval of non-zero measure containing $\sigma_{{q}}(\tau_i)$, and so we can find an irrational
$\tau_{i+1}\in I_{m_i}(\tau_i)\cap I_q(\tau_i)$ with $|\tau_{i+1}-\sigma_{{q}}(\tau_i)|<\frac{1}{2}f(q)$.
Then, defining $m_{i+1}:=q$, we have that $\tau_{i+1}$ and $m_{i+1}$ have the required properties since
one has $\tau_{i+1},\tau_i\in I_{m_i}(\tau_i)$ and
\begin{align*}
& |\tau_{i+1}-\sigma_{{m_{i+1}}}(\tau_{i+1})|
  =   |\tau_{i+1}-\sigma_{{m_{i+1}}}(\tau_{i})|
  <   \tfrac{1}{2}f(m_{i+1})\\
& |\tau_{i+1}-\tau_i|\leq |\sigma_{{q}}(\tau_i)-\tau_i|+|\sigma_{{q}}(\tau_i)-\tau_{i+1}|
  <   \tfrac{1}{8}f(m_i)+\tfrac12f(m_{i+1})
 \leq \tfrac{1}{4}f(m_i),
\end{align*}
where the equality on the first line follows since $\tau_{i+1}\in I_{m_{i+1}}(\tau_i)$.
\end{proof}

\section{The Thue--Morse constant}\label{sec:A8}
Let the family of sequences $\Eps_k:=(\Eps_k(n))_{n \geq 0}$ be defined recursively by
\begin{equation*}
\Eps_{0}(n) := \eps_n
\quad \text{ and }\quad
\Eps_{k+1}(n) := \sum_{m \leq n} \Eps_k(m)
\quad \text{ for } k, n \geq 0,
\end{equation*}
so that the $k$-th sequence is the sequence of partial sums of the $(k-1)$-th sequence, the first one
being the Thue--Morse sequence. The next lemma gives a description of the sequences $\Eps_k$.
\begin{prop}\label{prop:A4n}
For every $k \geq 0$ there exists a finite sequence $W_k:=(w_k(n))_{n=0}^{2^k - 1}$, with
$w_k(n)\in\Z_{\geq 0}$, such that:
\begin{equation}\label{eq:exp_Eps}
\Eps_k =(\eps_0\cdot W_k,\eps_1 \cdot W_k,\eps_2\cdot W_k,\dots).
\end{equation}
Moreover, one has:
\begin{enumerate}[$(i)$]
\setlength\itemsep{0.5em}
\item $\sum_{n=0}^{2^k-1} w_k(n) = 2^{\binom{k}{2}}$;
\item $w_k(2^k - 1) = \cdots = w_k(2^k - k) = 0$;
\item $w_k(2^k-k-1-n)=w_k(n)$ for all $n$, with $0\leq n\leq 2^{k}-k-1$.
\item $w_k(n) \leq 2^{\binom{k-1}{2}}$ for all $n \in [0,2^k)$ and the equality holds if and
      only if $2^{k-1}-k-1\leq n\leq 2^{k-1}-1$.
\end{enumerate}
\end{prop}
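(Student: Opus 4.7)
The plan is to proceed by induction on $k$, with an explicit recursive construction of $W_{k+1}$ from $W_k$. The base case $k=0$ is $W_0=(1)$ and all four properties are trivial or vacuous. For the inductive step, set $S_k:=\sum_{\ell=0}^{2^k-1}w_k(\ell)$ and $T_k(r):=\sum_{\ell=0}^{r}w_k(\ell)$, with $T_k(-1):=0$, and define
\begin{equation*}
w_{k+1}(\ell):=T_k(\ell)\ \text{for }0\leq\ell<2^k,
\qquad
w_{k+1}(\ell):=S_k-T_k(\ell-2^k)\ \text{for }2^k\leq\ell<2^{k+1}.
\end{equation*}

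First I would verify that this choice of $W_{k+1}$ produces the expansion~\eqref{eq:exp_Eps}. Writing any $n\geq 0$ as $n=2^{k+1}Q+2^kR_1+R_0$ with $R_1\in\{0,1\}$ and $0\leq R_0<2^k$, the inductive hypothesis on $\Eps_k$ gives
\begin{equation*}
\Eps_{k+1}(n)=S_k\sum_{j=0}^{2Q+R_1-1}\eps_j+\eps_{2Q+R_1}\,T_k(R_0),
\end{equation*}
and the folding property $\eps_{2q}=\eps_q$, $\eps_{2q+1}=-\eps_q$ implies $\sum_{j=0}^{2Q-1}\eps_j=0$ and $\sum_{j=0}^{2Q}\eps_j=\eps_Q$. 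Combining these collapses the expression to $\eps_Q\,w_{k+1}(2^kR_1+R_0)$, which is exactly what~\eqref{eq:exp_Eps} asserts.

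Next I would establish properties (i)--(iv) for $W_{k+1}$ using those of $W_k$. Property (i) is immediate: $\sum_\ell w_{k+1}(\ell)=\sum_\ell T_k(\ell)+\sum_\ell(S_k-T_k(\ell))=2^kS_k=2^{\binom{k+1}{2}}$. For (ii), (ii) for $W_k$ forces $T_k(2^k-j)=S_k$ for $j=1,\dots,k+1$ (the final $k$ entries of $W_k$ vanish), so $w_{k+1}(2^{k+1}-j)=S_k-T_k(2^k-j)=0$ in the same range. For (iii), the key identity is
\begin{equation*}
T_k(n)+T_k(2^k-k-2-n)=S_k\qquad\text{for }0\leq n\leq 2^k-k-2,
\end{equation*}
which follows by the change of variables $m\mapsto 2^k-k-1-m$ using (iii) and (ii) for $W_k$; this gives $w_{k+1}(n)=w_{k+1}(2^{k+1}-k-2-n)$ when $n$ lies in the ``first half,'' and a symmetric argument handles the other cases. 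For (iv), $w_{k+1}(\ell)\leq S_k=2^{\binom{k}{2}}$ is immediate, and equality forces either $T_k(\ell)=S_k$ (first half) or $T_k(\ell-2^k)=0$ (second half); by (ii) for $W_k$ the former happens exactly on the block where $T_k$ has already saturated, which pins down the claimed index range.

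The proof is conceptually clean: once the formula for $W_{k+1}$ is written down, everything follows from the inductive hypothesis and the folding identity. The main point where one must be careful is the case analysis in (iii) and (iv), where one needs to exploit the block of $k$ trailing zeros of $W_k$ (i.e., property (ii)) to translate partial sums into the symmetry and equality statements for $W_{k+1}$; once this bookkeeping is in place, the rest is a routine verification.
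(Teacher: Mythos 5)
Your proposal is correct in substance but takes a genuinely different route from the paper. The paper argues via generating functions: from $E_k(x)=E_0(x)/(1-x)^k=Q_k(x)E_0(x^{2^k})$ with $Q_k(x)=\prod_{j=0}^{k-1}(1+x+\cdots+x^{2^j-1})$ it reads off~\eqref{eq:exp_Eps}, and then gets $(i)$ from $Q_k(1)$, $(ii)$ from $\deg Q_k=2^k-k-1$, $(iii)$ from the palindromy $Q_k(x)=x^{2^k-k-1}Q_k(1/x)$, and $(iv)$ from the lattice-point formula $w_k(n)=\#\{(a_0,\dots,a_{k-1}):\sum_j a_j=n,\ 0\leq a_j\leq 2^j-1\}$. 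Your recursion $w_{k+1}(\ell)=T_k(\ell)$ for $\ell<2^k$ and $w_{k+1}(\ell)=S_k-T_k(\ell-2^k)$ for $2^k\leq\ell<2^{k+1}$ is exactly the coefficient-level shadow of multiplying $Q_k(x)$ by $(1-x^{2^k})/(1-x)$, and your verification of~\eqref{eq:exp_Eps} from the folding identity $\eps_{2q}+\eps_{2q+1}=0$ is correct, as are your deductions of $(i)$--$(iii)$. The factorization buys the paper immediate proofs of $(i)$--$(iii)$ and a closed formula for $w_k(n)$; your induction is more elementary and self-contained, at the price of the case bookkeeping you acknowledge (in $(iii)$ you also need the easy middle case $2^k-k-1\leq n\leq 2^k-1$, where both $n$ and its mirror lie in the saturated block).

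Two caveats on $(iv)$. First, property $(ii)$ alone does not locate the saturation point of $T_k$: to know that $T_k(\ell)<S_k$ for $\ell<2^k-k-1$ (and that $T_k(\ell-2^k)>0$ throughout the second half, so equality never occurs there) you need $w_k(2^k-k-1)>0$ and $w_k(0)>0$. This follows from $(iii)$ together with $w_k(0)=1$, which your recursion gives instantly ($w_{k+1}(0)=T_k(0)=w_k(0)$), but it is not among hypotheses $(i)$--$(iv)$, so you should carry it along in the induction. Second, once that is done, your argument pins the equality set of $W_{k+1}$ down to $2^k-(k+1)\leq\ell\leq 2^k-1$, not to the printed interval $2^k-(k+1)-1\leq\ell\leq 2^k-1$; so the phrase ``pins down the claimed index range'' is not literally accurate. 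This is not a defect of your method: the left endpoint of $(iv)$ as printed is off by one. Indeed $\sum_{j=0}^{k-2}(2^j-1)=2^{k-1}-k$ (not $2^{k-1}-k-1$ as in the paper's proof), the listed $W_3,W_4,W_5$ attain their maximum only on $[2^{k-1}-k,\,2^{k-1}-1]$, and the printed range would even contradict $(iii)$ for $k\geq 3$, since the reflection $n\mapsto 2^k-k-1-n$ would then force $w_k(2^{k-1})$ to be maximal as well. So your approach in fact proves the corrected statement; just state the range as $2^{k-1}-k\leq n\leq 2^{k-1}-1$ rather than claiming the printed one verbatim.
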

\begin{proof}
We give a proof using generating functions. Given a sequence of integers $(a_n)_{n \geq 0}$ with
generating function $F(x)$, the generating function of $\big(\sum_{m \leq n} a_n\big)_{n \geq 0}$ is
equal to $F(x)/(1 - x)$. Now, letting $E_k(x)$ denote the generating function of $\Eps_k$ for $k\geq 0$,
we have
\begin{equation*}
E_0(x) := \sum_{n = 0}^\infty \eps_n x^n = \prod_{j = 0}^\infty (1 - x^{2^j})
\end{equation*}
and thus
\begin{equation}\label{eq:A34n}
E_k(x)
= \frac{E_0(x)}{(1 - x)^k}
= \prod_{j = 0}^{k-1} \frac{1 - x^{2^j}}{1 - x} \prod_{j = k}^\infty (1 - x^{2^j})
= Q_k(x) E_0(x^{2^k}),
\end{equation}
where
\begin{equation}\label{eq:gs}
Q_k(x)
:= \prod_{j = 0}^{k - 1} (1 + x + x^2 + \cdots + x^{2^j - 1})
= \sum_{n = 0}^{2^k - 1} w_k(n) x^n,
\end{equation}
for some sequence of integers $(w_k(n))_{n = 0}^{2^k - 1}$ with
\begin{equation}\label{eq:def_w}
w_k(n):=\#\{(a_0,\dots,a_{k-1})\in\Z_{\geq 0}^{k}\colon a_0+\dots+a_{k-1}=n,\ a_j\leq 2^{j}-1\ \forall j\}.
\end{equation}
At this point,~\eqref{eq:exp_Eps} follows immediately from~\eqref{eq:A34n}. Also, $Q_k(1) =
2^{\binom{k}{2}}$ implies~$(i)$, and $\deg Q_k = 2^k - k - 1$ implies~$(ii)$. Furthermore, we have
$Q_k(x)=Q_k(1/x)x^{2^{k}-k-1}$ and so $(iii)$ follows. Finally, if $n$ satisfies
\begin{equation*}
\sum_{j=0}^{k-2}(2^j-1)=2^{k-1}-k-1\leq n\leq 2^{k-1}-1,
\end{equation*}
then for any $a_0,\dots,a_{k-2}$ as in~\eqref{eq:def_w} there exists a unique $a_{k-1}$ such that
$a_0+\dots+a_{k-1}=n$. Thus, for all such $n$ one has $w_k(n)=\prod_{j=0}^{k-2}{2^j}=2^{\binom{k-1}{2}}$
and the same reasoning gives $w_k(n)<2^{\binom{k-1}{2}}$ for all the other possible values of $n$, so
that $(iv)$ follows.
\end{proof}
\begin{cor}\label{cor:A5n}
For $n \geq 1$ let $n=2^\mu n'$ with $n'$ odd. Then $\Eps_1(n-1) = \Eps_2(n-1) = \cdots = \Eps_{\mu}(n-1)
= 0$ and $\Eps_{\mu+1}(n-1) =- 2^{\binom{\mu}{2}}\eps_{n}$.
\end{cor}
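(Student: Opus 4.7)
The plan is to read off both statements directly from the block decomposition
\[
\Eps_k(m) = \eps_{\lfloor m/2^k \rfloor}\, w_k(m \bmod 2^k)
\]
furnished by Proposition~\ref{prop:A4n}, together with the vanishing/extremal properties $(ii)$ and $(iv)$ of the weights $w_k$.

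First I would handle the vanishing $\Eps_1(n-1) = \cdots = \Eps_\mu(n-1) = 0$. Since $n = 2^\mu n'$ is divisible by $2^k$ for every $k \in [1,\mu]$, we have
\[
n - 1 \equiv 2^k - 1 \pmod{2^k},
\qquad
\lfloor (n-1)/2^k\rfloor = n/2^k - 1.
\]
Hence $\Eps_k(n-1) = \eps_{n/2^k - 1}\, w_k(2^k - 1)$, which is zero by part $(ii)$ of Proposition~\ref{prop:A4n} (in the range $1 \leq k \leq \mu$, $w_k(2^k-1)$ is among the $k$ terminal zeros of $W_k$).

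Next I would compute $\Eps_{\mu+1}(n-1)$. Writing $n' = 2s+1$ with $s=(n'-1)/2 \geq 0$, we get $n-1 = 2^{\mu+1}s + (2^\mu - 1)$, so the quotient is $s$ and the remainder is $2^\mu - 1$. Therefore
\[
\Eps_{\mu+1}(n-1) = \eps_{(n'-1)/2}\, w_{\mu+1}(2^\mu - 1).
\]
The residue $2^\mu - 1$ sits in the interval $[2^\mu - \mu - 2,\, 2^\mu - 1]$, which is exactly where property $(iv)$ of Proposition~\ref{prop:A4n} (with $k = \mu+1$) forces $w_{\mu+1}$ to attain its maximal value $2^{\binom{\mu}{2}}$. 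Thus $\Eps_{\mu+1}(n-1) = 2^{\binom{\mu}{2}}\, \eps_{(n'-1)/2}$.

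It remains to identify $\eps_{(n'-1)/2}$ with $-\eps_n$. Using the defining recurrences of the Thue--Morse sequence, $\eps_{2m} = \eps_m$ applied $\mu$ times gives $\eps_n = \eps_{2^\mu n'} = \eps_{n'}$, and $\eps_{2m+1} = -\eps_m$ applied to $n' = 2\cdot(n'-1)/2 + 1$ gives $\eps_{n'} = -\eps_{(n'-1)/2}$. Combining yields $\eps_{(n'-1)/2} = -\eps_n$, so $\Eps_{\mu+1}(n-1) = -2^{\binom{\mu}{2}}\eps_n$, as claimed. No step presents a real obstacle; the only mild care is to check that $2^\mu - 1$ indeed falls in the maximality window of part $(iv)$, which is immediate since $2^\mu - 1 \leq 2^\mu - 1$ and $2^\mu - 1 \geq 2^\mu - \mu - 2$ for every $\mu \geq 0$.
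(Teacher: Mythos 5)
Your proof is correct and takes essentially the same route as the paper's: both read $\Eps_k(n-1)$ off the block decomposition~\eqref{eq:exp_Eps}, use part $(ii)$ of Proposition~\ref{prop:A4n} for the vanishing and part $(iv)$ for the value $2^{\binom{\mu}{2}}$ at residue $2^\mu-1$, and finish with the Thue--Morse recurrences to identify the sign. The only cosmetic difference is that the paper phrases the vanishing step via periodicity of $|\Eps_k|$ and the sign step via invariance of the binary digit sum under doubling, which amounts to the same computation as your repeated use of $\eps_{2m}=\eps_m$ and $\eps_{2m+1}=-\eps_m$.
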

\begin{proof}
By~\eqref{eq:exp_Eps} we have $|\Eps_k(n-1)|=|\Eps_k(m-1)|$ if $m\equiv n\pmod{2^{k}}$. In particular,
if $\mu\leq k$, then $|\Eps_k(n-1)|=|\Eps_k(2^k-1)|=0$ by Proposition~\ref{prop:A4n}, $(ii)$. Moreover
writing $n'=2n''+1$ we have $n-1=2^{\mu+1}n''+2^\mu-1$. Thus, by~\eqref{eq:exp_Eps} and
Proposition~\ref{prop:A4n}, $(iv)$, we have $\Eps_{\mu+1}(n-1) = 2^{\binom{\mu}{2}}\eps_{n''} =
-2^{\binom{\mu}{2}}\eps_{n'}=-2^{\binom{\mu}{2}}\eps_{n}$ (because the multiplication by $2$ does not
modify the number of non-zero digits in the binary representation).
\end{proof}
The first few sequences $W_k$ are as follows:
\begin{align*}
&W_0=(1), & \hspace{-8em}W_1=(1,0),\hspace{8.5em}W_2=(1,1,0,0), \\
&W_3=(1,2,2,2,1,0,0,0), & \hspace{-2em}W_4=(1,3,5,7,8,8,8,8,7,5,3,1,0,0,0,0),\\
&W_5=&\hspace{-12.3em}(1,4,9,16,24,32,40,48,55,60,63,64,64,64,64,64,63,\dots,1,0,\dots,0).
\end{align*}
\begin{remark}
The sequence arising from the $W_k$ also appeared and was studied in the recent
work~\cite[Section~5]{VignatWakhare}. In this paper Vignat and Wakhare defined the numbers $\alpha_m^N$
such that
\begin{equation*}
\sum_{m_1=0}^{2^1-1}\cdots \sum_{m_N=0}^{2^N-1}g(m_1+\cdots+m_N)=\sum_{m=0}^{2^N-N-1}\alpha_m^Ng(m).
\end{equation*}
for any function $g$. It is not difficult to see (cf.~\eqref{eq:gs} and~\cite[(5.1)]{VignatWakhare}) that
the two sequences coincide, i.e., $w_{k-1}(n)=\alpha^k_n$ for $k\geq 1$, $0\leq n< 2^k$.
\end{remark}
\begin{remark}
The vectors $W_k$ are related to the Fabius function.
This can be defined as the unique solution $F : [0,1] \to \mathbb{R}$ of the following functional differential equation problem
\begin{align*}
F(0) &= 0; \\
F(1 - x) &= 1 - F(x), \quad\; x \in [0,1]; \\
F^\prime(x) &= 2 F(2x), \quad x \in [0,1/2].
\end{align*}
It can also be defined as the cumulative distribution function of $\sum_{n=1}^\infty X_n / 2^n$, where $X_1, X_2, \dots$ are independent and uniformly distributed random variables on the unit interval $[0,1]$.
It is an example of an infinitely differentiable function that is nowhere analytic~\cite{MR2329220,MR0197656}. 
Setting $F_k^\prime(n / 2^k) := 2^{(3k - k^2) / 2} w_k(n)$, for all integers $k \geq 0$ and $n \in [0,2^k)$, we have the identities
\begin{align*}
&\frac1{2^k} \sum_{n = 0}^{2^k - 1} F_k^\prime (n / 2^k) = 1; \\
&F_k^\prime\!\left(1 - \frac{n + k + 1}{2^k}\right) = F_k^\prime\left(\frac{n}{2^k}\right), \qquad\qquad\qquad\quad\;\; n\in[0,...,2^k - k - 1]; \\
&F_k^\prime\!\left(\frac{n}{2^k}\right) = 2 \cdot \frac1{2^{k-1}} \sum_{m \leq n} F_{k-1}^\prime\!\left(\frac{m}{2^{k-1}}\right), \quad n \in [0,..., 2^{k-1} - 1];
\end{align*}
which are discrete versions of
\begin{align*}
&\int_0^1 F^\prime(x)\d x = 1;\\
&F^\prime(1 - x) = F^\prime(x), \quad\, x \in [0,1]; \\
&F^\prime(x) = 2 \int_0^{2x} F^\prime(t) \d t, \quad x \in [0,1/2].
\end{align*}
Indeed, $F_k^\prime(x)$ approximates $F^\prime(x)=2F(2x)$ as $k \to +\infty$ (see Figure~\ref{fig:fabius}).
See~\cite{ReynaPreprint2,ReynaPreprint1,HauglandPreprint} for more information about the values of the Fabius function at dyadic fractions.

\begin{figure}[h]
\includegraphics[width=0.485\textwidth]{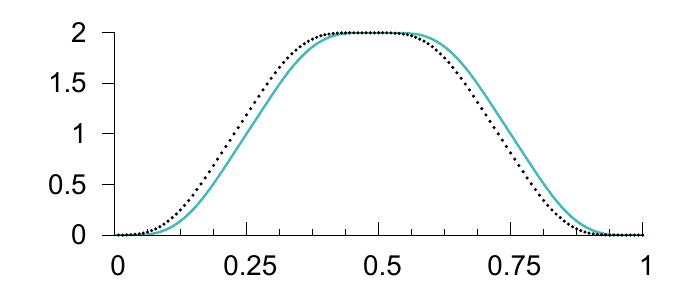}
\hspace{0.2cm}
\includegraphics[width=0.485\textwidth]{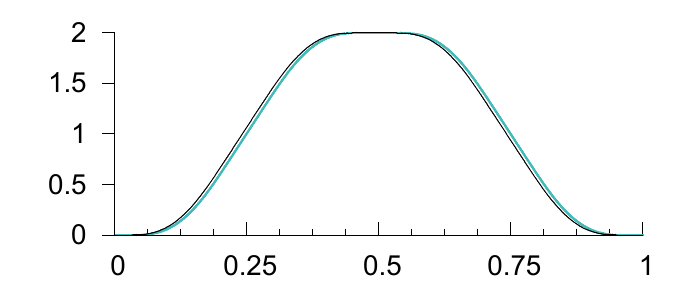}
\caption{The graphs of the derivative of the Fabius function (solid lines) and of the vectors
$W_6$ (left, dotted) and $W_9$ (right, dotted). The vectors are scaled to fit in $[0,2]$.}
\label{fig:fabius}
\end{figure}

\end{remark}

Proposition~\ref{prop:A4n} shows that the sequence of partial sums $\Eps_1$ is bounded. In particular, by
partial summation the series
\begin{equation}\label{eq:alt_tau}
\tau_0:=\sum_{n=1}^{+\infty} \frac{\eps_{n-1}}{n}
       =\sum_{n=1}^{+\infty} \Eps_1(n-1)\bigg(\frac{1}{n}-\frac{1}{n+1}\bigg)
\end{equation}
converges. Applying partial summation repeatedly, one also obtains the identity
\begin{equation}\label{eq:A35n}
\tau_0 - \sum_{m=1}^n\frac{\eps_{m-1}}{m}
= \sum_{m>n}\frac{\eps_{m-1}}{m}
= -\sum_{\ell=1}^k \Eps_\ell(n-1)G_{\ell-1}(n+1)
  + \sum_{m>n} \Eps_k(m-1)G_k(m),
\end{equation}
for all $k\geq 0$ and $n\geq 1$, where $G_k$ is defined by the recurrence relation
\begin{equation*}
G_0(x):= \frac{1}{x},
\quad
\text{and}
\quad
G_{k+1}(x) := G_k(x) - G_k(x+1)
\qquad
\text{for $k\geq 1$}.
\end{equation*}
Notice that the definition is similar to the one of $g_k$, but the shift in the second argument is
different. The following lemma shows that $G_k$ can be written also as a simple rational function.
\begin{lem}\label{lem:A6}
For every $k$ we have
\begin{align*}
G_k(x) = \frac{k!}{\prod_{\ell=0}^k (x+\ell)}.
\end{align*}
In particular, $0 < G_k(x)\leq k!/x^{k+1}$ when $x>0$ and $G_k(x)x^{k+1} = k!(1 + O(1/x))$ as $x\to\infty$.
\end{lem}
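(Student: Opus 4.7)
The plan is to prove the closed form $G_k(x) = k!/\prod_{\ell=0}^k(x+\ell)$ by induction on $k$, and then read off the two consequences directly.

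For the base case $k=0$ the identity is just the definition $G_0(x) = 1/x$. For the inductive step, assuming $G_k(x) = k!/\prod_{\ell=0}^k (x+\ell)$, I would compute
\begin{equation*}
G_{k+1}(x) = G_k(x) - G_k(x+1)
= \frac{k!}{\prod_{\ell=0}^k (x+\ell)} - \frac{k!}{\prod_{\ell=1}^{k+1} (x+\ell)},
\end{equation*}
bring the two fractions to a common denominator $\prod_{\ell=0}^{k+1}(x+\ell)$, observe that the numerator becomes $k!\bigl((x+k+1) - x\bigr) = (k+1)!$, and conclude $G_{k+1}(x) = (k+1)!/\prod_{\ell=0}^{k+1}(x+\ell)$, as claimed.

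Once this closed form is in hand, positivity of $G_k(x)$ on $(0,\infty)$ is immediate since every factor $x+\ell$ is positive. The upper bound $G_k(x) \leq k!/x^{k+1}$ follows from the trivial inequality $\prod_{\ell=0}^k (x+\ell) \geq x^{k+1}$ for $x>0$. Finally, the asymptotic $G_k(x) x^{k+1} = k!\bigl(1+O(1/x)\bigr)$ follows by expanding
\begin{equation*}
\prod_{\ell=0}^{k} \Bigl(1+\tfrac{\ell}{x}\Bigr) = 1 + \frac{1}{x}\sum_{\ell=0}^{k}\ell + O_k(1/x^2) = 1 + O_k(1/x)
\end{equation*}
as $x\to\infty$, and inverting.

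There is no real obstacle here: the induction is a one-line telescoping identity, and the two consequences are elementary estimates on the product in the denominator. The only mild care needed is to keep the indices of the product straight when shifting $x \mapsto x+1$ in the inductive step.
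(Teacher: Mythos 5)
Your proof is correct and follows the same route as the paper: induction on $k$ via the telescoping identity $G_{k+1}(x)=G_k(x)-G_k(x+1)$, with the numerator combining to $(k+1)!$, and the two consequences read off from the closed form exactly as you describe.
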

\begin{proof}
We proceed by induction on $k$. The formula is trivial for $k=0$. Using the recursive definition of $G_k$
and the inductive hypothesis we get:
\begin{align*}
G_{k+1}(x)
&= G_k(x) - G_k(x+1)
 = \frac{k!}{\prod_{\ell=0}^k (x+\ell)} - \frac{k!}{\prod_{\ell=0}^k (x+\ell+1)}\\
&= \frac{k!}{\prod_{\ell=0}^k (x+\ell)} - \frac{k!}{\prod_{\ell=1}^{k+1} (x+\ell)}
 = \frac{(k+1)!}{\prod_{\ell=0}^{k+1} (x+\ell)}.\qedhere
\end{align*}
\end{proof}
We now give the following lemma which implies that the series defining $\tau_0$ gives its greedy
representation.
\begin{lem}\label{lem:A7}
We have $\tau_0>0$. Moreover, for $n \geq 1$ let $n=2^\mu n'$ with $n'$ odd. Then
\begin{equation}\label{eq:sign_tau}
\eps_{n}\sum_{m>n}^\infty\frac{\eps_{m-1}}{m}> 2^{\binom{\mu}{2}} G_{\mu+1}(n+2^{\mu+1})>0.
\end{equation}
\end{lem}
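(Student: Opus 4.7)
The plan is to apply the Abel-summation identity~\eqref{eq:A35n} with $k=\mu+1$. By Corollary~\ref{cor:A5n}, $\Eps_\ell(n-1)=0$ for $1\le\ell\le\mu$ and $\Eps_{\mu+1}(n-1)=-2^{\binom{\mu}{2}}\eps_n$, so all but one boundary term vanishes and
\begin{equation*}
T(n):=\sum_{m>n}\frac{\eps_{m-1}}{m}
= 2^{\binom{\mu}{2}}\eps_n G_\mu(n+1) + R,\qquad R:=\sum_{m>n}\Eps_{\mu+1}(m-1)\,G_{\mu+1}(m).
\end{equation*}
After multiplying by $\eps_n$, the leading quantity $2^{\binom{\mu}{2}}G_\mu(n+1)$ already exceeds the target $2^{\binom{\mu}{2}}G_{\mu+1}(n+2^{\mu+1})$ by a factor of order $n$ (immediate from the closed form in Lemma~\ref{lem:A6}); the entire difficulty is to show that $\eps_n R$ cannot cancel this leading term.

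To control $R$, I would exploit Proposition~\ref{prop:A4n}, which gives $\Eps_{\mu+1}(m-1)=\eps_i\,w_{\mu+1}(j)$ for $m-1=i\cdot 2^{\mu+1}+j$, together with the identity
\begin{equation*}
\sum_{j=0}^{2^{\mu+1}-1} w_{\mu+1}(j)\,G_{\mu+1}(x+j+1) = g_{\mu+1}(x+1),
\end{equation*}
which follows from the beta-integral $G_{\mu+1}(y)=\int_0^1 u^{y-1}(1-u)^{\mu+1}\d u$ combined with the factorization $Q_{\mu+1}(u)(1-u)^{\mu+1}=\prod_{j=0}^\mu(1-u^{2^j})$. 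Grouping $R$ by complete blocks $i\ge\beta:=(n'+1)/2$ collapses each such block to a single $g_{\mu+1}$-term, and the partial block at $i=\beta-1$ (using the folding identity $\eps_{\beta-1}=-\eps_n$) combines with $2^{\binom{\mu}{2}}G_\mu(n+1)$ to produce $g_\mu(n+1)$, yielding the clean identity
\begin{equation*}
\eps_n T(n) = g_\mu(n+1) + \eps_n T(n+2^\mu).
\end{equation*}
This identity is also immediate from folding, since $\sum_{\ell=0}^{2^\mu-1}\eps_{n+\ell}/(n+\ell+1)=\eps_n g_\mu(n+1)$; the block analysis is the bridge between the folding form and the Abel form.

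Finally, since $v_2(n+2^\mu)\ge\mu+1$, applying~\eqref{eq:A35n} a second time to $T(n+2^\mu)$, together with $|\Eps_k|\le 2^{\binom{k-1}{2}}$ (Proposition~\ref{prop:A4n}(iv)) and the telescope $\sum_{m>N}G_k(m)=G_{k-1}(N+1)$, yields an explicit upper bound on $|T(n+2^\mu)|$ in terms of $G_{\mu+1}(n+2^\mu+1)$. The claim then reduces to an elementary comparison between $g_\mu(n+1)$, $G_{\mu+1}(n+2^\mu+1)$ and $G_{\mu+1}(n+2^{\mu+1})$. The main obstacle is that in the minimal regime $n=2^\mu$ the crude bound of Lemma~\ref{lem:g_signs} is only barely too weak, so I would replace it with the sharper representation
\begin{equation*}
g_\mu(x)=\sum_{j=0}^{2^\mu-\mu-1} w_\mu(j)\,G_\mu(x+j),
\end{equation*}
(the same beta-function computation applied one level lower), which gives $g_\mu(n+1)\ge 2^{\binom{\mu}{2}}G_\mu(n+2^\mu-\mu)$ and is precisely what closes the estimate. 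The assertion $\tau_0>0$ then follows by writing $\tau_0=1+T(1)$ and deriving the strict bound $T(1)>-1$ from the same Abel framework at $n=1$: here $|R|\le\sum_{m>1}G_1(m)=1/2$ is strict because $\Eps_1(m-1)$ vanishes at every even $m$.
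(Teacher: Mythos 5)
Your preparatory steps are correct and even elegant: the beta-integral identities $\sum_j w_k(j)G_k(x+j)=g_k(x)$ do hold, the folding identity $\eps_n\sum_{m>n}\eps_{m-1}/m=g_\mu(n+1)+\eps_n T(n+2^\mu)$ is valid, and your argument that $\tau_0=1+T(1)>0$ works. The gap is in the final quantitative step, which is where the entire difficulty of the lemma lies. Writing $N:=n+2^\mu$, the bound you extract from a second application of~\eqref{eq:A35n} at level $\mu+2$ is $|T(N)|\leq\big(|\Eps_{\mu+2}(N-1)|+2^{\binom{\mu+1}{2}}\big)G_{\mu+1}(N+1)\leq 2^{\binom{\mu+1}{2}+1}G_{\mu+1}(n+2^\mu+1)$, and in the critical regime $n\asymp 2^\mu$ this is \emph{not} dominated by $g_\mu(n+1)$: it exceeds your proposed lower bound $g_\mu(n+1)\geq 2^{\binom{\mu}{2}}G_\mu(n+2^\mu-\mu)$ by a factor that grows like $\mu$, so no fixed sharpening of the $g_\mu$ estimate can close it. Concretely, take $\mu=3$, $n=8$ (so $N=16$, $v_2(N)=\mu+1$ and the boundary term is genuinely present): your lower bound is $2^{\binom{3}{2}}G_3(13)=48/43680\approx 1.10\cdot 10^{-3}$, while your tail bound is $2^{\binom{4}{2}+1}G_4(17)=128\cdot 24/2441880\approx 1.26\cdot 10^{-3}$; the difference is already negative before the target $2^{\binom{3}{2}}G_4(24)$ is even subtracted, so the ``elementary comparison'' you appeal to is false.

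The idea you are missing is the one the paper uses: stay at level $\mu+1$ and do not give away the telescoped remainder wholesale. In the paper's proof, \eqref{eq:A35n} with $k=\mu+1$ gives the boundary term $2^{\binom{\mu}{2}}\eps_n G_\mu(n+1)$ plus the remainder $\sum_{m>n}\Eps_{\mu+1}(m-1)G_{\mu+1}(m)$, and the remainder is bounded by $2^{\binom{\mu}{2}}\big(\sum_{m>n}G_{\mu+1}(m)-G_{\mu+1}(c)\big)=2^{\binom{\mu}{2}}\big(G_\mu(n+1)-G_{\mu+1}(c)\big)$, where $c=n+2^\mu$ is the first multiple of $2^{\mu+1}$ past $n$ and the term $G_{\mu+1}(c)$ may be deleted because $\Eps_{\mu+1}(c-1)=0$. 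The telescoped bound then cancels the boundary term exactly, and the single saved term $2^{\binom{\mu}{2}}G_{\mu+1}(c)\geq 2^{\binom{\mu}{2}}G_{\mu+1}(n+2^{\mu+1})$ \emph{is} the asserted lower bound; no size comparison between $g_\mu$ and the tail is needed. (Your own framework could be salvaged by bounding $|T(N)|\leq 2^{\binom{\mu}{2}}G_\mu(N+1)$ at level $\mu+1$, where no boundary term arises since $v_2(N)\geq\mu+1$, and then noting $G_\mu(n+2^\mu-\mu)-G_\mu(n+2^\mu+1)=\sum_{i=0}^{\mu}G_{\mu+1}(n+2^\mu-\mu+i)>G_{\mu+1}(n+2^{\mu+1})$; but with the level-$(\mu+2)$ estimate as you wrote it, the proof does not go through.)
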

\begin{proof}
Since $|\Eps_1(n)|\leq 1$ for all $n$ and $\Eps_1(0)=1$, $\Eps_1(1)=0$, by~\eqref{eq:alt_tau} we have
\begin{equation*}
\tau_0 \geq \frac{1}{2} - \sum_{n=3}^{+\infty} \bigg(\frac{1}n-\frac{1}{n+1}\bigg)
=\frac{1}{6}.
\end{equation*}
In particular $\tau_0>0$.
\smallskip

Applying~\eqref{eq:A35n} with $k=\mu+1$ gives
\begin{equation}\label{eq:A35bn}
\sum_{m>{n}}\frac{\eps_{m-1}}{m}
= -\, \Eps_{\mu+1}(n-1)G_{\mu}(n+1)
  + \sum_{m>n} \Eps_{\mu+1}(m-1)G_{\mu+1}(m),
\end{equation}
since by Corollary~\ref{cor:A5n} one has $\Eps_{\ell}(n-1)=0$ for all $\ell\leq\mu$. By
Proposition~\ref{prop:A4n}, $(iv)$, and the periodicity of $(|\Eps_{\mu+1}(n)|)_n$, we have
$|\Eps_{\mu+1}(m)|\leq 2^{\binom{\mu}{2}}$ and $\Eps_{\mu+1}(c-1)=0$ where $c$ is the smallest multiple
of $2^{\mu+1}$ such that $c>n$; in particular, $c<n+2^{\mu+1}$. Thus,
\begin{equation*}
\Big|\sum_{m>n} \Eps_{\mu+1}(m-1)G_{\mu+1}(m)\Big|
\leq 2^{\binom{\mu}{2}}\Big(\sum_{m>n} G_{\mu+1}(m)-  G_{\mu+1}(c)\Big)
= 2^{\binom{\mu}{2}} \big(G_{\mu}(n+1)- G_{\mu+1}(c)\big),
\end{equation*}
where in the last two steps we have used the positivity and the telescoping property of $G_{\mu+1}$.
Also, by Corollary~\ref{cor:A5n} we have $\Eps_{\mu+1}(n-1)=-\eps_{n}\, 2^{\binom{\mu}{2}} $.
Thus, \eqref{eq:A35bn} gives
\begin{equation*}
2^{\binom{\mu}{2}} G_{\mu}(n+1)-\eps_{n} \sum_{m>{n}}\frac{\eps_{m-1}}{m}
< \Big|\sum_{m>n} \Eps_{\mu+1}(m-1)G_k(m)\Big|
< 2^{\binom{\mu}{2}} \big(G_{\mu}(n+1)- G_{\mu+1}(c)\big)
\end{equation*}
and the claimed inequality follows since $G_{\mu+1}(c)\geq G_{\mu+1}(n+2^{\mu+1})$.
\end{proof}
We are now in position to prove the claims in Theorem~\ref{thm:A5}.
\begin{proof}[Proof of Theorem~\ref{thm:A5}]
The fact that $s_n=s_n(\tau_0)=\eps_{n-1}$ for all $n\geq 1$ follows from Lemma~\ref{lem:A7}, by proceeding
as in the proof of Corollary~\ref{cor:u}.
\smallskip

Let $k\geq 1$ and let $n=2^k n'$ with $n'$ odd. Corollary~\ref{cor:A5n} shows that
$\Eps_1(n-1)=\cdots=\Eps_k(n-1)=0$. Also, Proposition~\ref{prop:A4n} gives that $|\Eps_k(m)|\leq
2^{\binom{k-1}{2}}$ for all $m$. Hence,~\eqref{eq:A35n} gives
\begin{equation*}
|\tau_0 - \sigma_n|
=    \Big|\sum_{m>n} \Eps_k(m-1)G_k(m)\Big|
\leq 2^{\binom{k-1}{2}}\sum_{m>n} G_k(m),
\end{equation*}
since $G_k(m)$ is positive. Recalling the telescoping definition of $G_k$ and the bound for $G_{k-1}$
given in Lemma~\ref{lem:A6}, we get
\begin{equation}\label{eq:uto}
|\tau_0 - \sigma_n|
\leq 2^{\binom{k-1}{2}}G_{k-1}(n+1)
\leq 2^{\binom{k-1}{2}}(k-1)!/n^k,
\end{equation}
which is the first claim, since $2^{\binom{k-1}{2}}(k-1)! = c_{k-1}$. Moreover, applying~\eqref{eq:A35n}
with $k+2$ in place of $k$, gives
\begin{equation*}
\tau_0 - \sigma_n
= -\, \Eps_{k+1}(n-1)G_k(n+1)
  - \Eps_{k+2}(n-1)G_{k+1}(n+1)
  + \sum_{m>n} \Eps_{k+2}(m-1)G_{k+2}(m).
\end{equation*}
Corollary~\ref{cor:A5n} shows that $\Eps_{k+1}(n-1)=-2^{\binom{k}{2}}\eps_{n}$, whereas, telescoping,
again, we have
\begin{equation*}
\Big|\sum_{m>n} \Eps_{k+2}(m-1)G_{k+2}(m)\Big|< 2^{\binom{k+1}{2}} G_{k+1}(n+1)=O_k(n^{-k-2})
\end{equation*}
Thus,
\begin{equation*}
\tau_0 - \sigma_n
= \frac{\eps_{n} c_{k}}{n^{k+1}}(1+O_k(1/n))\sim_k\frac{\eps_{n} c_{k}}{n^{k+1}} .
\end{equation*}
Finally, by~\eqref{eq:sign_tau} we have
\begin{equation*}
|\tau_0 - \sigma_n|
> 2^{\binom{k}{2}} G_{k+1}(n+2^{k+1})
> 2^{\binom{k}{2}} G_{k+1}(3n)
> 2^{\binom{k}{2}-2k-2}\frac{(k+1)!}{n^{k+2}}
\end{equation*}
since $n\geq 2^k\geq (k+1)$. Thus, after  a quick computation with Stirling's formula one obtains
\begin{equation*}
\log|\tau_0 - \sigma_n|
> \min_{k\in\R}\Big(\frac{\log 2}{2}(k^2-5k)+\log(k!)-2-k\log n\Big)
= -\frac{(\log n-\log\log n)^2}{\log 4}+O(\log n).
\end{equation*}
Equation~\eqref{eq:asymptau0} then follows since~\eqref{eq:uto} with $n=2^k$ gives
\begin{equation*}
|\tau_0 - \sigma_n|
\leq 2^{-\frac{k^2+3k}{2}+1}(k-1)!
=    e^{-\frac{(\log n-\log\log n)^2}{\log 4} + O(\log n)}.
\qedhere
\end{equation*}
\end{proof}

\bibliographystyle{amsplain}
\providecommand{\MR}{\relax\ifhmode\unskip\space\fi MR }

\end{document}